\newcommand{\A}{{\mathcal{A}}}
\newcommand{\B}{{\mathcal{B}}}
\newcommand{\C}{{\mathcal{C}}}
\newcommand{\D}{{\mathcal{D}}}
\renewcommand{\H}{{\mathcal{H}}}
\newcommand{\K}{{\mathcal{K}}}
\renewcommand{\L}{{\mathcal{L}}}
\newcommand{\M}{{\mathcal{M}}}
\renewcommand{\O}{{\mathcal{O}}}
\newcommand{\T}{{\mathcal{T}}}
\newcommand{\U}{{\mathcal{U}}}
\newcommand{\bT}{\mathbb{T}}
\newcommand{\bbC}{{\mathbb{C}}}
\newcommand{\bbN}{{\mathbb{N}}}
\newcommand{\bbT}{{\mathbb{T}}}
\newcommand{\bbZ}{{\mathbb{Z}}}
\newcommand{\id}{\operatorname{id}}
\newcommand{\lag}{\operatorname{lag}}
\newcommand{\ev}{\operatorname{ev}}
\newcommand{\Aut}{\operatorname{Aut}}
\newtheorem{thm}{Theorem}[section]
	\newtheorem{cor}[thm]{Corollary}
	\newtheorem{lem}[thm]{Lemma}
	\newtheorem{prp}[thm]{Proposition}
	\newtheorem{conj}[thm]{Conjecture}
	\newtheorem{theoremintro}{Theorem}
\theoremstyle{definition}
	\newtheorem{dfn}[thm]{Definition}
	\newtheorem{asmp}[thm]{Assumption}
	\newtheorem{rmk}[thm]{Remark}
\begin{document}
	\title[Equivariant homotopy classification of graph C*-algebras]{Equivariant homotopy classification \\ of graph C*-algebras}
	
	        \author{Boris Bilich}
        \address{Department of Mathematics, University of Haifa, Haifa, Israel, and Department of Mathematics, University of G\"ottingen, G\"ottingen, Germany.}
        \email{bilichboris1999@gmail.com}
        
        \author{Adam Dor-On}
        \address{Department of Mathematics, University of Haifa, Mount Carmel, Haifa 3103301, Israel.}
        \email{adoron.math@gmail.com}
				
				\author{Efren Ruiz}
        \address{Department of Mathematics\\University of Hawaii,
Hilo\\200 W. Kawili St.\\
Hilo, Hawaii\\
96720-4091 USA}
        \email{ruize@hawaii.edu}
        \date{\today}
	\subjclass[2020]{}
	
\subjclass{Primary: 46L55, 46M15 Secondary: 37B10, 46L35, 18N10}

\keywords{Shift equivalence, Williams' problem, aligned shift equivalence homotopy equivalence, bicategory, Cuntz-Krieger, Cuntz-Pimsner, graph C*-algebras, Leavitt path, Hazrat conjecture, classification}
	
	\thanks{B. Bilich was partially supported by a Bloom PhD scholarship at Haifa University. A. Dor-On was partially supported by an NSF-BSF grant no. 2530543 / 2023695 (respectively), a Horizon Marie-Curie SE project no. 101086394 and a DFG Middle-Eastern collaboration project no. 529300231. E. Ruiz was partially supported by the Simons Foundation and by the Mathematisches Forschungsinstitut Oberwolfach.}
        
\begin{abstract}
We show that shift equivalence of essential adjacency matrices coincides with gauge-equivariant homotopy equivalence of their stabilized graph C*-algebras. This provides the first equivalent formulation of shift equivalence of essential matrices in terms of gauge actions on graph C*-algebras. Our proof uses bicategory theory for C*-bimodules developed by Meyer and Sehnem, allowing us to avoid the use of K-theory classification of C*-algebras. 
\end{abstract}
        
\maketitle

\section{Introduction}

Motivated by the decidability problem for conjugacy of subshifts of finite type (SFTs) \cite{KRdeci, KRdecii}, as well as Williams conjugacy problem \cite{Wil73, KRred, KR99}, steady progress has been made in the past four decades in encapsulating various notions of equivalence between SFTs in terms of Cuntz-Krieger graph C*-algebras \cite{CK80, Kri80, Cu81, Mat10, Mat17, ERRS21}. In his seminal work, Williams recaptured conjugacy and eventual conjugacy of SFTs in terms of shift equivalence relations between associated adjacency matrices of directed graphs. These are known as strong shift equivalence (SSE) and shift equivalence (SE), respectively.

\begin{dfn}[Williams]
Let $A$ and $B$ be square matrices with entries in $\mathbb{N}$. $A$ and $B$ are 

\begin{enumerate}
\item \emph{shift equivalent} with lag $m \in \mathbb{N} \setminus \{0\}$ if there are rectangular matrices $R$ and $S$ with entries in $\mathbb{N}$ such that
\begin{align*}
A^m = RS, \ \ B^m = SR, \\
BS = SA, \ \ AR = RB.
\end{align*}

\item \emph{elementary shift related} if they are shift equivalent with lag $1$.

\item \emph{strong shift equivalent} if they are equivalent in the transitive closure of elementary shift relation. 
\end{enumerate}
\end{dfn}

For the purpose of characterizing these shift equivalence relations in terms of Cuntz-Krieger graph C*-algebras, it was noticed early on that the C*-algebras need to be considered together with additional structure involving the canonical gauge action and the canonical diagonal subalgebra \cite{BK00, CR17}. In fact, by a theorem of Krieger \cite{Kri80} we know that the dimension group triples of SFTs are isomorphic if and only if the associated matrices are shift equivalent. Through the use of $K$-theory, this leads to the following corollary.

\begin{cor}[Krieger] \label{c:krieg}
Let $A$ and $B$ be two finite \emph{essential} matrices. If the Cuntz-Krieger graph C*-algebras $\mathcal{O}_A$ and $\mathcal{O}_B$ are stably isomorphic in a way preserving their gauge actions $\gamma^A$ and $\gamma^B$, then $A$ and $B$ are shift equivalent.
\end{cor}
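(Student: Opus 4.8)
The plan is to reduce the statement to Krieger's dimension group theorem \cite{Kri80} by passing to crossed products and taking equivariant $K$-theory. Write $\gamma^A\colon\bbT\to\Aut(\O_A)$ for the gauge action, and suppose $\Psi\colon \O_A\otimes\bbK\to\O_B\otimes\bbK$ is a $*$-isomorphism intertwining $\gamma^A\otimes\id$ and $\gamma^B\otimes\id$. The first step is to pass to crossed products: $\Psi$ induces a $*$-isomorphism
\[
\Psi\rtimes\bbT\colon (\O_A\otimes\bbK)\rtimes_{\gamma^A\otimes\id}\bbT \xrightarrow{\ \sim\ } (\O_B\otimes\bbK)\rtimes_{\gamma^B\otimes\id}\bbT
\]
which intertwines the dual actions $\widehat{\gamma^A}$ and $\widehat{\gamma^B}$ of $\bbZ=\widehat{\bbT}$.

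The second step is to identify these crossed products. A standard computation (Takai duality together with the structure of the Cuntz--Krieger relations) shows that $\O_A\rtimes_{\gamma^A}\bbT$ is stably isomorphic to the fixed-point AF algebra $\F_A=\O_A^{\gamma^A}$, in a way that carries the dual $\bbZ$-action to the automorphism of $K_0(\F_A)$ induced by the canonical shift endomorphism $x\mapsto\sum_e S_e x S_e^*$. Since $A$ is essential, $\bigl(K_0(\F_A),K_0(\F_A)^+\bigr)$ together with this automorphism is precisely Krieger's dimension triple $(\Delta_A,\Delta_A^+,\delta_A)$, where $\Delta_A=\varinjlim\bigl(\bbZ^n\xrightarrow{A}\bbZ^n\xrightarrow{A}\cdots\bigr)$ and $\delta_A$ is the induced shift.

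The third step is to apply $K_0$. Since a $*$-isomorphism of C*-algebras induces an isomorphism of $K_0$ preserving the positive cone, and since $\Psi\rtimes\bbT$ intertwines the dual actions, we obtain an isomorphism of ordered abelian groups $\Delta_A\to\Delta_B$ intertwining $\delta_A$ and $\delta_B$; that is, an isomorphism of Krieger's dimension triples. By Krieger's theorem \cite{Kri80}, $A$ and $B$ are then shift equivalent, which completes the proof.

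The difficulty here is not conceptual — this is a corollary of known results — but one of careful bookkeeping. One must verify that (i) the induced crossed-product isomorphism is genuinely equivariant for the dual $\bbZ$-actions; (ii) the stable isomorphism $\O_A\rtimes_{\gamma^A}\bbT\simeq\F_A$ does carry the dual action to the $K_0$-automorphism coming from the shift endomorphism, which is where essentiality of $A$ enters to guarantee that this endomorphism induces an \emph{automorphism} on $K_0$ and that one recovers the honest dimension triple; and (iii) the order structure and the matrix-versus-transpose conventions are tracked consistently, so that the resulting ordered-group isomorphism is exactly an isomorphism of Krieger's triples rather than only an abstract $K$-theoretic one.
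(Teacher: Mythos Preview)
Your proposal is correct and follows essentially the approach the paper itself indicates. The paper does not give a self-contained proof of this corollary in the introduction---it is stated as a known consequence of Krieger's theorem via $K$-theory---but the closest detailed argument in the paper is Proposition~\ref{p:eq-hom-eq-se}, which proves the stronger homotopy-equivalence version. There the authors package the same ingredients slightly differently: rather than passing directly to the crossed product and then taking $K_0$, they invoke equivariant $K$-theory $K_0^{\bbT}$ as an ordered $\bbZ[x,x^{-1}]$-module (citing \cite{ERS22} and \cite{Phinotes}), identify this module with the triple $(K_0(\O_A\rtimes_{\gamma^A}\bbT),K_0(\O_A\rtimes_{\gamma^A}\bbT)^+,K_0(\widehat{\gamma}^A_1)^{-1})$, and then use \cite[Corollary~7.14]{Raeburn-graph} to match this with the dimension triple $(D_A,D_A^+,d_A)$. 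Your route---induce a $\widehat{\gamma}$-equivariant isomorphism on crossed products, then apply $K_0$---is the same argument stripped of the equivariant $K$-theory language, and is perfectly adequate here since you have an honest equivariant $*$-isomorphism rather than merely a homotopy equivalence. Your caveats (i)--(iii) are the right places to be careful; in particular the paper uses $A^T$ in its inductive-limit description of $D_A$, so your convention remark in (iii) is on point.
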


In the 2000s, the works on Cuntz-Krieger graph C*-algebras inspired a systematic study of their purely algebraic counterparts \cite{AGDP04, AAP05, AMP07} known as \emph{Leavitt path algebras}. A phenomenon of great interest in the study of Leavitt path algebras is that many of the results about them seem to mirror corresponding results for Cuntz-Krieger graph C*-algebras, with very different proof techniques. Simplicity, pure infiniteness, primitivity, and $K_0$-group are examples of such similarly behaving properties and invariants. However, there are still some prominent result for Cuntz-Krieger graph C*-algebras that do not lend themselves available in the purely algebraic setting of Leavitt path algebras (see for instance \cite{BK00, ERRS21}).

For the purpose of discovering new invariants for conjugacy of SFTs, it became important to understand to what extent the converse of Corollary \ref{c:krieg} holds. In the context of Leavitt path algebras, Hazrat \cite{Haz-JA13, Haz13} made an analogous conjecture to the converse of Corollary~\ref{c:krieg}, and these two classification problems have become central in this line of research.

\begin{conj}[Hazrat] \label{c:bk}
Let $A$ and $B$ be two finite essential matrices. The following are equivalent:
\begin{enumerate}
\item[(1)] $A$ and $B$ are SE; 

\item[(2)] the Cuntz-Krieger graph C*-algebras $\mathcal{O}_A$ and $\mathcal{O}_B$ are stably isomorphic in a way preserving their gauge actions $\gamma^A$ and $\gamma^B$; 

\item[(3)] the Leavitt path algebras $L_A$ and $L_B$ are graded Morita equivalent.
\end{enumerate}
\end{conj}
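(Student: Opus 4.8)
The plan is to prove the three-way equivalence by closing the cycle $(2) \Rightarrow (1) \Rightarrow (2)$ and separately bridging $(2) \Leftrightarrow (3)$. The implication $(2) \Rightarrow (1)$ is already supplied by Corollary~\ref{c:krieg}, so the substance of the argument lies in the forward implication $(1) \Rightarrow (2)$ and in translating between the C*-algebraic and the purely algebraic (Leavitt) pictures. Throughout I would work inside the bicategory of C*-correspondences of Meyer and Sehnem, so that the graph C*-algebra together with its gauge action is obtained functorially as the Cuntz-Pimsner algebra of the graph correspondence, and equivariance is built into the construction rather than checked by hand.

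For $(1) \Rightarrow (2)$, to a matrix $A$ one associates the graph correspondence $X_A$ over the commutative algebra of vertices, with $\mathcal{O}_A$ its Cuntz-Pimsner algebra carrying $\gamma^A$. The rectangular matrices $R,S$ witnessing shift equivalence define correspondences $X_R, X_S$ between the two vertex algebras, and the four defining relations $A^m = RS$, $B^m = SR$, $BS = SA$, $AR = RB$ should be repackaged as a coherent system of correspondence isomorphisms $X_R \otimes X_S \cong X_A^{\otimes m}$, $X_S \otimes X_R \cong X_B^{\otimes m}$, $X_B \otimes X_S \cong X_S \otimes X_A$ and $X_A \otimes X_R \cong X_R \otimes X_B$. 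The aim is to assemble this data into an equivalence relating $X_A$ and $X_B$ as objects of the correspondence bicategory, and then to invoke functoriality of the Cuntz-Pimsner assignment, which sends equivalent correspondences to gauge-equivariantly Morita equivalent, and hence in the separable setting gauge-equivariantly stably isomorphic, C*-algebras.

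The hard part is precisely this assembly step, and it is where I expect the main obstacle to sit. Shift equivalence records only the $m$-th powers $A^m, B^m$ and intertwiners between them; it is an inherently \emph{eventual} datum, so extracting from it an honest invertible $1$-cell between $X_A$ and $X_B$ — rather than a zig-zag that becomes invertible only after passing to Cuntz-Pimsner algebras and only up to homotopy — is the crux. Without appealing to K-theoretic classification, I expect one can reliably produce a gauge-equivariant \emph{homotopy} equivalence of the stabilized algebras from this correspondence data, but upgrading homotopic equivariant $*$-homomorphisms to genuinely isomorphic ones, as demanded by (2), requires an additional rigidity input. Locating and supplying that input is the decisive difficulty; it is exactly the gap between the homotopy statement and the full gauge-preserving stable isomorphism.

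For $(2) \Leftrightarrow (3)$, I would exploit the dictionary between $\mathbb{T}$-actions on C*-algebras and $\mathbb{Z}$-gradings on dense $*$-subalgebras: the Leavitt path algebra $L_A$ sits inside $\mathcal{O}_A$ as the canonical dense graded $*$-algebra, its $\mathbb{Z}$-grading recovered from the spectral subspaces of $\gamma^A$. A gauge-equivariant Morita equivalence of the graph C*-algebras should then restrict to a graded Morita context between the Leavitt path algebras, yielding $(2) \Rightarrow (3)$; conversely, a graded Morita equivalence should furnish a graded bimodule whose completion implements a gauge-equivariant imprimitivity bimodule between $\mathcal{O}_A$ and $\mathcal{O}_B$, yielding $(3) \Rightarrow (2)$. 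The technical care here lies in showing that the algebraic graded equivalence completes analytically without losing equivariance, and in matching the graded-idempotent description of Morita equivalence on the algebraic side with full projections and stabilization on the C*-side.
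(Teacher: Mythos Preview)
The statement you are attempting to prove is labeled as a \emph{conjecture} in the paper, not a theorem; the paper does not prove it and explicitly presents it as open. What the paper actually establishes is the weaker Theorem~\ref{thm_intro_SE}: shift equivalence is equivalent to gauge-equivariant \emph{homotopy} equivalence of the stabilized Cuntz--Krieger algebras, not to gauge-equivariant stable isomorphism as in item~(2). The paper is candid that passing from homotopy equivalence to genuine isomorphism is the remaining obstacle, and hopes its bicategorical techniques may eventually help.

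Your plan in fact tracks the paper's method quite closely for the homotopy version: packaging $R,S$ as correspondences, pushing through the Meyer--Sehnem homomorphism $\mathcal{O}^0$, and reading off equivariant maps between crossed products is exactly what Sections~\ref{s:bicat-se}--\ref{s:main} do. You even put your finger on the precise failure point --- ``upgrading homotopic equivariant $*$-homomorphisms to genuinely isomorphic ones \ldots\ requires an additional rigidity input'' --- but you do not supply that input, and neither does the paper. So the gap in your $(1)\Rightarrow(2)$ is real and is exactly the gap that makes the statement a conjecture rather than a theorem. The paper does prove that \emph{aligned} shift equivalence of $X(A)$ and $X(B)$ suffices for equivariant stable isomorphism (the final theorem of Section~\ref{s:main}), but it is not known whether ordinary SE implies aligned SE.

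For $(2)\Leftrightarrow(3)$ your sketch is heuristic at best: the claim that a gauge-equivariant Morita equivalence of the C*-algebras ``restricts'' to a graded Morita equivalence of the Leavitt path algebras, and conversely that a graded algebraic bimodule ``completes'' to an equivariant imprimitivity bimodule, are themselves open problems in this generality. The paper does not address item~(3) at all; Hazrat's algebraic conjecture remains open and there is no known general mechanism of the sort you describe.
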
 

A partial converse to Corollary \ref{c:krieg} was obtained by Bratteli and Kishimoto for primitive matrices \cite{BK00} by proving a non-commutative, topological analogue of Rokhlin property from ergodic theory. This is not surprising since such Rokhlin properties have been used for obtaining classification results of single automorphisms of various classes of C*-algebras \cite{HJ82, HO84, Kis95, Kis96, EK97}. However, in the purely algebraic setting, Rokhlin property and its consequences are not expected to hold, and an entirely new approach is required.

In this paper, we use the theory of bicategories developed by Meyer et. al. \cite{BMZ13, AM16}, as well as the Cuntz-Pimsner homomorphism $\mathcal{O}^0$ developed by Meyer and Sehnem \cite{MS19}, to show that shift equivalence of two essential matrices $A$ and $B$ coincides with equivariant homotopy equivalence of their stabilized Cuntz-Krieger graph C*-algebras $\mathcal{O}_A \otimes \mathbb{K}$ and $\mathcal{O}_B \otimes \mathbb{K}$. More precisely, let $A$ and $B$ be square matrices with entries in $\mathbb{N}$ indexed by $V$ and $W$ respectively. We will say that $\mathcal{O}_A$ and $\mathcal{O}_B$ are stably equivariantly \emph{homotopy equivalent} if there are gauge-preserving $*$-homomorphisms $\tau : \mathcal{O}_A \otimes \mathbb{K} \rightarrow \mathcal{O}_B \otimes \mathbb{K}$ and $\rho : \mathcal{O}_B \otimes \mathbb{K} \rightarrow \mathcal{O}_A \otimes \mathbb{K}$ such that $\tau \circ \rho$ and $\rho \circ \tau$ are homotopic to the respective identity automorphisms in a way preserving the gauge actions.

\begin{theoremintro}\label{thm_intro_SE}
Let $A$ and $B$ be essential square matrices with entries in $\mathbb{N}$. Then $A$ and $B$ are shift equivalent if and only if $\mathcal{O}_A$ and $\mathcal{O}_B$ are stably equivariantly homotopy equivalent.
\end{theoremintro}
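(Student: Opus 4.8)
The plan is to prove the two implications by rather different means. The forward implication (shift equivalence $\Rightarrow$ equivariant homotopy equivalence) is the substantial one, and I would obtain it by transporting a bicategorical ``shift equivalence'' of the graph correspondences through the Cuntz--Pimsner functor $\mathcal{O}^0$ of Meyer--Sehnem; the converse is a soft consequence of Krieger's theorem together with the homotopy invariance of $K$-theory.

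For the forward implication, write $V,W$ for the (finite) index sets of $A,B$, let $X_A$ be the graph C*-correspondence of $A$ over $\mathbb{C}^{V}$ and $X_B$ that of $B$ over $\mathbb{C}^{W}$, so that $\mathcal{O}_{X_A}=\mathcal{O}_A$ with its gauge action, and likewise for $B$. Given a shift equivalence $(R,S,m)$, I would form the graph correspondences $X_R\colon\mathbb{C}^{V}\to\mathbb{C}^{W}$ and $X_S\colon\mathbb{C}^{W}\to\mathbb{C}^{V}$ and use the four matrix identities to produce unitary correspondence isomorphisms
\begin{align*}
X_A\otimes X_R\cong X_R\otimes X_B,&\qquad X_B\otimes X_S\cong X_S\otimes X_A,\\
X_R\otimes X_S\cong X_A^{\otimes m},&\qquad X_S\otimes X_R\cong X_B^{\otimes m}.
\end{align*}
The essentiality hypothesis (after, if necessary, replacing $m$ by a larger lag, which is harmless) is what guarantees that all of these correspondences are full, proper and with injective left action, so that they are genuine $1$-morphisms of the C*-correspondence bicategory $\mathfrak{Cor}$; it is also what should let me arrange the coherence identities relating the four isomorphisms above, i.e.\ upgrade $(R,S,m)$ to an \emph{aligned shift equivalence} of $X_A$ and $X_B$ in $\mathfrak{Cor}$. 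Feeding this into $\mathcal{O}^0$, which is a homomorphism of bicategories sending $(\mathbb{C}^{V},X_A)$ to $\mathcal{O}_A$ and is canonically compatible with the gauge actions, I obtain gauge-preserving morphisms $\tau=\mathcal{O}^0(X_R)$ and $\rho=\mathcal{O}^0(X_S)$ which, by properness and fullness, can be realised as honest $*$-homomorphisms between $\mathcal{O}_A\otimes\mathbb{K}$ and $\mathcal{O}_B\otimes\mathbb{K}$ and satisfy $\rho\circ\tau=\mathcal{O}^0(X_A^{\otimes m},u_A)$, $\tau\circ\rho=\mathcal{O}^0(X_B^{\otimes m},u_B)$, where $X_A^{\otimes m}$ is viewed as a self-$1$-morphism of $(\mathbb{C}^{V},X_A)$ and $u_A,u_B$ are the intertwining $2$-isomorphisms produced by the alignment.

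The heart of the matter, and the step I expect to be the main obstacle, is to show that $\mathcal{O}^0(X_A^{\otimes m},u_A)$ is homotopic, through gauge-preserving $*$-homomorphisms, to $\mathrm{id}_{\mathcal{O}_A\otimes\mathbb{K}}$ (and similarly for $B$). One first checks, using only the Pimsner relations $S_e^{*}S_f=\langle e,f\rangle$ and the covariance relations of $\mathcal{O}_A$, that $\mathcal{O}^0$ of the self-$1$-morphism $X_A^{\otimes m}$ equipped with the \emph{identity} $2$-isomorphism is the identity $1$-morphism, up to the canonical identification of $X_A^{\otimes m}\otimes_{\mathbb{C}^{V}}\mathcal{O}_A$ with $\mathcal{O}_A$. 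The twist $u_A$ is a unitary bimodule automorphism of the finite-dimensional Hilbert $\mathbb{C}^{V}$-bimodule $X_A^{\otimes(m+1)}$, and since the unitary group of bimodule endomorphisms of such a module is connected, $u_A$ can be joined to the identity by a norm-continuous path $(u_A^{t})_{t\in[0,1]}$ of bimodule automorphisms. This path is a $1$-morphism $(X_A^{\otimes m},u_A^{\bullet})$ in $\mathfrak{Cor}$ over the cylinder $\mathbb{C}^{V}\otimes C[0,1]$ with endpoints $(X_A^{\otimes m},u_A)$ and $(X_A^{\otimes m},\mathrm{id})$; applying $\mathcal{O}^0$ and using that it commutes with tensoring by $C[0,1]$ and with point evaluations yields the desired gauge-preserving homotopy, the gauge-compatibility being automatic since the gauge acts on $X_A^{\otimes(m+1)}$ by the scalar $z^{m+1}$ and hence commutes with every $u_A^{t}$. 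The delicate points to nail down are the continuity/naturality of Meyer--Sehnem's functor on such one-parameter families and the bookkeeping showing that the alignment exhibits $u_A$ as (the transport of) an automorphism of a \emph{fixed} bimodule; I would try to package this as a general lemma — an aligned shift equivalence between two objects of a bicategory equipped with path objects makes them homotopy equivalent — and then instantiate it in $\mathfrak{Cor}$ and push it through $\mathcal{O}^0$, where the resulting notion of homotopy equivalence of objects is exactly gauge-preserving stable homotopy equivalence of the Cuntz--Pimsner algebras.

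For the converse, suppose $\tau\colon\mathcal{O}_A\otimes\mathbb{K}\to\mathcal{O}_B\otimes\mathbb{K}$ and $\rho$ the other way are gauge-preserving with $\rho\circ\tau$ and $\tau\circ\rho$ gauge-equivariantly homotopic to the identities. Taking crossed products by the gauge circle actions (equivalently, passing to the fixed-point algebras together with their residual shift) turns this into a $\widehat{\gamma}$-equivariant stable homotopy equivalence of $\mathcal{O}_A\rtimes_{\gamma^A}\mathbb{T}$ and $\mathcal{O}_B\rtimes_{\gamma^B}\mathbb{T}$; applying $K_0$, which is homotopy invariant, gives an isomorphism $K_0\big((\mathcal{O}_A\rtimes\mathbb{T})\otimes\mathbb{K}\big)\cong K_0\big((\mathcal{O}_B\rtimes\mathbb{T})\otimes\mathbb{K}\big)$ intertwining the automorphisms induced by $\widehat{\gamma}^A,\widehat{\gamma}^B$ and, since $\tau_{*}$ and $\rho_{*}$ are mutually inverse while each carries $K_0^{+}$ into $K_0^{+}$, an order isomorphism. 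This is precisely an isomorphism of Krieger's dimension group triples of $A$ and $B$, so Krieger's theorem (in the form underlying Corollary~\ref{c:krieg}) yields that $A$ and $B$ are shift equivalent; note that only homotopy invariance of $K$-theory, and no classification theorem, is used here, exactly as for Corollary~\ref{c:krieg}. Combining the two implications completes the proof.
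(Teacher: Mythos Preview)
Your overall architecture matches the paper's, and your converse via crossed products and Krieger's theorem is essentially the paper's Proposition~\ref{p:eq-hom-eq-se}. However, there is a genuine confusion in the forward direction that the paper is careful to avoid. You assert that essentiality lets you ``arrange the coherence identities'' and thereby upgrade the matrix shift equivalence to an \emph{aligned} shift equivalence of $X_A,X_B$. This is precisely what is \emph{not} known: aligned shift equivalence of graph correspondences sits somewhere between SSE and SE, and the paper explicitly leaves its position open; moreover, if you really had alignment you would obtain an equivariant $*$-isomorphism outright (this is the paper's final theorem), not merely a homotopy equivalence. What SE actually gives you is $1$-arrows $[X_R,\Phi_R]$, $[X_S,\Phi_S]$ whose composites are $2$-isomorphic to $[X_A^{\otimes m},u_A]$ and $[X_B^{\otimes m},u_B]$ for \emph{some} twists $u_A,u_B$ that need not be the identity. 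Your ``heart of the matter'' paragraph then correctly homotopes these to the identity via path-connectedness of $\mathcal{U}(X_A^{\otimes(m+1)})\cong\prod_{i,j}\mathcal{U}(\mathbb{C}^{(A^{m+1})_{ij}})$; the paper packages exactly this as \emph{homotopy shift equivalence} (Proposition~\ref{p:se=hse}) and then shows $\mathcal{O}^0$ preserves it (Proposition~\ref{prp:funct-homot}). So your argument survives once you drop the alignment claim and recognise that the path-connectedness step is the \emph{replacement} for alignment, not a consequence of it.

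The second point you underplay is the passage from $1$-arrows in $\mathfrak{C}^{\mathbb{N}}_{\mathrm{pr},*}$ to honest gauge-preserving $*$-homomorphisms. The output of $\mathcal{O}^0$ is a $1$-arrow between pairs $(\mathcal{O}^0_X,\mathcal{O}^1_X)$, not a $*$-homomorphism between $\mathcal{O}_A\otimes\mathbb{K}$ and $\mathcal{O}_B\otimes\mathbb{K}$; the paper devotes all of Section~\ref{s:cor-hom} to this translation. After stabilising, Brown--Kasparov lets one write every proper full correspondence as ${}_f\mathcal{B}$ for a non-degenerate $*$-homomorphism $f$, reinterpret $1$-arrows as pairs $(f,u)$ with $u$ a unitary multiplier, and then build an equivariant $*$-homomorphism $f_{\rtimes u}\colon\mathcal{O}^0_{X_A}\rtimes_\alpha\mathbb{Z}\to\mathcal{O}^0_{X_B}\rtimes_\beta\mathbb{Z}$ between the crossed products, which are finally identified equivariantly with $\mathcal{O}_A\otimes\mathbb{K}$ and $\mathcal{O}_B\otimes\mathbb{K}$. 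Your sentence ``by properness and fullness, can be realised as honest $*$-homomorphisms'' is correct in spirit but hides this nontrivial layer; in particular it is the crossed-product picture that makes gauge-equivariance transparent and that lets you conclude, from homotopy shift equivalence at the $(\mathcal{O}^0,\mathcal{O}^1)$ level, that $g_{\rtimes v}\circ f_{\rtimes u}$ and $f_{\rtimes u}\circ g_{\rtimes v}$ are each equivariantly homotopic to a $*$-automorphism, whence $f_{\rtimes u}$ is an equivariant homotopy equivalence.
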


Homotopy equivalence in the context of Leavitt path algebras was first considered by Corti\~nas and Montero \cite{CM20, CM23} as an intermediary step for achieving classification up to isomorphism, analogously to known results for Cuntz-Krieger graph C*-algebras (see \cite{ERRS21}). Recently, Arnone \cite{Arn+} was able to prove a formidable homotopy analogue of Hazrat's conjecture under the assumption that $A$ and $B$ are primitive. Although this result is an important intermediary step in resolving the algebraic Hazrat conjecture in the case of primitive matrices, the definition of homotopy equivalence in the purely algebraic setting is complicated by the lack of an underlying topology on general Leavitt path algebras. Moreover, the proofs \cite{CM20, CM23} and \cite{BK00, Arn+} all rely on deep $K$-theory classification techniques, which we are able to circumvent through the use of a direct bicategorical approach.

In general, homotopy is too weak of a notion to conclude isomorphism. However, for simple Cuntz-Krieger graph C*-algebras the Kirchberg-Phillips classification of purely infinite simple C*-algebras \cite{NCP00} assures us that, when the gauge action is ignored, homotopy equivalence implies the existence of a $*$-isomorphism. We are hopeful that with the novelty of avoiding such deep classification theorems, and by combining ideas from bicategory theory and homotopy theory, our techniques will prove useful in resolving Hazrat's Conjecture in full.

This paper contains six sections, including this introductory section. In Section \ref{sec_prelim} we provide some of the necessary basic theory to be used in our paper along with relevant references. In Section \ref{s:bicat-se} we discuss Meyer's bicategory of C*-correspondences, as well as the Meyer--Sehnem Cuntz--Pimsner homomorphism, showcasing the advantage of our approach. In Section \ref{s:homot-se} we define homotopy shift equivalence in the bicategory of C*-correspondences. This is where we show that shift equivalence implies that associated graph C*-correspondences are homotopy shift equivalent, and that homotopy shift equivalence is preserved under the Meyer--Sehnem Cuntz--Pimsner homomorphism. In Section \ref{s:cor-hom} we explain how we may consider certain C*-correspondences as $*$-homomorphisms, where tensor product corresponds to composition. We analyze this behavior under taking crossed products, and show that taking homotopy equivalence is preserved under taking crossed products in an appropriate sense. Finally, in Section \ref{s:main} we prove our main result, Theorem \ref{thm_intro_SE}, and provide sufficient conditions for the existence of an equivariant $*$-isomorphism between Cuntz-Krieger graph C*-algebras.
 
\section{Preliminary}\label{sec_prelim}

In this section we recall some of the necessary theory. We recommend \cite{Raeburn-graph} for the basic theory of graph C*-algebras, and \cite{Lan95} for the theory of C*-correspondences. 

Let $\A$ be a C*-algebra. A right Hilbert $\A$-module $X$ is a linear space with a right action of the C*-algebra $\A$ and an $\A$-valued inner product $\langle \cdot, \cdot \rangle$ such that $X$ is complete with respect to the induced norm $\| \xi \|:= \| \langle \xi,\xi \rangle \|^{1/2}$ for $\xi \in X$.

For a Hilbert $\A$-module $X$ we denote by $\L(X)$ the C*-algebra of all adjointable operators on $X$. For $\xi,\eta \in X$, the rank-one operator $\theta_{\xi,\eta} \in \L(X)$ is defined by $\theta_{\xi,\eta}(\zeta) = \xi \langle \eta,\zeta \rangle$ for $\zeta \in X$. Then, the ideal of \emph{generalized compact operators} $\K(X)$ of $\L(X)$ is the closed linear span of elements of the form $\theta_{\xi,\eta}$ for $\xi,\eta \in X$.

Now let $\B$ be another C*-algebra. An \emph{$\A-\B$ correspondence} is a right Hilbert $\B$-module $X$ together with a $*$-representation $\phi_X : \A \rightarrow \L(X)$.

\begin{asmp}
We assume throughout the paper that correspondences are non-degenerate in the sense that $\phi_X(\A)X = X$. 
\end{asmp}

When we have an $\A-\B$ correspondence, we will often treat it as an $\A-\B$ bimodule. We say the $\A-\B$ correspondence $X$ is
\begin{enumerate}
\item \emph{full} if $\overline{\mathrm{span}} \{ \langle \xi, \eta \rangle : \xi, \eta \in X \} = \B$;
\item \emph{injective} if $\phi_X$ is an injection;

\item \emph{proper} if $\phi_X$ has image in $\K(X)$; and

\item \emph{regular} if it is both injective and proper.

\end{enumerate}

We will say that two $\A-\B$ correspondences $X$ and $Y$ are \emph{unitarily isomorphic} if there is an isometric bimodule surjection $\tau: X \rightarrow Y$, and call $\tau$ a \emph{unitary correspondence isomorphism}. Due to the polarization identity, a surjective bimodule map $\tau$ is a correspondence isomorphism if and only if for $\xi,\eta \in X$ we have
$$
\langle \tau(\xi),\tau(\eta) \rangle = \langle \xi,\eta \rangle.
$$ 

\begin{dfn}
Let $X$ be an $\A-\A$ correspondence. A representation of $X$ on a Hilbert space $\H$ is a pair $(\pi, t)$ where $\pi \colon \A\to \mathbb{B}(\H)$ is a non-degenerate $*$-homomorphism and $t\colon X\to \mathbb{B}(\H)$ is a bounded linear operator such that
\begin{enumerate}
\item $t(\xi)^*t(\eta) = \pi(\langle \xi, \eta\rangle)$;
\item $\pi(a)t(\xi) = t(a \cdot \xi)$
\end{enumerate}
for all $\xi,\eta\in X$ and $a\in \A$. 
\end{dfn}

It follows automatically that $t(\xi)\pi(a) = t(\xi a)$ for all $\xi \in X$ and $a\in \A$. The \emph{Toeplitz algebra} $\T_X$ of $X$ is the universal C$^*$-algebra with respect to representations of $X$. The algebra $\T_X$ possess a canonical point-norm continuous gauge action $\alpha : \bT \rightarrow \Aut(\T_X)$ which fixes $A$ and acts by multiplication by a unimodular scalar on $X$. More precisely, let $(\pi,t)$ be a representation such hat $\T_X \cong C^*(\pi,t)$, then $\alpha_z(t(\xi)) = z\cdot t(\xi)$ and $\alpha_z(\pi(a)) = \pi(a)$ for all $z\in \bT$, $a\in \A$ and $\xi\in X$.

When we are given a representation $(\pi,t)$ of $X$, we have an induced $*$-homomorphism $\psi\colon \K(X)\hookrightarrow \T_X$ which is determined by the property $\psi(\theta_{\xi, \eta}) = \xi\otimes \eta^*$. For our purposes, it will be important to consider Cuntz-Pimsner C*-algebras when the involved C*-correspondence $X$ is \emph{regular}. In this case, the \emph{Cuntz-Pimsner algebra} $\O_X$ of $X$ is the quotient of $\T_X$ by the ideal generated by differences $a-\psi(\varphi_X(a))$ for $a\in A$. Representations of $X$ whose induced $*$-representation defined on $\T_X$ factors through $\O_X$ are called \emph{covariant}. Moreover, since the ideal generated by $a-\psi(\varphi_X(a))$ for $a\in A$ is gauge invariant, we obtain an induced gauge action $\gamma$ of $\T_X$ on $\mathcal{O}_X$ which fixes $\mathcal{A}$ and acts by multiplication by the unimodular scalar on $X$. This action gives rise to a topological $\mathbb{Z}$-grading of $\mathcal{O}_X$ with graded subspaces given by
$$
\mathcal{O}_X^n := \{ \ T \in \mathcal{O}_X \ | \ \gamma_z(T) = z^n\cdot T \ \}.
$$

For an $\A-\B$ correspondence $X$ and a $\B-\C$ correspondence $Y$, we may define the interior tensor product $\A -\C$ correspondence $X \otimes_{\B} Y$ as follows. Let $X \odot_{\B} Y$ be the quotient of the algebraic tensor product by the subspace generated by elements of the form
$$
\xi b \otimes \eta - \xi \otimes b \eta, \ \ \text{for} \ \ \xi \in X, \ \eta \in Y, \ b \in \B.
$$
We define a $\C$-valued inner product and a left $\A$ action by setting
$$
\langle \xi \otimes \eta, \xi' \otimes \eta' \rangle = \langle \eta , \langle \xi ,\xi' \rangle \eta' \rangle, \ \ \text{for} \ \ \xi,\xi' \in X, \ \ \eta,\eta' \in Y
$$
$$
a \cdot (\xi \otimes \eta) = (a\xi) \otimes \eta, \ \ \text{for} \ \ \xi \in X, \ \ \eta \in Y, \ \ a \in \A,
$$
and denote by $X \otimes_{\B} Y$ the separated completion of $X \odot_{\B} Y$ with respect to the $\C$-valued semi-inner product above. It then follows that $X\otimes_{\B} Y$ is an $\A-\C$ correspondence. 

\begin{dfn}
Let $\A$ and $\B$ be C*-algebras.  A \emph{Hilbert $\A-\B$ bimodule} $X$ is a complex vector space which is both a right Hilbert $\B$-module and a left Hilbert $\A$-module, such that 
\begin{enumerate}
\item The left action of $\B$ is by adjointables on the right Hilbert $\A$-module structure, and the right action of $\A$ is by adjointables on the left Hilbert $\B$-module structure.

\item for all $x, y, z \in X$, 
$$
_\A \langle x, y \rangle \cdot z = x \cdot \langle y, z \rangle_\B.
$$
\end{enumerate}
If additionally we have that the right and left Hilbert C*-module structures on $X$ are full, we will say that $X$ is an \emph{imprimitivity $\A-\B$ bimodule}
\end{dfn}

The following lemma connects $\A-\B$ correspondences and imprimitivity $\A-\B$-bimodules, and is a well-known consequence of \cite[Proposition~3.8]{RW-Morita-Eq}.

\begin{lem}\label{lem-iso-impri}
Let $X$ be a $\A-\B$-correspondence such that $X$ is a full right Hilbert $B$-module.  Assume the left action $\phi_{X} \colon \A \to \L(X)$ is injective such that $\phi_X(\A)=\K(X)$.  Then $X$ is an $\A-\B$ imprimitivity bimodule with left inner product given by $_\A \langle \xi, \xi' \rangle = \phi_X^{-1}(\theta_{\xi, \xi'})$.
\end{lem}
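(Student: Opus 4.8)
The plan is to transport the Hilbert module structure from $\K(X)$ to $\A$ along $\phi_X$. Since $\phi_X \colon \A \to \L(X)$ is injective and its image is the ideal $\K(X) \subseteq \L(X)$, the corestriction $\phi_X \colon \A \to \K(X)$ is a $*$-isomorphism; in particular it is isometric, order-preserving, and its inverse $\phi_X^{-1} \colon \K(X) \to \A$ is again an isometric $*$-isomorphism. Because $\theta_{\xi,\eta} \in \K(X)$ for all $\xi,\eta \in X$, the formula $_\A\langle \xi,\eta\rangle := \phi_X^{-1}(\theta_{\xi,\eta})$ is well defined, and I will check that it equips $X$ with a left Hilbert $\A$-module structure satisfying the imprimitivity bimodule axioms together with the given right Hilbert $\B$-module structure.

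First I would record the standard identities for rank-one operators, each an immediate computation against a test vector $\zeta \in X$ using $\theta_{\xi,\eta}(\zeta) = \xi\langle \eta,\zeta\rangle$: one has $\theta_{\xi,\eta}^* = \theta_{\eta,\xi}$, $\theta_{\xi+\xi',\eta} = \theta_{\xi,\eta}+\theta_{\xi',\eta}$, $\theta_{\phi_X(a)\xi,\eta} = \phi_X(a)\theta_{\xi,\eta}$, $\theta_{\xi b,\eta} = \theta_{\xi,\eta b^*}$, and $\|\theta_{\xi,\xi}\| = \|\xi\|^2$. Applying the $*$-isomorphism $\phi_X^{-1}$ then yields at once that $_\A\langle\cdot,\cdot\rangle$ is additive and left $\A$-linear in the first variable, that $_\A\langle\xi,\eta\rangle^* = {}_\A\langle\eta,\xi\rangle$, that $_\A\langle\xi,\xi\rangle = \phi_X^{-1}(\theta_{\xi,\xi}) \ge 0$ with $_\A\langle\xi,\xi\rangle = 0$ iff $\theta_{\xi,\xi} = 0$ iff $\xi = 0$, and that the induced norm $\|{}_\A\langle\xi,\xi\rangle\|^{1/2} = \|\theta_{\xi,\xi}\|^{1/2} = \|\xi\|$ coincides with the original norm, so $X$ is complete for it. Thus $X$ is a left Hilbert $\A$-module. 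The compatibility condition is immediate: $_\A\langle\xi,\eta\rangle\cdot\zeta = \phi_X(\phi_X^{-1}(\theta_{\xi,\eta}))\zeta = \theta_{\xi,\eta}(\zeta) = \xi\cdot\langle\eta,\zeta\rangle_\B$ for all $\xi,\eta,\zeta \in X$. The adjointability requirements in the definition of a Hilbert bimodule follow from the same identities: each $\phi_X(a)$ is adjointable for the right structure by hypothesis, while $\theta_{\xi b,\eta} = \theta_{\xi,\eta b^*}$ shows that right multiplication by $b \in \B$ is adjointable for the left structure with adjoint right multiplication by $b^*$.

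It remains to check fullness of both inner products. The right Hilbert $\B$-module $X$ is full by hypothesis. For the left inner product, $\overline{\operatorname{span}}\{{}_\A\langle\xi,\eta\rangle : \xi,\eta\in X\} = \phi_X^{-1}\bigl(\overline{\operatorname{span}}\{\theta_{\xi,\eta} : \xi,\eta \in X\}\bigr) = \phi_X^{-1}(\K(X)) = \A$, using that $\phi_X^{-1}$ is an isometric $*$-isomorphism and that $\K(X)$ is by definition the closed linear span of the rank-one operators. Hence $X$ is an imprimitivity $\A-\B$ bimodule; alternatively, once the left inner product has been identified, one may simply invoke \cite[Proposition~3.8]{RW-Morita-Eq}. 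There is no genuine obstacle here: the only point requiring a moment of care is that $\phi_X$ corestricts to a $*$-isomorphism onto the ideal $\K(X)$ — which is what makes positivity, definiteness, and norms transfer correctly — and after that every verification reduces to a one-line computation with rank-one operators.
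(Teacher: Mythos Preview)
Your proof is correct. The paper's own argument is a two-line appeal to \cite[Proposition~3.8]{RW-Morita-Eq}, which already gives $X$ the structure of a $\K(X)$--$\B$ imprimitivity bimodule with left inner product $\theta_{\xi,\xi'}$, and then simply transports this to $\A$ along the $*$-isomorphism $\phi_X$. Your argument follows exactly the same strategy---transport along $\phi_X$---but you unpack the content of that reference and verify all the Hilbert bimodule axioms directly from the rank-one identities. You even note this alternative at the end. So the two proofs are not genuinely different in approach; yours is simply the self-contained version, trading brevity for not requiring the reader to consult Raeburn--Williams.
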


\begin{proof}
By \cite[Proposition~3.8]{RW-Morita-Eq}, $X$ is a $\K(X)-B$ imprimitivity bimodule with left inner product given by $_{\K(X)}\langle \xi , \xi' \rangle= \theta_{\xi, \xi'}$.  Since $\phi_X \colon A \to \K(X)$ is an isomorphism, $_A \langle \xi, \xi' \rangle := \phi_X^{-1} ( \theta_{\xi, \xi' } )$ is a left inner product which makes $X$ an $A-B$ imprimitivity bimodule.
\end{proof}

A directed graph $G=(V,E,s,r)$ is composed of a set of vertices $V$ and a set of edges $E$, along with source and range maps $s,r: E \rightarrow V$. We say that $G$ is finite if $V$ and $E$ are finite. Following \cite{CDE24}, for a $V \times W$ matrix $F = [F_{ij}]$ with entries in $\mathbb{N}$, we denote 
$$
E_F := \{ (v, \alpha, w) \ \mid \  0 \leq \alpha < F_{vw}, v \in V, w \in W\}
$$ 
so that $r(v, \alpha, w) = w$ and $s(v, \alpha, w) = v$, and $\alpha \in \mathbb{N}$. When $V = W$, this makes $G_F := (V, E_F, r, s )$ into a directed graph in its own right. 

One of our main examples of C*-correspondences arise from matrices as above. Let $V$ and $W$ be finite, and denote by $\A = c(V)$ and $\B = c(W)$ be the finite dimensional abelian C*-algebras consisting of functions on $V$ and $W$ respectively. Let $R$ be a $V\times W$ matrix with entries in $\bbN$. We define an $\A-\B$ correspondence $X(R)$ from $E_R$ as follows.
Let $X(R)$ be the space of functions from $E_R$ to $\bbC$.
The bimodule structure on $X(R)$ is defined by
\[
  (a\cdot \xi \cdot b)(e) = a(r(e))\xi(e) b(s(e))
\]
for all $a\in \A$, $b\in \B$, $\xi \in X(R)$, and $e\in E_R$.
The $\B$-valued inner product on $X(R)$ is given by
\[
  \langle \xi, \eta\rangle(w) = \sum_{s(e)=w}\overline{\xi(e)}\eta(e)
\]
for all $\xi,\eta\in X(R)$ and $w\in W$. With this $c(W)$-valued inner product and $c(V)$ left action, $X(R)$ becomes a (finite-dimensional) $\A-\B$ correspondence.
When the matrix $R$ is essential, the correspondence $X(R)$ is both full and regular. If $A$ is a $V\times V$ matrix with entries in $\bbN$, we say that $X(A)$ is the \emph{graph correspondence} of $G_A$. 

In this paper we will conduct our study through the use of graph C*-algebras, which include the class of Cuntz-Krieger C*-algebras (where the adjacency matrices can only have values in $\{0,1\}$). This class of C*-algebras forms a robust class of Cuntz-Pimsner algebras. For more on the subject, we recommend \cite{FLR00, Raeburn-graph}. 

\begin{dfn}
Let $G = (V, E, s,r)$ be a finite directed graph, which we assume has no sinks and no sources. A Cuntz-Krieger $G$-family $\{P_v,S_e \}_{v\in V, e\in E}$ on a Hilbert space $\H$ consists of mutually orthogonal projections $\{P_v\}_{v\in V}$ and partial isometries $\{S_e\}_{e\in E}$ on $\H$ such that
\begin{enumerate}
\item $S_e^*S_e = P_{s(e)}$ for all edges $e\in E$;
\item $P_v = \sum_{e\in E, s(e)=v} S_eS_e^*$.
\end{enumerate}
The \emph{the graph C*-algebra} of $G$ is the C*-algebra generated by a universal Cuntz-Krieger $G$-family.
\end{dfn}

More precisely, a Cuntz-Krieger family $\{Q_v,T_e\}_{v\in V, \ e\in E}$ is universal if for any Cuntz-Krieger family $\{P_v,S_e \}_{v\in V, e\in E}$ there is a surjective $*$-homomorphism $C^*(\{Q_v,T_e\}) \rightarrow C^*(\{P_v,S_e\})$ satisfying $Q_v \mapsto P_v$ and $T_e \mapsto S_e$. We then denote by $C^*(G):= C^*(\{Q_v,T_e\})$ the graph C*-algebra of $G$, which is uniquely defined up to generator-preserving $*$-isomorphism.

It turns out, that graph C*-algebras are exactly the Cuntz-Pimsner algebras of the \emph{graph correspondence} of the adjacency matrix of the graph. That is, $C^*(G) \cong \mathcal{O}_{X(A_G)}$ where $A_G$ is the adjacency matrix of $G$. Moreover, through this identification the usual gauge action $\gamma$ on $\mathcal{O}_{X(A_G)}$ is given on generators by $\gamma_z(Q_v) = Q_v$ and $\gamma_z(T_e) = z \cdot T_e$ for $v\in V$ and $e\in E$.

\section{A bicategorical approach to shift equivalence} \label{s:bicat-se}

We introduce the C*-bimodule analog of aligned equivalence of matrices, which appeared first as the conclusion of \cite[Lemma 4.2]{CDE24}. The purely algebraic, bimodule version of aligned shift equivalence was extensively studied in \cite{ART23}. We will interpret aligned shift equivalence of C*-bimodules using Meyer's bicategory of C*-correspondences (see \cite[Subsection 2.2]{BMZ13} and \cite[Appendix A]{MS19}). By \cite[Proposition~3.5]{CDE24}, two matrices $A$ and $B$ with entries in $\mathbb{N}$ are shift equivalent with lag $m$ if and only if $X(A)$ and $X(B)$ are shift equivalent with lag $m$ in the sense that there exists a concrete shift between $X(A)$ and $X(B)$ with lag $m$ as in the following definition.

\begin{dfn}\label{dfn_mase}
Let $X$ be an $\mathcal{A}$-correspondence and $Y$ be a $\mathcal{B}$-correspondence. We say that the tuple $(M, N, \Phi_M, \Phi_N, \Psi_X, \Psi_Y)$ is a \emph{concrete shift between $X$ and $Y$ with lag $m \in \mathbb{N}\setminus \{0\}$} if $M$ is an $\A-\B$-correspondence, $N$ is a $\B-\A$-correspondence, and each map 
\begin{align*}
\Phi_M &\colon X \otimes_{\mathcal{A} } M  \to M \otimes_{\mathcal{B}} Y  & \Phi_N &\colon Y \otimes_{\mathcal{B} } N  \to N \otimes_{\mathcal{A}} X \\
\Psi_X &\colon M \otimes_{\mathcal{B}} N \to X^{\otimes m} & \Psi_Y &\colon N \otimes_{\mathcal{A}} M \to Y^{\otimes m} 
\end{align*}
are unitary correspondence isomorphisms. We say $(M, N, \Phi_M, \Phi_N, \Psi_X, \Psi_Y)$ is \emph{aligned} if additionally
\begin{align*}
     (\Psi_X \otimes \id_X)(\id_M \otimes \Phi_N)(\Phi_M \otimes \id_N) &= (\id_X \otimes \Psi_X) \\
    (\Psi_Y \otimes \id_Y)(\id_N \otimes \Phi_M)(\Phi_N \otimes \id_M) &= (\id_Y \otimes \Psi_Y).
\end{align*}
We say that $X$ and $Y$ are \emph{aligned} shift related with $\lag$ $m$ if there exists a concrete shift between $X$ and $Y$ with $\lag$ $m$ which is aligned.
\end{dfn}

Note that we are suppressing the associativity isomorphism $(X_1 \otimes_\C X_2) \otimes_{\D} X_3 \cong X_1 \otimes_\C (X_2 \otimes_\D X_3)$ given by $(x_1 \otimes x_2) \otimes x_3 \mapsto x_1 \otimes (x_2 \otimes x_3)$ to simplify the notation.

It turns out that the bicategorical approach developed by Meyer is especially useful for interpreting aligned shift equivalence of C*-correspondences. We define the structure of a bicategory $\mathfrak{C}^{\mathbb{N}}_{\mathrm{pr}}$ of proper and full C*-correspondences (see \cite[Definition 3.1]{MS19}).

\begin{enumerate}
\item The objects of this category are pairs $(\A,X)$ where $\A$ is a C*-algebra and $X$ is a full and regular C*-correspondence over $\A$.

\item Let $(\A,X)$ and $(\B,X)$ be objects. A $1$-arrow from $(\A,X)$ to $(\B,Y)$ is a pair $[F,\Phi_F] : (\B,Y) \leftarrow (\A,X)$ where $F$ is a full and proper $\B-\A$ correspondence and $\Phi_F : Y \otimes_{\B} F \rightarrow F \otimes_{\A} X$ is a unitary isomorphism of $\B-\A$ correspondences. We say that a $1$-arrow $[F,\Phi_F]$ is an equivalence arrow if $F$ is an imprimitivity bimodule.

\item Let $[F,\Phi_F], [G,\Phi_G] : (\B,Y) \leftarrow (\A,X)$ be $1$-arrows between the same pair of objects. A $2$-arrow $\Psi: [F,\Phi_F] \rightarrow [G,\Phi_G]$ between $1$-arrows $[F,\Phi_F]$ and $[G,\Phi_G]$ is a unitary correspondence isomorphism $\Psi : F \rightarrow G$ which makes the following diagram commute
\begin{equation}\label{eq:2-arrow}
\begin{tikzcd}
                        {Y\otimes_\B F} & {F\otimes_\A X}\\
                        {Y\otimes_\B G} & {G\otimes_\A X}
                        \arrow["{1_Y\otimes \Psi}", rightarrow, from=1-1, to=2-1]
                        \arrow["{\Phi_F}", rightarrow, from=1-1, to=1-2]
                        \arrow["{\Phi_G}"', rightarrow, from=2-1, to=2-2]
                        \arrow["{\Psi \otimes 1_X}", rightarrow, from=1-2, to=2-2].
\end{tikzcd}
\end{equation}
\end{enumerate}

When $(\A,X), (\B,Y), (\C,Z)$ are objects, and $[G,\Phi_G] : (\C,Z) \leftarrow (\B,Y)$ and $[F,\Phi_F] : (\B,Y) \leftarrow (\A,X)$ are $1$-arrows, the composition $1$-arrow $[G,\Phi_G] \otimes_{\B}[F,\Phi_F]$ is given by\- $[G\otimes_{\B}F, \Phi_G \odot \Phi_F] : (\C,Z) \leftarrow (\A,X)$, where $\Phi_G \odot \Phi_F := (1_G \otimes \Phi_F) \circ (\Phi_G \otimes 1_F)$. When $[F,\Phi_F], [G,\Phi_G], [H,\Phi_H] : (\B,Y) \leftarrow (\A,X)$ are $1$-arrows with $2$-arrows $\Psi_1 : [F,\Phi_F] \rightarrow [G,\Phi_G]$ and $\Psi_2 : [G,\Phi_G] \rightarrow [H,\Phi_H]$, the \emph{vertical} composition $\Psi_2 \circ \Psi_1 : [F,\Phi_F] \rightarrow [H,\Phi_H]$ is also a $2$-arrow. Additionally, if there are 2-arrows $\Psi_i\colon [F_i, \Phi_{F_i}] \to [G_i, \Phi_{G_i}]$ for $i=1,2$, then the \emph{horizontal} composition is the arrow $\Psi_1\otimes \Psi_2 \colon [F_1\otimes G_1, \Psi_{F_1}\odot \Psi_{G_1}] \to [F_2\otimes G_2, \Psi_{F_2}\odot \Psi_{G_2}]$.

An important property of the bicategory $\mathfrak{C}^{\mathbb{N}}_{\mathrm{pr}}$ is that all 2-arrows are invertible. Indeed, let $\Psi \colon [F, \Phi_F] \to [G, \Phi_G]$ be a 2-arrow which is a unitary correspondence isomorphism $\Psi \colon F\to G$ satisfying \eqref{eq:2-arrow}. Then, $\Psi^{-1} \colon G\to F$ is also a unitary isomorphism satisfying \eqref{eq:2-arrow}. Therefore, two $1$-arrows are isomorphic if and only if there is a $2$-arrow between them.

Every object $(\A,X)$ has the natural self $1$-arrow $[\mathcal{A}, 1_X]$, where we write $1_X$ to denote the natural map $X \otimes_{\A} \A \rightarrow \A \otimes_\A X$.  It is easy to verify directly that $[\A,1_X]$ acts as a unit for composition of $1$-arrows, and that composition of $1$-arrows is associative. Moreover, there is a canonical $1$-arrow $[X,1_{X\otimes_{\A}X}]$ which induces a sequence of $1$-arrows $[X^{\otimes m}, 1_{X^{\otimes m+1}}]$, where we have abused notation and identified the associativity isomorphism $X\otimes_{\A} X^{\otimes m} \cong X^{\otimes m} \otimes X$ with $1_{X^{\otimes m+1}}$. 

Suppose now that $(M, N, \Phi_M, \Phi_N, \Psi_X, \Psi_Y)$ is a concrete shift between $(\A,X)$ and $(\B,Y)$ of lag $m$. The key observation now is that the concrete shift $(M, N, \Phi_M, \Phi_N, \Psi_X, \Psi_Y)$ is \emph{aligned} exactly when the unitary isomorphisms $\Psi_X$ and $\Psi_Y$ can be chosen to be $2$-arrows $\Psi_X: [M\otimes_{\B} N, \Phi_M \odot \Phi_N] \rightarrow [X^{\otimes m}, 1_{X^{\otimes m+1}}]$ and $\Psi_Y: [N\otimes_{\A} M, \Phi_M \odot \Phi_N] \rightarrow [Y^{\otimes m}, 1_{Y^{\otimes m+1}}]$. This realization of aligned shift equivalence as $2$-arrows in the bicategory $\mathfrak{C}_{\mathrm{pr}}^{\mathbb{N}}$ will allow us to prove transitivity of aligned shift relation of C*-correspondences from abstract principles.

\begin{lem}\label{lem:ind-iso-mor}
Let $[F,\Phi_F]: (\B,Y) \leftarrow (\A,X)$ be a $1$-arrow. Then $\Phi_F$ induces a $2$-arrow $[Y,1_{Y\otimes Y}] \otimes [F,\Phi_F] \to [F,\Phi_F] \otimes [X,1_{X\otimes X}]$. Consequently, the $1$-arrows $[F,\Phi_F] \otimes [X^{\otimes m},1_{X^{\otimes m+1}}]$ and $[Y^{\otimes m},1_{Y^{\otimes m+1}}]\otimes[F,\Phi_F]$ are isomorphic.
\end{lem}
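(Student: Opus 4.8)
The plan is to take the asserted $2$-arrow to be $\Phi_F$ itself. First I would unwind the two composite $1$-arrows. By the composition formula for $1$-arrows, $[Y,1_{Y\otimes Y}]\otimes[F,\Phi_F]=[Y\otimes_\B F,\,1_{Y\otimes Y}\odot\Phi_F]$, and since $1_{Y\otimes Y}$ and $1_F$ are identity maps, the formula $\Phi_G\odot\Phi_F=(1_G\otimes\Phi_F)\circ(\Phi_G\otimes 1_F)$ collapses, after suppressing the associativity isomorphism $Y\otimes_\B(F\otimes_\A X)\cong(Y\otimes_\B F)\otimes_\A X$, to $1_{Y\otimes Y}\odot\Phi_F=\id_Y\otimes\Phi_F$. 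Symmetrically $[F,\Phi_F]\otimes[X,1_{X\otimes X}]=[F\otimes_\A X,\,\Phi_F\odot 1_{X\otimes X}]$ with $\Phi_F\odot 1_{X\otimes X}=\Phi_F\otimes\id_X$ after suppressing $(F\otimes_\A X)\otimes_\A X\cong F\otimes_\A(X\otimes_\A X)$. Since $[F,\Phi_F]$ is a $1$-arrow, $\Phi_F\colon Y\otimes_\B F\to F\otimes_\A X$ is a unitary isomorphism of $\B-\A$ correspondences, so it is a legitimate candidate for a $2$-arrow $[Y\otimes_\B F,\,\id_Y\otimes\Phi_F]\to[F\otimes_\A X,\,\Phi_F\otimes\id_X]$.

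It then remains to verify that diagram \eqref{eq:2-arrow} commutes with $\Psi=\Phi_F$. With the identifications above, the top and left edges of that square are both $\id_Y\otimes\Phi_F$ and the right and bottom edges are both $\Phi_F\otimes\id_X$, so on an elementary tensor $y_1\otimes(y_2\otimes f)$ both ways around the square amount to the same operation: apply $\Phi_F$ to the last two tensor legs, reassociate, then apply $\Phi_F$ to the first two legs. Hence the square commutes and $\Psi=\Phi_F$ is the desired $2$-arrow. By the remark recorded just before the lemma, every $2$-arrow is invertible, and so $[Y,1_{Y\otimes Y}]\otimes[F,\Phi_F]$ and $[F,\Phi_F]\otimes[X,1_{X\otimes X}]$ are isomorphic $1$-arrows.

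For the ``consequently'' clause I would induct on $m$; the base case $m=1$ is exactly the isomorphism just established, using $[X,1_{X\otimes X}]=[X^{\otimes 1},1_{X^{\otimes 2}}]$ and likewise for $Y$. For the step, note that with the conventions fixed in the text the composition of canonical $1$-arrows is concatenation, so $[X^{\otimes m+1},1_{X^{\otimes m+2}}]=[X,1_{X\otimes X}]\otimes[X^{\otimes m},1_{X^{\otimes m+1}}]$ and similarly for $Y$. Then
\begin{align*}
[F,\Phi_F]\otimes[X^{\otimes m+1},1_{X^{\otimes m+2}}]
&\cong\bigl([F,\Phi_F]\otimes[X,1_{X\otimes X}]\bigr)\otimes[X^{\otimes m},1_{X^{\otimes m+1}}]\\
&\cong\bigl([Y,1_{Y\otimes Y}]\otimes[F,\Phi_F]\bigr)\otimes[X^{\otimes m},1_{X^{\otimes m+1}}]\\
&\cong[Y,1_{Y\otimes Y}]\otimes\bigl([F,\Phi_F]\otimes[X^{\otimes m},1_{X^{\otimes m+1}}]\bigr)\\
&\cong[Y,1_{Y\otimes Y}]\otimes\bigl([Y^{\otimes m},1_{Y^{\otimes m+1}}]\otimes[F,\Phi_F]\bigr)\\
&\cong[Y^{\otimes m+1},1_{Y^{\otimes m+2}}]\otimes[F,\Phi_F],
\end{align*}
where the first, third and last isomorphisms use associativity of $1$-arrow composition together with the concatenation identifications, the second is the $m=1$ case whiskered on the right by $[X^{\otimes m},1_{X^{\otimes m+1}}]$, and the fourth is the inductive hypothesis whiskered on the left by $[Y,1_{Y\otimes Y}]$; each whiskering is a horizontal composition with an identity $2$-arrow, hence again invertible.

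I expect the only real obstacle to be the bookkeeping in the first paragraph: correctly identifying $1_{Y\otimes Y}\odot\Phi_F$ and $\Phi_F\odot 1_{X\otimes X}$ by running the composition formula in the two special cases while keeping track of, and then discarding, the suppressed associativity isomorphisms. Once the two composite $1$-arrows are named, the commutativity of \eqref{eq:2-arrow} and the inductive bootstrapping are purely formal.
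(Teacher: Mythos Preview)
Your proposal is correct and follows essentially the same approach as the paper: identify $1_{Y\otimes Y}\odot\Phi_F=\id_Y\otimes\Phi_F$ and $\Phi_F\odot 1_{X\otimes X}=\Phi_F\otimes\id_X$, observe that the square \eqref{eq:2-arrow} then has identical top/left and bottom/right edges so it commutes trivially, and finish by induction on $m$. Your write-up of the inductive step is in fact more explicit than the paper's, which simply asserts that the second part follows by induction.
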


\begin{proof}
By definition, $\Phi_F :Y \otimes_{\B} F \rightarrow F\otimes_{\A} X$ is a unitary correspondence isomorphism, and comprises a 2-arrow if it satisfies \eqref{eq:2-arrow}. First, we show that the following diagram commutes.
\[
\begin{tikzcd}
                  {Y\otimes (Y\otimes F)} & {(Y\otimes F)\otimes X}\\
                  {Y\otimes(F\otimes X)} & {(F\otimes X)\otimes X}
                  \arrow["{\Phi_F\odot 1_{X^{\otimes2}}}"', rightarrow, from=2-1, to=2-2]
                  \arrow["{\Phi_F\otimes 1_X}", rightarrow, from=1-2, to=2-2]
                  \arrow["{1_{Y^{\otimes2}}\odot \Phi_F}", rightarrow, from=1-1, to=1-2]
                  \arrow["{1_Y\otimes\Phi_F}"', rightarrow, from=1-1, to=2-1].
\end{tikzcd}
\]
By definition, we have $1_{Y^{\otimes 2}} \odot \Phi_F = (1_{Y} \otimes \Phi_F)\circ (1_{Y^{\otimes 2}} \otimes 1_F)= 1_Y\otimes \Phi_F$ and similarly $\Phi_F\odot 1_{X^{\otimes 2}} = \Phi_F \otimes 1_X$.
Thus, the diagram above is the same as the following diagram which clearly commutes
\[
\begin{tikzcd}
                  {Y\otimes (Y\otimes F)} & {(Y\otimes F)\otimes X}\\
                  {Y\otimes(F\otimes X)} & {(F\otimes X)\otimes X}
                  \arrow["{\Phi_F\otimes 1_X}"', rightarrow, from=2-1, to=2-2]
                  \arrow["{\Phi_F\otimes 1_X}", rightarrow, from=1-2, to=2-2]
                  \arrow["{1_Y\otimes\Phi_F}", rightarrow, from=1-1, to=1-2]
                  \arrow["{1_Y\otimes\Phi_F}"', rightarrow, from=1-1, to=2-1].
\end{tikzcd}.
\]
Therefore, the 1-arrows $[F,\Psi_F]\otimes [X,1_{X^{\otimes 2}}]$ and $[Y,1_{Y^{\otimes 2}}]\otimes [F, \Psi_F]$ are isomorphic via a $2$-arrow. Lastly, the second part of the statement follows by induction.
\end{proof}

To illustrate the advantage of using the language of bicategories, we prove that ASE of C*-correspondences is an equivalence relation.

\begin{prp} \label{prp:transitivity-mod-asr}
Aligned shift relation between C*-correspondences which are full and regular is an equivalence relation.
\end{prp}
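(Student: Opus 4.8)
The plan is to establish the three properties of an equivalence relation, leveraging the bicategorical framework so that transitivity—the only nontrivial part—becomes essentially formal.

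\emph{Reflexivity.} For a full and regular correspondence $(\A,X)$, I would exhibit the concrete shift $(M,N,\Phi_M,\Phi_N,\Psi_X,\Psi_Y)$ with lag $1$ given by $M = N = X$, with $\Phi_M = \Phi_N = 1_{X\otimes X}$ (the associativity isomorphism $X\otimes_\A X \to X \otimes_\A X$, i.e., the identity) and $\Psi_X = \Psi_Y = \id_{X\otimes X}$. Checking the alignment identities is then immediate: both sides of each identity collapse to $\id_{X^{\otimes 2}}$ after using that all the maps involved are identity maps modulo the suppressed associativity isomorphism. Equivalently, in bicategorical language, the self $1$-arrow $[X, 1_{X\otimes X}]$ together with the identity $2$-arrow witnesses that $X$ is aligned shift related to itself with lag $1$.

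\emph{Symmetry.} If $(M,N,\Phi_M,\Phi_N,\Psi_X,\Psi_Y)$ is an aligned concrete shift between $X$ and $Y$ with lag $m$, then I would check that the tuple $(N, M, \Phi_N, \Phi_M, \Psi_Y, \Psi_X)$ is an aligned concrete shift between $Y$ and $X$ with lag $m$: the roles of $X$ and $Y$, of $M$ and $N$, and of $\Phi_M$ and $\Phi_N$ are simply interchanged, and the two alignment identities in Definition~\ref{dfn_mase} are swapped into one another. So symmetry holds essentially by the symmetry of the definition itself.

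\emph{Transitivity.} This is the main point, and where the bicategorical reformulation pays off. Suppose $(M_1,N_1,\Phi_{M_1},\Phi_{N_1},\Psi_{X_1},\Psi_{Y_1})$ is an aligned concrete shift between $(\A,X)$ and $(\B,Y)$ with lag $m$, and $(M_2,N_2,\Phi_{M_2},\Phi_{N_2},\Psi_{Y_2},\Psi_{Z_2})$ is an aligned concrete shift between $(\B,Y)$ and $(\C,Z)$ with lag $n$. Using the key observation recorded before Lemma~\ref{lem:ind-iso-mor}, aligned concrete shifts correspond exactly to pairs of $2$-arrows $\Psi_{X_1}\colon [M_1\otimes_\B N_1, \Phi_{M_1}\odot\Phi_{N_1}] \to [X^{\otimes m}, 1_{X^{\otimes m+1}}]$ and $\Psi_{Y_1}\colon [N_1\otimes_\A M_1, \Phi_{N_1}\odot\Phi_{M_1}] \to [Y^{\otimes m}, 1_{Y^{\otimes m+1}}]$, and similarly for the second shift. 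I would then take $M := M_2 \otimes_\B M_1$ (a $\C-\A$ correspondence — or its reverse, depending on conventions) and $N := N_1 \otimes_\B N_2$, equip them with the composite unitaries $\Phi_M := \Phi_{M_2}\odot\Phi_{M_1}$ type maps built by tensoring and composing the given ones, and produce the required $2$-arrows landing in $[X^{\otimes mn}, 1]$ and $[Z^{\otimes mn}, 1]$ as follows. Composing $1$-arrows, $[M_2\otimes M_1] \otimes [N_1 \otimes N_2]$ is isomorphic, via horizontal composition of the $2$-arrows $\Psi_{X_1}$-type pieces, to $[M_2 \otimes X^{\otimes m} \otimes N_2]$; then Lemma~\ref{lem:ind-iso-mor} (applied $m$ times to the $1$-arrow built from $M_2$, which intertwines $Y^{\otimes m}$ and $X^{\otimes m}$) lets me slide the $X^{\otimes m}$ across to get $[Y^{\otimes m}\otimes M_2 \otimes N_2]$; finally the second shift's $2$-arrow gives $[Y^{\otimes m} \otimes Z^{\otimes n \cdot ?}]$ — here I would match the lags, replacing lag $m$ and lag $n$ by the common lag $mn$ using the standard fact that a concrete shift of lag $k$ yields one of lag $k\ell$ for any $\ell$ (iterating $\Psi_X$ via $X^{\otimes k} \otimes X^{\otimes k} \cong X^{\otimes 2k}$, etc.). Chaining these canonical $2$-arrows, and using that every $2$-arrow is invertible and that vertical/horizontal composition of $2$-arrows is again a $2$-arrow, produces the two required alignment $2$-arrows for the composite tuple. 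The hard part will be the bookkeeping: matching the two lags to a common value $mn$, keeping track of which side is the "$M$" and which the "$N$" under composition of $1$-arrows, and verifying that the composite of all the intermediate $2$-arrows is precisely of the form $\Psi_{X}\colon [M\otimes N, \Phi_M\odot\Phi_N] \to [X^{\otimes mn}, 1]$ rather than merely an abstract isomorphism — i.e., that the alignment identities of Definition~\ref{dfn_mase} are genuinely recovered. But all of this is formal manipulation in $\mathfrak{C}^{\mathbb{N}}_{\mathrm{pr}}$ with no analytic input, which is exactly the advantage the bicategorical language provides.
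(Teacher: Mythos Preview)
Your strategy is the paper's: reflexivity and symmetry are immediate, and for transitivity you compose the given $1$-arrows and chain $2$-arrows, using Lemma~\ref{lem:ind-iso-mor} to slide powers of the middle correspondence across. However, your execution of transitivity is garbled in two places.

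First, the composition order. With $M_1$ an $\A$--$\B$ correspondence and $M_2$ a $\B$--$\C$ correspondence, the composite $1$-arrow must be $[M,\Phi_M] = [M_1,\Phi_{M_1}]\otimes[M_2,\Phi_{M_2}]$ with underlying correspondence $M_1\otimes_\B M_2$, and similarly $[N,\Phi_N]=[N_2,\Phi_{N_2}]\otimes[N_1,\Phi_{N_1}]$. Your expression $M_2\otimes_\B M_1$ does not even type-check, and your intermediate object ``$M_2\otimes X^{\otimes m}\otimes N_2$'' mixes incompatible coefficient algebras ($M_2$ is over $\B$--$\C$, while $X$ is over $\A$).

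Second, and more substantively, the resulting lag is $m+n$, not $mn$; no preliminary ``matching of lags'' is needed. With the correct order the chain of $2$-arrows is simply
\[
[M_1]\otimes[M_2]\otimes[N_2]\otimes[N_1]\;\cong\;[M_1]\otimes[Y^{\otimes n},1]\otimes[N_1]\;\cong\;[M_1]\otimes[N_1]\otimes[X^{\otimes n},1]\;\cong\;[X^{\otimes m},1]\otimes[X^{\otimes n},1]\;\cong\;[X^{\otimes m+n},1],
\]
where the first and third steps use the given aligned-shift $2$-arrows $\Psi_{Y}$ and $\Psi_X$, and the second step is Lemma~\ref{lem:ind-iso-mor} applied to $[N_1,\Phi_{N_1}]$. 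The $Z$-side is symmetric. Once you fix the order and drop the $mn$ detour, your argument and the paper's coincide.
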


\begin{proof}
Symmetry and reflexivity are clear. Thus, we need only prove transitivity. Thus, suppose that $(\A,X),(\B,Y),(\C,Z)$ are objects, and assume that $(M_1, N_1, \Phi_{M_1}, \Phi_{N_1}, \Psi_X, \Psi_Y)$ and $(M_2, N_2, \Phi_{M_2}, \Phi_{N_2}, \Psi_Y, \Psi_Z)$ are concrete aligned shifts of lags $m$ and $n$ respectively. We want to show that the $1$-arrows $[M,\Phi_M] = [M_1, \Phi_{M_1}] \otimes [M_2, \Phi_{M_2}]$ and $[N,\Phi_N] = [N_2, \Phi_{N_1}] \otimes [N_1, \Phi_{N_1}]$ give rise to $2$-arrows $[M,\Phi_M] \otimes [N, \Phi_N] \cong [X^{\otimes n+m},1_{X^{\otimes n+m+1}}]$ and $[N,\Phi_N] \otimes [M,\Phi_M] \cong [Z^{\otimes n+m}, 1_{Z^{\otimes n+m+1}}]$. We will show the $2$-arrow for $X$, and the one for $Z$ will follow similarly.

By definition and Lemma \ref{lem:ind-iso-mor} we have a unitary correspondence isomorphism $\Psi_X'$ defined by the following chain of $2$-arrows
\[
[M,\Phi_M] \otimes [N, \Phi_N] \cong [M_1,\Phi_{M_1}] \otimes ([M_2,\Phi_{M_2}] \otimes [N_2,\Phi_{N_2}]) \otimes [N_1,\Phi_{N_1}] \cong 
\]
\[
 [M_1,\Phi_{M_1}] \otimes [Y^{\otimes n},1_{Y^{\otimes n+1}}] \otimes [N_1,\Phi_{N_1}] \cong [M_1,\Phi_{M_1}] \otimes [N_1,\Phi_{N_1}] \otimes [X^{\otimes n},1_{X^{\otimes n+1}}] \cong
\]
\[
[X^{\otimes m},1_{X^{\otimes m+1}}] \otimes [X^{\otimes n},1_{X^{\otimes n+1}}] \cong [X^{\otimes m+n},1_{X^{\otimes m+n+1}}].
\]
Similarly, we define $\Psi_Z'$ with the appropriate analogous chain of identities. Since composition of $2$-arrows is again a $2$-arrow, we see that $(M,N,\Phi_M,\Phi_N,\Psi_X',\Psi_Z')$ comprises a concrete aligned shift of lag $m+n$, and we see that $(\A,X)$ and $(\C,Z)$ are aligned shift related.
\end{proof}

In the work of Meyer and Sehnem \cite{MS19}, a homomorphism (which is the analogue of a functor in the context of bicategories) from $\mathfrak{C}^{\mathbb{N}}_{\mathrm{pr}}$ to the sub-bicategory of imprimitivity bimodules was constructed. Denote by $\mathfrak{C}_{\mathrm{pr},*}^{\mathbb{N}}$ the full sub-bicategory of $\mathfrak{C}^{\mathbb{N}}_{\mathrm{pr}}$ with objects $(\A, X)$ such that $X$ is an imprimitivity bimodule. Next, we describe a homomorphism from $\mathfrak{C}^{\mathbb{N}}_{\mathrm{pr}}$ to $\mathfrak{C}_{\mathrm{pr},*}^{\mathbb{N}}$.

First, an object $(\A,X)$ is mapped to the pair $(\O_X^0, \O_X^1)$, where $X\otimes_{\A} \O_X^0$ is identified as the graded component $\O_X^1$ of $1\in \mathbb{Z}$ of the Cuntz-Pimsner C*-algebra. Now, suppose that $[F,\Phi_F] : (\A,X) \leftarrow (\B,Y)$ is a $1$-arrow. Consider the $\A -\O_Y^0$ correspondence given by $\O_{F,\Phi_F}^0:= F\otimes_{\B} \O_Y^0$. Then, the map $\Phi_F$ induces a unitary isomorphism of correspondences by
\[
\widehat{\Phi}_F \colon X\otimes_{\A} \O_{F, \psi_F}^0 \xrightarrow{\Phi_F\otimes 1_{\O_Y^0}} F\otimes_{\B} Y\otimes_{\B} \O_Y^0 \cong F\otimes_{\B} \O_Y^1 \cong \O_{F, \Phi_F}^0 \otimes_{\O_Y^0} \O_Y^1.
\]
Hence, the pair $[\O_{F, \psi_F}^0,\widehat{\Phi}_F]$ is a $1$-arrow from $(\O_Y^0, \O_Y^1)$ to $(\A,X)$. By \cite[Proposition 3.4]{MS19} it extends \emph{uniquely} to a $1$-arrow from $(\O_Y^0, \O_Y^1)$ to $(\O_X^0, \O_X^1)$ which we continue to denote by $[\O_{F, \psi_F}^0,\widehat{\Phi}_F]$. By \cite[Corollary 4.7]{MS19} the above assignment of objects $(\A,X) \mapsto (\O_X^0,\O_X^1)$ and of $1$-arrows $[F,\Phi_F] \mapsto [\O_{F,\Phi_F}^0, \widehat{\Phi}_F]$ defines a homomorphism from the bicategory $\mathfrak{C}^{\mathbb{N}}_{\mathrm{pr}}$ to the bicategory $\mathfrak{C}^{\mathbb{N}}_{\mathrm{pr},*}$. In particular, if $[F,\Phi_F] : (\A,X) \leftarrow (\B,Y)$ and $[G,\Phi_F] : (\B,Y) \leftarrow (\C,Z)$ are $1$-arrows, then there is a canonical $2$-arrow 
\begin{equation} \label{eq:canonical-iso-bicat-imp}
[\O_{F,\Phi_F}^0, \widehat{\Phi}_F] \otimes [\O_{G,\Phi_G}^0, \widehat{\Phi}_G] \cong [\O_{F\otimes G, \Phi_F \odot \Phi_G}^0, \widehat{\Phi}_{F\otimes G}].
\end{equation}

\begin{rmk}
We warn the reader that when $[F,\Phi_F] : (\A,X) \leftarrow (\B,Y)$ is a $1$-arrow and $X,Y$ are objects in $\mathfrak{C}^{\mathbb{N}}_{\mathrm{pr}}$, even when $F$ is regular and full (so that $\O_{F, \psi_F}^0$ is regular and full), the $1$-arrow $[\O_{F, \psi_F}^0,\widehat{\Phi}_F]$ may generally fail to be an equivalence $1$-arrow. 

Thus, when $(M,N,\Phi_M,\Phi_N,\Psi_X,\Psi_Y)$ is a concrete shift with lag $m$ between $X$ and $Y$ in $\mathfrak{C}^{\mathbb{N}}_{\mathrm{pr}}$, the C*-correspondences $\O_{M, \psi_M}^0$ and $\O_{N, \psi_N}^0$ may fail to coincide with $M_{\infty}$ and $N_{\infty}$ as defined in \cite{KK-JFA2014} and \cite{CDE24}. This is because $M_{\infty}$ and $N_{\infty}$ are imprimitivity bimodules defined by using the identifications arising from $\Psi_X$ and $\Psi_Y$ (see \cite[Section 6]{CDE24}).
\end{rmk}

We will need the following lemma, which describes the image of $[X^{\otimes m},1_{X^{\otimes m+1}}]$ under the homomorphism defined above. Note that since $(\A,X)$ is an object in $\mathfrak{C}^{\mathbb{N}}_{\mathrm{pr}}$ we have that $(\O_X^1)^{\otimes m} \cong \O_X^m$ as $\O_X^0$-correspondences, and we identify these two imprimitivity bimodules.

\begin{lem} \label{lem:ident-obj-cuntz}
Let $(\A,X)$ be an object in $\mathfrak{C}^{\mathbb{N}}_{\mathrm{pr}}$. Then $[\O^0_{X^{\otimes m},1_{X^{\otimes m+1}}}, \widehat{1}_{X^{\otimes m+1}}]$ is isomorphic to $[\O_X^m,1_{\O_X^{m+1}}]$.
\end{lem}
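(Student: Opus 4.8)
The plan is to unwind the definition of the homomorphism from $\mathfrak{C}^{\mathbb{N}}_{\mathrm{pr}}$ to $\mathfrak{C}^{\mathbb{N}}_{\mathrm{pr},*}$ on the specific $1$-arrow $[X^{\otimes m},1_{X^{\otimes m+1}}] : (\A,X) \leftarrow (\A,X)$, which is a self-$1$-arrow given by a power of $X$ together with the trivial shift $1_{X^{\otimes m+1}}$, and to show directly that it agrees with the canonical self-equivalence $[\O_X^m, 1_{\O_X^{m+1}}]$ of the object $(\O_X^0,\O_X^1)$. By definition, $\O^0_{X^{\otimes m},1_{X^{\otimes m+1}}} = X^{\otimes m}\otimes_{\A}\O_X^0$ as an $\A - \O_X^0$ correspondence, and the induced map $\widehat{1}_{X^{\otimes m+1}}$ is the composite $X\otimes_{\A}(X^{\otimes m}\otimes_{\A}\O_X^0) \xrightarrow{1_{X^{\otimes m+1}}\otimes 1} X^{\otimes m}\otimes_{\A} X\otimes_{\A}\O_X^0 \cong X^{\otimes m}\otimes_{\A}\O_X^1 \cong (X^{\otimes m}\otimes_{\A}\O_X^0)\otimes_{\O_X^0}\O_X^1$. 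So the content is to identify $X^{\otimes m}\otimes_{\A}\O_X^0$ with $\O_X^m$ as an $\O_X^0$-imprimitivity bimodule, compatibly with these structure maps.

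First I would recall, following the setup right before Lemma \ref{lem:ident-obj-cuntz}, that there is a canonical unitary $X\otimes_{\A}\O_X^0 \cong \O_X^1$ of $\O_X^0$-imprimitivity bimodules (this is precisely how the object $(\A,X)$ is mapped to $(\O_X^0,\O_X^1)$), and that iterating the multiplication in $\O_X$ gives identifications $(\O_X^1)^{\otimes k}\cong \O_X^k$ of $\O_X^0$-correspondences for all $k$, which are the ones being suppressed in the statement. Tensoring the first isomorphism on the left by $X^{\otimes(m-1)}$ and induction then yields a unitary $W\colon X^{\otimes m}\otimes_{\A}\O_X^0 \to (\O_X^1)^{\otimes m}\cong \O_X^m$ of $\O_X^0$-imprimitivity bimodules; concretely $W$ sends $\xi_1\otimes\cdots\otimes\xi_m\otimes a$ to the product of the corresponding degree-one elements. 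Next I would verify that $W$ intertwines $\widehat{1}_{X^{\otimes m+1}}$ with $1_{\O_X^{m+1}}$, i.e. that the diagram defining a $2$-arrow in \eqref{eq:2-arrow} commutes; this reduces, after the identifications $X\otimes_\A\O_X^0\cong\O_X^1$ and $(\O_X^1)^{\otimes k}\cong\O_X^k$, to the associativity of multiplication in $\O_X$ together with the fact that $1_{X^{\otimes m+1}}$ is just the associativity isomorphism reshuffling $X\otimes X^{\otimes m}$ to $X^{\otimes m}\otimes X$ — once everything is pushed into $\O_X$ both legs become the identity on $\O_X^{m+1}$. Finally, since $W$ is an isomorphism of imprimitivity bimodules compatible with the shift maps, it is a $2$-arrow $[\O^0_{X^{\otimes m},1_{X^{\otimes m+1}}}, \widehat{1}_{X^{\otimes m+1}}] \to [\O_X^m, 1_{\O_X^{m+1}}]$, giving the claimed isomorphism of $1$-arrows.

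The main obstacle I anticipate is purely bookkeeping: keeping track of the suppressed associativity isomorphisms and the two separate families of identifications ($X\otimes_\A\O_X^0\cong\O_X^1$ coming from the definition of the homomorphism, and $(\O_X^1)^{\otimes k}\cong\O_X^k$ coming from multiplication in $\O_X$) so that the $2$-arrow square is seen to commute on elementary tensors. There is also a minor point to check — that $W$ is well-defined and isometric, i.e. respects the balancing relations over $\A$ and the $\O_X^0$-valued inner products — but this is immediate from the corresponding properties of the single-step isomorphism $X\otimes_\A\O_X^0\cong\O_X^1$ applied iteratively, together with \cite[Corollary 4.7]{MS19} (functoriality) which guarantees that the homomorphism respects compositions, so that the image of $[X,1_{X\otimes X}]^{\otimes m}$ is canonically the $m$-fold composite of the images, matching $[\O_X^m,1_{\O_X^{m+1}}]$ via \eqref{eq:canonical-iso-bicat-imp}.
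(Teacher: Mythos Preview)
Your approach is essentially the same as the paper's: construct the natural unitary $W\colon X^{\otimes m}\otimes_\A\O_X^0\to\O_X^m$ (the paper uses its inverse $\Psi$) and verify that the square \eqref{eq:2-arrow} commutes. However, there is one genuine gap. You assert that $W$ is an isomorphism of $\O_X^0$-imprimitivity bimodules, but by construction $\O^0_{X^{\otimes m},1_{X^{\otimes m+1}}}=X^{\otimes m}\otimes_\A\O_X^0$ is a priori only an $\A$-$\O_X^0$ correspondence; the left $\O_X^0$-action arises only after applying the extension of \cite[Proposition~3.4]{MS19}, and you do not explain why $W$ intertwines this extended action. Likewise, the diagram you check involves $X$ on the left (that is what $\widehat{1}_{X^{\otimes m+1}}$ is), so it is the $2$-arrow condition for $1$-arrows $(\A,X)\leftarrow(\O_X^0,\O_X^1)$, not for $1$-arrows $(\O_X^0,\O_X^1)\leftarrow(\O_X^0,\O_X^1)$.

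The paper closes exactly this gap in its first sentence: by the uniqueness part of \cite[Proposition~3.4]{MS19}, it suffices to exhibit a $2$-arrow at the intermediate level, i.e.\ between the $1$-arrows viewed as arrows $(\A,X)\leftarrow(\O_X^0,\O_X^1)$. Once you add that one line, your verification of the square is precisely the paper's. Your alternative functoriality argument via \eqref{eq:canonical-iso-bicat-imp} is also viable, but it only reduces to the case $m=1$, which still requires the same appeal to \cite[Proposition~3.4]{MS19}.
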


\begin{proof}
By \cite[Proposition 3.4]{MS19} it suffices to show that there is a $2$-arrow between the $1$-arrows $[\O^0_{X^{\otimes m},1_{X^{\otimes m+1}}}, \widehat{1}_{X^{\otimes m+1}}]$ and $[\O_X^m,1_{\O_X^{m+1}}]$, when merely considered as $1$-arrows from $(\O_X^0, \O_X^m)$ to $(\A,X^{\otimes m})$. Recall that $\O^0_{X^{\otimes m},1_{X^{\otimes m+1}}} := X^{\otimes m}\otimes_{\A} \O_X^0$, so that the map $\Psi : \O_X^m \rightarrow X^{\otimes m}\otimes_{\A} \O_X^0$, which is the inverse of the natural unitary correspondence isomorphism $X^{\otimes m}\otimes_{\A} \O_X^0 \rightarrow \O_X^m$ makes the following diagram commute
\[
\begin{tikzcd}
                        {X\otimes_{\A} \O_X^m} & {\O_X^m \otimes_{\O_X^0} \O_X^1}\\
                        {X\otimes_{\A} \O_{X^{\otimes m},1_{X^{\otimes m+1}}}} & {\O_{X^{\otimes m},1_{X^{\otimes m+1}}}\otimes_{\O_X^0} \O_X^1}.
                        \arrow["{1_X\otimes \Psi}", rightarrow, from=1-1, to=2-1]
                        \arrow["{1_{\O_X^{m+1}}}", rightarrow, from=1-1, to=1-2]
                        \arrow["{\widehat{1}_{X^{\otimes m+1}}}"', rightarrow, from=2-1, to=2-2]
                        \arrow["{\Psi \otimes 1_{\O_X^1}}", rightarrow, from=1-2, to=2-2]
\end{tikzcd}
\]
Hence, we see that $\Psi$ is a $2$-arrow from $[\O_X^m,1_{\O_X^{m+1}}]$ to $[\O^0_{X^{\otimes m},1_{X^{\otimes m+1}}}, \widehat{1}_{X^{\otimes m+1}}]$.
\end{proof}

\section{Homotopy shift equivalence} \label{s:homot-se}

Considering the bicategorical approach in the previous section, it is natural to ask in what sense shift equivalence is functorial. As shown in \cite{CDE24}, merely considering $\mathfrak{C}^{\mathbb{N}}_{\mathrm{pr}}$ as a category is not enough. However, keeping track of identifications (in the sense of $2$-arrows), as is done in a bicategory, provides us with a blueprint that allows for significant results to emerge.

Let $I=[0,1]$ be the unit interval. For a right Hilbert $\B$-module $F$, we denote by $C(I,F)$ the natural right Hilbert $C(I,\B)$-module. If $F$ is an $\A-\B$ correspondence, then $C(I,F)$ is naturally endowed with the structure of a $C(I,\A)-C(I,\B)$ correspondence.

For any $t\in [0,1]$ there is a $C(I,\A) - \A$ correspondence $\A_t$ which is merely $\A$ as a right Hilbert C*-module, together with the left action $f\cdot a = f(t)a$ for all $f\in C(I,\A)$ and $a\in \A$. In what follows, whenever $F$ is a $\B - C(I,\A)$ correspondence, we denote $F_t:= F \otimes_{C(I,\A)} \A_t$ for the evaluation of $F$ at $t\in [0,1]$. On the other direction, when $F$ is a $\B-\A$ correspondence, we can treat $F$ as a $\B - C(I,\A)$ correspondence where multiplication on the left is by constant $\A$-valued functions.

The assignment $A\mapsto C(I,\A)$ and $F \mapsto C(I,F)$ defines a homomorphism from $\mathfrak{C}^{\mathbb{N}}_{\mathrm{pr}}$ to itself. For any $t\in I$ there is a $1$-arrow $\ev_t:= [\A_t,\eta_t] : (C(I,\A),C(I,X)) \leftarrow (\A,X)$, where $\eta_t$ is the canonical unitary correspondence isomorphism $C(I,X) \otimes_{C(I,\A)} \A_t \rightarrow \A_t \otimes_{\A} X$. On the other direction, there is a $1$-arrow $[C(I,X), \eta] : (\A,X) \leftarrow (C(I,\A),C(I,X))$ where multiplication on the left is by constant functions. Whenever $[F,\Phi_F] : (\B,Y) \leftarrow (C(I,\A),C(I,X))$ is a $1$-arrow, we will write $[F_t,\Phi_{F_t}] : (\B,Y) \leftarrow (\A,X)$ for the $1$-arrow defined by $[F,\Phi_F] \otimes [\A_t,\eta_t] = [F_t, \Phi_{F_t} \odot \eta_t]$.

Let $F,G$ be two full and proper $\B-\A$ correspondences. A \emph{homotopy} from $F$ to $G$ is a triple $(H,h_0,h_1) : F \sim G$ where $H$ is a full and proper $\B-C(I,\A)$ correspondence and $h_0: H_0 \rightarrow F$ and $h_1 : H_1 \rightarrow G$ are unitary correspondence isomorphisms. For our purposes this will not be enough, since we need to consider the notion of homotopy for 1-arrows in the bicategory of correspondences.

\begin{dfn}
Let $[F,\Phi_F], [G,\Phi_G] : (\B,Y) \leftarrow (\A,X)$ be $1$-arrows. A \emph{homotopy of $1$-arrows} from $[F,\Phi_F]$ to $[G,\Phi_G]$ is a quadruple $(H,\Phi_H,h_0,h_1)$ consisting of a $1$-arrow $[H,\Phi_H] : (\B,Y) \leftarrow (C(I,\A), C(I,X))$ together with $2$-arrows $h_0 :[H_0,\Phi_{H_0}] \rightarrow [F, \Phi_F]$ and $h_1 : [H_1, \Phi_{H_1}] \rightarrow [G,\Phi_G]$. When there is a homotopy between the $1$-arrows $[F,\Phi_F]$ and $[G,\Phi_G]$, we will say that they are homotopy equivalent, and denote this by $[F,\Phi_F] \sim [G,\Phi_G]$
\end{dfn}

It is straightforward to verify that homotopy of $1$-arrows in $\mathfrak{C}^{\mathbb{N}}_{\mathrm{pr}}$ is an equivalence relation. Now, since aligned shift equivalence of C*-correspondences is phrased in terms of $1$-arrows in the bicategory $\mathfrak{C}^{\mathbb{N}}_{\mathrm{pr}}$, it leads us to the following homotopy version of shift equivalence.

\begin{dfn}
Let $(\A,X)$ and $(\B,Y)$ be objects in $\mathfrak{C}^{\mathbb{N}}_{\mathrm{pr}}$, and let $[M,\Phi_M] : (\A,X) \leftarrow (\B,Y)$ and $[N,\Phi_N]: (\B,Y) \leftarrow (\A,X)$ be $1$-arrows. We say that the quadruple $(M,N,\Phi_M, \Phi_N)$ is a concrete \emph{homotopy shift of lag $m$} between $(\A,X)$ and $(\B,Y)$ if the $1$-arrows $[M\otimes N, \Phi_M \odot \Phi_N]$ and $[N\otimes M, \Phi_N \odot \Phi_M]$ are \emph{homotopic} to $[X^{\otimes m},1_{X^{\otimes m+1}}]$ and $[Y^{\otimes m},1_{Y^{\otimes m+1}}]$ respectively. We say that $(\A,X)$ and $(\B,Y)$ are \emph{homotopy shift equivalent with lag $m$} if there is a concrete homotopy shift of lag $m$ between them.
\end{dfn}

By essentially the same proof as that of Proposition \ref{prp:transitivity-mod-asr}, up to appropriately replacing $2$-arrows with homotopy equivalence of $1$-arrows, it follows that homotopy shift equivalence is an equivalence relation as well. The advantage of considering homotopy shift equivalence is that for graph C*-correspondences it is implied by (and in fact equivalent to) shift equivalence of the underlying adjacency matrices.

\begin{prp} \label{p:se=hse}
Let $A$ and $B$ be finite essential square matrices over $V$ and $W$ respectively, with entries in $\mathbb{N}$. If $A$ and $B$ are shift equivalent of lag $m$ then the C*-correspondences $X(A)$ and $X(B)$ are homotopy shift equivalent of lag $m$.
\end{prp}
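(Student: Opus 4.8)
The plan is to start from a concrete aligned shift between $X(A)$ and $X(B)$ of lag $m$, and then weaken it to a homotopy shift by forgetting the $\Psi_X,\Psi_Y$ data and replacing the resulting $2$-arrows with homotopies. Concretely, by \cite[Proposition~3.5]{CDE24} the hypothesis that $A$ and $B$ are shift equivalent with lag $m$ gives us a concrete shift $(M,N,\Phi_M,\Phi_N,\Psi_X,\Psi_Y)$ between $X(A)$ and $X(B)$; and by \cite[Lemma~4.2]{CDE24} (or the bicategorical interpretation discussed after Definition \ref{dfn_mase}) this can be taken to be aligned. As explained in the text, alignedness says exactly that $\Psi_X$ is a $2$-arrow $[M\otimes_\B N,\Phi_M\odot\Phi_N]\to[X(A)^{\otimes m},1_{X(A)^{\otimes m+1}}]$ and $\Psi_Y$ is a $2$-arrow $[N\otimes_\A M,\Phi_N\odot\Phi_M]\to[X(B)^{\otimes m},1_{X(B)^{\otimes m+1}}]$. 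Since a $2$-arrow is in particular a (constant) homotopy of $1$-arrows, the quadruple $(M,N,\Phi_M,\Phi_N)$ is immediately a concrete homotopy shift of lag $m$ between $(\A,X(A))$ and $(\B,X(B))$, where $\A=c(V)$ and $\B=c(W)$.

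The one point requiring care is that the definition of a concrete homotopy shift requires $[M,\Phi_M]$ and $[N,\Phi_N]$ to be honest $1$-arrows in $\mathfrak{C}^{\mathbb{N}}_{\mathrm{pr}}$, i.e.\ $M$ must be a full and proper $\A-\B$ correspondence and $N$ a full and proper $\B-\A$ correspondence, and $\Phi_M,\Phi_N$ must be unitary isomorphisms of the appropriate correspondences. Fullness and properness of $M$ and $N$ follow because $A$ and $B$ are essential: from $\Psi_X\colon M\otimes_\B N\xrightarrow{\sim} X(A)^{\otimes m}$ one reads off that $M$ is full on the left and $N$ is full on the right (and symmetrically from $\Psi_Y$), and properness follows from the fact that $M\otimes_\B N$ and $N\otimes_\A M$ are proper (being isomorphic to $X(A)^{\otimes m}$, $X(B)^{\otimes m}$ which are regular as $A,B$ are essential) together with the standard fact that $\phi_{M\otimes N}(a)=\phi_M(a)\otimes 1$ being compact forces $\phi_M(a)$ compact when the correspondences in question are finite-dimensional; concretely $M\cong X(R)$ and $N\cong X(S)$ for the rectangular matrices $R,S$ witnessing $A^m=RS$, $B^m=SR$, so finite-dimensionality makes everything automatic. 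The maps $\Phi_M,\Phi_N$ are unitary isomorphisms by hypothesis, so $[M,\Phi_M]$ and $[N,\Phi_N]$ are genuine $1$-arrows.

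The main obstacle is therefore not any deep computation but rather the bookkeeping of verifying that the data provided by \cite{CDE24} lands in the correct bicategory and that "aligned'' literally unwinds to the $2$-arrow conditions; once that is in place the proposition is essentially a tautology, since every $2$-arrow gives a constant homotopy of $1$-arrows. I would close by remarking that this also explains the parenthetical claim in the surrounding text that the converse holds: a homotopy shift can be evaluated at the endpoint $t=1$ to recover an aligned shift up to isomorphism, hence shift equivalence of the matrices by \cite[Proposition~3.5]{CDE24}, though only the stated direction is needed here.
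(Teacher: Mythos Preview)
Your argument contains a genuine gap. You assert that, by \cite[Lemma~4.2]{CDE24}, the concrete shift coming from a shift equivalence of matrices can be taken to be \emph{aligned}. This is not established anywhere in the paper, and in fact the concluding paragraph of the paper explicitly flags it as open: aligned shift equivalence of $X(A)$ and $X(B)$ is implied by \emph{strong} shift equivalence of $A$ and $B$, and implies shift equivalence, but ``we do not yet know how it is situated between them.'' If your step were valid, then combined with Theorem~6.3 it would immediately give that SE of $A$ and $B$ implies an equivariant $*$-isomorphism $C^*(G_A)\otimes\mathbb{K}\cong C^*(G_B)\otimes\mathbb{K}$, i.e.\ a resolution of the C*-algebraic Hazrat conjecture. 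The citation \cite[Lemma~4.2]{CDE24} is where the \emph{notion} of aligned shift first appears (as a conclusion, with a stronger hypothesis than mere SE), not a statement that every concrete shift can be aligned.

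The paper's proof avoids alignment altogether. Starting from an ordinary concrete shift $(X(R),X(S),\Phi_{X(R)},\Phi_{X(S)},\Psi_{X(A)},\Psi_{X(B)})$, the unitary $\Psi_{X(A)}$ is used merely as a correspondence isomorphism to transport $[X(R)\otimes X(S),\,\Phi_{X(R)}\odot\Phi_{X(S)}]$ to a $1$-arrow of the form $[X(A)^{\otimes m},\Phi]$ for \emph{some} unitary $\Phi$, with no control over whether $\Phi=1_{X(A)^{\otimes m+1}}$. The actual content of the proof is then the observation that the space of such unitaries $\mathcal{U}\bigl(X(A)\otimes X(A)^{\otimes m},\,X(A)^{\otimes m}\otimes X(A)\bigr)$ is homeomorphic to a finite product of finite-dimensional unitary groups (because $A$ is a finite essential matrix and the correspondences are finite-dimensional), hence path-connected. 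A continuous path from $\Phi$ to $1_{X(A)^{\otimes m+1}}$ is then packaged into a homotopy of $1$-arrows $[X(A)^{\otimes m},\Phi]\sim[X(A)^{\otimes m},1_{X(A)^{\otimes m+1}}]$. In other words, homotopy is precisely the device that repairs the failure of alignment; your proposal assumes this repair is unnecessary. Your closing remark about the converse is also incorrect for the same reason: evaluating a homotopy at $t=1$ does \emph{not} produce a $2$-arrow between $[M\otimes N,\Phi_M\odot\Phi_N]$ and $[X(A)^{\otimes m},1]$, only a $2$-arrow from each to the respective fibre of $H$, and these fibres are different.
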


\begin{proof}
Suppose that $A$ and $B$ are shift equivalent via $R$ and $S$. We will show that $X(A)$ and $X(B)$ are homotopy shift equivalent. Let $(X(R),X(S),\Phi_{X(R)},\Phi_{X(S)},\Psi_{X(A)},\Psi_{X(B)})$ be the concrete shift of lag $m$ between $X(A)$ and $X(B)$ induced by $R$ and $S$. We need to show that $[X(R)\otimes X(S), \Phi_{X(R)} \odot \Phi_{X(S)}]$ and $[X(S) \otimes X(R),\Phi_{X(S)} \odot \Phi_{X(R)}]$ are homotopic to $[X(A)^{\otimes m}, 1_{X(A)^{\otimes m+1}}]$ and $[X(B)^{\otimes m}, 1_{X(B)^{\otimes m+1}}]$ respectively. 

It will suffice to establish only one of these homotopies of $1$-arrows, and the other will follow similarly. To show that this homotopy of $1$-arrows exists, it will suffice to show that any $1$-arrow $[X(A)^{\otimes m}, \Phi]$ is homotopic to the identity $1$-arrow $[X(A)^{\otimes m}, 1_{X(A)^{\otimes m+1}}]$. As a topological space of unitary correspondence isomorphisms, $\U(X(A)\otimes X(A)^{\otimes m}, X(A)^{\otimes m} \otimes X(A))$ is homeomorphic to a product of unitary groups on finite dimensional inner product space. More precisely,
$$
\U(X(A)\otimes X(A)^{\otimes m}, X(A)^{\otimes m} \otimes X(A)) \cong \U(X(A)^{\otimes m+1}) \cong \prod_{i,j=1}^{|V|}\U(\mathbb{C}^{A_{i,j}^{m+1}}).
$$
Hence, $\U(X(A)\otimes X(A)^{\otimes m}, X(A)^{\otimes m} \otimes X(A))$ is path connected. Thus, let $U : [0,1] \rightarrow \U(X(A)\otimes X(A)^{\otimes m}, X(A)^{\otimes m} \otimes X(A))$ be a path with $U(0) = 1_{X(A)^{\otimes m+1}}$ and $U(1) = \Phi$. We define a homotopy of $1$-arrows $(H_{U},\Phi^{U}, h_0^{U}, h_1^{U})$ as follows. As a $c(V) - C(I,c(V))$ correspondence we take $H_{U} = C(I, X(A)^{\otimes m})$, and $h_0^{U}, h_1^{U} : X(A)^{\otimes m} \rightarrow X(A)^{\otimes m}$ as merely the identity maps on $X(A)^{\otimes m}$. Finally, we define $\Phi^{U} : X(A) \otimes_{c(V)} C(I,X(A)^{\otimes m}) \rightarrow C(I,X(A)^{\otimes m}) \otimes_{C(I,c(V))} C(I,X(A))$ by the formula $\Phi^{U}_t[f] = U_t[f(t)]$, where we identify $X(A) \otimes_{c(V)} C(I,X(A)^{\otimes m})$ and $C(I,X(A)^{\otimes m}) \otimes_{C(I,c(V))} C(I,X(A))$ with $C(I,X(A)^{\otimes m+1})$. It is clear that $\Phi^{U}_0 = 1_{X(A)^{\otimes m+1}}$ and that $\Phi^{U}_1 = \Phi$, so that the tuple $(H^{U},\Phi^{U}, h_0^{U},h_1^{U})$ is a homotopy equivalence of $1$-arrows. 
\end{proof}

Our next order of business is to show that homotopy shift equivalence is preserved by applying the homomorphism $\mathcal{O}^0$. More precisely,

\begin{prp} \label{prp:funct-homot}
Let $(\A,X)$ and $(\B,Y)$ be objects in $\mathfrak{C}^{\mathbb{N}}_{\mathrm{pr}}$, and let $[F,\Phi_F]$ and $[G,\Phi_G]$ be $1$-arrows from $(\B,Y)$ to $(\A,X)$. If $[F,\Phi_F] \sim [G,\Phi_G]$, then $[\O_{F,\Phi_F}^0, \widehat{\Phi}_F] \sim [\O_{G,\Phi_G}^0, \widehat{\Phi}_G]$.
\end{prp}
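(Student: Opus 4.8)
The plan is to push a homotopy of $1$-arrows through the homomorphism $\O^0$ of Meyer--Sehnem. Suppose $(H,\Phi_H,h_0,h_1)$ is a homotopy of $1$-arrows from $[F,\Phi_F]$ to $[G,\Phi_G]$, so that $[H,\Phi_H]\colon (\B,Y)\leftarrow (C(I,\A),C(I,X))$ is a $1$-arrow with $2$-arrows $h_0\colon [H_0,\Phi_{H_0}]\to[F,\Phi_F]$ and $h_1\colon[H_1,\Phi_{H_1}]\to[G,\Phi_G]$. The candidate homotopy witnessing $[\O^0_{F,\Phi_F},\widehat\Phi_F]\sim[\O^0_{G,\Phi_G},\widehat\Phi_G]$ will be obtained by applying $\O^0$ to $[H,\Phi_H]$. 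The first point to settle is the identification of the domain object: the homomorphism $\O^0$ sends the object $(C(I,\A),C(I,X))$ to $(\O^0_{C(I,X)},\O^1_{C(I,X)})$, and since the Cuntz-Pimsner construction is compatible with the functor $\A\mapsto C(I,\A)$, $F\mapsto C(I,F)$ described in Section~\ref{s:homot-se}, we have a canonical isomorphism $\O^0_{C(I,X)}\cong C(I,\O^0_X)$ and $\O^1_{C(I,X)}\cong C(I,\O^1_X)$, compatibly. Thus $\O^0$ applied to $[H,\Phi_H]$ yields a $1$-arrow $[\O^0_{H,\Phi_H},\widehat\Phi_H]\colon(\O^0_Y,\O^1_Y)\leftarrow (C(I,\O^0_X),C(I,\O^1_X))$, which is precisely the right type of object to be a homotopy of $1$-arrows in $\mathfrak{C}^{\mathbb{N}}_{\mathrm{pr}}$.

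Next I would handle the endpoint evaluations. The key compatibility is that evaluation at $t\in I$ intertwines $\O^0$ with the evaluation $1$-arrow $\ev_t$: precisely, $[\O^0_{H,\Phi_H},\widehat\Phi_H]\otimes \ev_t$ should be canonically isomorphic (via a $2$-arrow) to $[\O^0_{H_t,\Phi_{H_t}},\widehat\Phi_{H_t}]$, where $\ev_t$ on the Cuntz-Pimsner side is the evaluation $1$-arrow $[(\O^0_X)_t,\eta_t]$. This follows from the canonical $2$-arrow \eqref{eq:canonical-iso-bicat-imp} for composition of $1$-arrows under $\O^0$, together with Lemma~\ref{lem:ident-obj-cuntz}-type bookkeeping identifying $\O^0$ applied to the evaluation $1$-arrow $[\A_t,\eta_t]$ with the corresponding evaluation $1$-arrow $[(\O^0_X)_t,\eta_t]$ on the Cuntz-Pimsner level; here one uses uniqueness of the $1$-arrow extension from \cite[Proposition~3.4]{MS19} to avoid explicit computation. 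Granting this, we get $[(\O^0_{H,\Phi_H})_0,\widehat\Phi_{H_0}]\cong[\O^0_{H_0,\Phi_{H_0}},\widehat\Phi_{H_0}]$ and likewise at $t=1$. Now apply the homomorphism $\O^0$ to the $2$-arrows $h_0$ and $h_1$: since $\O^0$ is a homomorphism of bicategories it carries $2$-arrows to $2$-arrows, so we obtain $2$-arrows $[\O^0_{H_0,\Phi_{H_0}},\widehat\Phi_{H_0}]\to[\O^0_{F,\Phi_F},\widehat\Phi_F]$ and $[\O^0_{H_1,\Phi_{H_1}},\widehat\Phi_{H_1}]\to[\O^0_{G,\Phi_G},\widehat\Phi_G]$. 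Composing with the endpoint identifications above yields the required $2$-arrows at the two ends, so $(\O^0_{H,\Phi_H},\widehat\Phi_H$, together with these composed endpoint $2$-arrows$)$ is a homotopy of $1$-arrows from $[\O^0_{F,\Phi_F},\widehat\Phi_F]$ to $[\O^0_{G,\Phi_G},\widehat\Phi_G]$.

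The main obstacle I expect is verifying fullness and properness of $\O^0_{H,\Phi_H}=H\otimes_{C(I,\B)}\O^0_{C(I,X)}$ as a correspondence over $(\O^0_Y,\O^1_{C(I,X)})$ — more precisely, checking that $[H,\Phi_H]$ being a bona fide $1$-arrow in $\mathfrak{C}^{\mathbb{N}}_{\mathrm{pr}}$ (with $H$ full and proper, $C(I,X)$ a full regular correspondence over $C(I,\A)$) is preserved by $\O^0$; but this is exactly the content of \cite[Corollary~4.7]{MS19} applied verbatim, since $C(I,X)$ over $C(I,\A)$ is still full and regular whenever $X$ over $\A$ is. The only genuinely new verification is the commutativity of $\O^0$ with the $C(I,-)$ functor and with evaluation, i.e.\ the canonical identifications $\O^\bullet_{C(I,X)}\cong C(I,\O^\bullet_X)$ respecting the $\mathbb{Z}$-grading and compatible with $\ev_t$. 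This is a routine but slightly tedious check using the universal property of the Cuntz-Pimsner algebra and the fact that $C(I,-)$ is a continuous functor commuting with the relevant tensor products; alternatively one can invoke that the homomorphism $F\mapsto C(I,F)$ of Section~\ref{s:homot-se} and the homomorphism $\O^0$ are natural enough that their composition in either order agrees, which is implicit in the bicategorical formalism of \cite{MS19}. I would state this compatibility as a short lemma and then the proof above is essentially formal.
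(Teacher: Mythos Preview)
Your proposal is correct and follows essentially the same route as the paper: apply the Meyer--Sehnem homomorphism $\O^0$ to the homotopy datum $(H,\Phi_H,h_0,h_1)$, invoke the identification $(\O^0_{C(I,\A),C(I,X)},\O^1_{C(I,\A),C(I,X)})\cong(C(I,\O^0_X),C(I,\O^1_X))$, and read off the resulting homotopy of $1$-arrows. The paper's proof is terser---it simply draws the two diagrams before and after applying $\O^0$ and cites the identification---whereas you spell out the evaluation compatibility $\O^0\circ\ev_t\cong\ev_t\circ\O^0$ more explicitly; but this is the same argument. One small slip: in your last paragraph you write $\O^0_{H,\Phi_H}=H\otimes_{C(I,\B)}\O^0_{C(I,X)}$, but since $H$ is a $\B$--$C(I,\A)$ correspondence the tensor should be over $C(I,\A)$.
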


\begin{proof}
Recall that for any object $(\C,Z)$ and $t\in I$ we have the $1$-arrow $\ev_t:= [\C_t,\eta_t] : (C(I,\C),C(I,Z)) \leftarrow (\C,Z)$, where $\eta_t$ is the canonical unitary correspondence isomorphism $C(I,Z) \otimes_{C(I,\C)} \C_t \rightarrow \C_t \otimes_{\C} Z$. 

By definition of homotopy between $[F,\Phi_F]$ and $[G,\Phi_G]$ there is a $1$-arrow $[H,\Phi_H] : (\A,X) \leftarrow (C(I,\B), C(I,Y))$ together with $2$-arrows $h_0 : [H_0, \Phi_{H_0}] \rightarrow [F,\Phi_F]$ and $h_1 : [H_1, \Phi_{H_1}] \rightarrow [G,\Phi_G]$, where by definition $[H_0,\Phi_{H_0}] = [H, \Phi_H] \otimes \ev_0$ and $[H_1,\Phi_{H_1}] = [H, \Phi_H] \otimes \ev_1$. This can be described by the following diagram,
\[
            \begin{tikzcd}
                  {(\A, X)} &  & {(C(I, \B), C(I, Y))} \arrow[ll, "{[H, \Phi_{H}]}"]   &  & {(\B, Y)}. \arrow[ll, "\ev_1", bend right] \arrow[ll, "\ev_0", bend left] \arrow[llll, "{[G,\Phi_G]}"', bend right=49, ""name=UG] \arrow[llll, "{[F, \Phi_F]}", bend left=49, ""name=BF] \\
                  \arrow[from=1-3, to=UG, "h_1", Rightarrow]
                  \arrow[from=1-3, to=BF, "h_0"', Rightarrow]
            \end{tikzcd}
      \]
Thus, the homomorphism $\mathfrak{C}^{\mathbb{N}}_{\mathrm{pr}} \rightarrow \mathfrak{C}^{\mathbb{N}}_{\mathrm{pr}, *}$ from \cite[Corollary 4.7]{MS19} yields the following diagram,
\[
            \begin{tikzcd}
                  {(\O_{\A, X}^0, \O_{\A, X}^1)} &  & {(C(I, \O_{\B, Y}^0), C(I, \O_{\B, Y}^1))} \arrow[ll, "{[\O^0_{H, \Phi_{H}}, \widehat{\Phi}_{H}]}"]   &  & {(\O_{\B, Y}^0, \O_{\B, Y}^1)}. \arrow[ll, "\ev_1", bend right] \arrow[ll, "\ev_0", bend left] \arrow[llll, "{[\O^0_{G,\Phi_G}, \widehat{\Phi}_G]}"', bend right=49, ""name=UG] \arrow[llll, "{[\O^0_{F,\Phi_F}, \widehat{\Phi}_F]}", bend left=49, ""name=BF] \\
                  \arrow[from=1-3, to=UG, "\O^0(h_1)", Rightarrow]
                  \arrow[from=1-3, to=BF, "\O^0(h_0)"', Rightarrow]
            \end{tikzcd}
      \]
Hence, due to the identification $(\O_{C(I, \B), C(I, Y)}^0, \O_{C(I, \B), C(I, Y)}^1) \cong (C(I, \O_{\B, Y}^0), C(I, \O_{\B, Y}^1))$ we see that the above diagram yields a homotopy between $[\O^0_{F,\Phi_F}, \widehat{\Phi}_F]$ and $[\O^0_{G,\Phi_G}, \widehat{\Phi}_G]$
\end{proof}

We now get that if $(\A,X)$ and $(\B,Y)$ are C*-correspondences that are homotopy shift equivalent, then $(\O_X^0,\O_X^1)$ and $(\O_Y^0,\O_Y^1)$ are also homotopy shift equivalent.

\begin{thm} \label{thm:funct-cuntz}
Let $(\A,X)$ and $(\B,Y)$ be objects in $\mathfrak{C}^{\mathbb{N}}_{\mathrm{pr}}$. If $(\A,X)$ and $(\B,Y)$ are aligned / homotopy shift equivalent, then $(\O_X^0,\O_X^1)$ and $(\O_Y^0,\O_Y^1)$ are aligned / homotopy shift equivalent respectively.
\end{thm}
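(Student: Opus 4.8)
The plan is to prove both assertions at once by transporting the defining data of an (aligned, resp.\ homotopy) shift equivalence through the homomorphism $\mathcal{O}^0 \colon \mathfrak{C}^{\mathbb{N}}_{\mathrm{pr}} \to \mathfrak{C}^{\mathbb{N}}_{\mathrm{pr},*}$ of \cite[Corollary 4.7]{MS19}. The two substantive ingredients are already in hand: Proposition~\ref{prp:funct-homot}, which says $\mathcal{O}^0$ preserves the homotopy relation between $1$-arrows, and Lemma~\ref{lem:ident-obj-cuntz}, which identifies $\mathcal{O}^0$ of a power $1$-arrow $[X^{\otimes m},1_{X^{\otimes m+1}}]$ with the power $1$-arrow $[\O_X^m,1_{\O_X^{m+1}}] = [(\O_X^1)^{\otimes m},1_{(\O_X^1)^{\otimes m+1}}]$ on the image object (using the identification $(\O_X^1)^{\otimes m}\cong\O_X^m$ noted before that lemma). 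The canonical $2$-arrow \eqref{eq:canonical-iso-bicat-imp}, expressing that $\mathcal{O}^0$ respects composition of $1$-arrows, will glue these together. The aligned case is obtained by the same chain of identifications with "homotopic" replaced everywhere by "connected by a $2$-arrow", since a homomorphism of bicategories also sends $2$-arrows to $2$-arrows.

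Concretely, suppose $(M,N,\Phi_M,\Phi_N)$ is a concrete homotopy shift of lag $m$ between $(\A,X)$ and $(\B,Y)$, so that $[M,\Phi_M]\colon(\A,X)\leftarrow(\B,Y)$ and $[N,\Phi_N]\colon(\B,Y)\leftarrow(\A,X)$ are $1$-arrows with $[M\otimes N,\Phi_M\odot\Phi_N]\sim[X^{\otimes m},1_{X^{\otimes m+1}}]$ and $[N\otimes M,\Phi_N\odot\Phi_M]\sim[Y^{\otimes m},1_{Y^{\otimes m+1}}]$. I claim that $(\O^0_{M,\Phi_M},\O^0_{N,\Phi_N},\widehat{\Phi}_M,\widehat{\Phi}_N)$ is a concrete homotopy shift of lag $m$ between $(\O_X^0,\O_X^1)$ and $(\O_Y^0,\O_Y^1)$. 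Since $\mathcal{O}^0$ takes $1$-arrows to $1$-arrows, $[\O^0_{M,\Phi_M},\widehat{\Phi}_M]$ and $[\O^0_{N,\Phi_N},\widehat{\Phi}_N]$ are $1$-arrows between the correct objects. Applying $\mathcal{O}^0$ to the first homotopy and using Proposition~\ref{prp:funct-homot} gives $[\O^0_{M\otimes N,\Phi_M\odot\Phi_N},\widehat{\Phi}_{M\otimes N}]\sim[\O^0_{X^{\otimes m},1_{X^{\otimes m+1}}},\widehat{1}_{X^{\otimes m+1}}]$. By \eqref{eq:canonical-iso-bicat-imp} the left side is isomorphic, via a $2$-arrow, to $[\O^0_{M,\Phi_M},\widehat{\Phi}_M]\otimes[\O^0_{N,\Phi_N},\widehat{\Phi}_N]$, and by Lemma~\ref{lem:ident-obj-cuntz} the right side is isomorphic to $[(\O_X^1)^{\otimes m},1_{(\O_X^1)^{\otimes m+1}}]$. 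As any $2$-arrow yields a constant homotopy of $1$-arrows and homotopy of $1$-arrows is an equivalence relation, it follows that $[\O^0_{M,\Phi_M},\widehat{\Phi}_M]\otimes[\O^0_{N,\Phi_N},\widehat{\Phi}_N]\sim[(\O_X^1)^{\otimes m},1_{(\O_X^1)^{\otimes m+1}}]$, and symmetrically $[\O^0_{N,\Phi_N},\widehat{\Phi}_N]\otimes[\O^0_{M,\Phi_M},\widehat{\Phi}_M]\sim[(\O_Y^1)^{\otimes m},1_{(\O_Y^1)^{\otimes m+1}}]$, which is precisely what we wanted. For the aligned case one starts instead from aligning $2$-arrows $\Psi_X\colon[M\otimes N,\Phi_M\odot\Phi_N]\to[X^{\otimes m},1_{X^{\otimes m+1}}]$ and $\Psi_Y$, applies $\mathcal{O}^0$, and composes the resulting $2$-arrows with those of \eqref{eq:canonical-iso-bicat-imp} and Lemma~\ref{lem:ident-obj-cuntz}.

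I do not expect a genuine obstacle here: the work has been front-loaded into Proposition~\ref{prp:funct-homot} and Lemma~\ref{lem:ident-obj-cuntz}, and what remains is to verify that the various canonical identifications compose coherently. The one point worth a careful sentence is that $\mathcal{O}^0$ of the composite $1$-arrow $[M,\Phi_M]\otimes[N,\Phi_N]$ agrees up to a canonical $2$-arrow with $[\O^0_{M,\Phi_M},\widehat{\Phi}_M]\otimes[\O^0_{N,\Phi_N},\widehat{\Phi}_N]$ — this is exactly the content of $\mathcal{O}^0$ being a homomorphism of bicategories, recorded in \eqref{eq:canonical-iso-bicat-imp} — and, correspondingly, that $\mathcal{O}^0$ carries the power $1$-arrow on $(\A,X)$ to the power $1$-arrow on $(\O_X^0,\O_X^1)$, which is Lemma~\ref{lem:ident-obj-cuntz}. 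With these in place the proof is a short diagram chase, structurally parallel to the proof of Proposition~\ref{prp:transitivity-mod-asr}.
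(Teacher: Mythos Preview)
Your proposal is correct and follows essentially the same approach as the paper's proof: both argue by applying the homomorphism $\mathcal{O}^0$ to the data of the (aligned/homotopy) shift, invoking \eqref{eq:canonical-iso-bicat-imp} to identify $\mathcal{O}^0$ of the composite with the composite of the images, Lemma~\ref{lem:ident-obj-cuntz} to identify $\mathcal{O}^0$ of the power $1$-arrow, and Proposition~\ref{prp:funct-homot} (resp.\ functoriality on $2$-arrows) to transport the homotopy (resp.\ the aligning $2$-arrows). The paper's write-up is slightly more terse but the logical content is the same.
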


\begin{proof}
That aligned shift equivalence of $(\A,X)$ and $(\B,Y)$ implies that of $(\O_X^0,\O_X^1)$ and $(\O_Y^0,\O_Y^1)$ follows from the discussion preceding Lemma \ref{lem:ind-iso-mor}, equation \eqref{eq:canonical-iso-bicat-imp} and Lemma \ref{lem:ident-obj-cuntz}. That is, if $[M,\Phi_M]$ and $[N,\Phi_N]$ define a concrete aligned shift of lag $m$, then the $1$-arrows $[\O^0_{M,\Phi_M},\widehat{\Phi_M}]$ and $[\O^0_{N,\Phi_N},\widehat{\Phi_N}]$ define a concrete aligned shift of lag $m$ between $(\O_X^0,\O_X^1)$ and $(\O_Y^0,\O_Y^1)$.

As for homotopy shift equivalence, suppose $[M,\Phi_M]$ and $[N,\Phi_N]$ define a homotopy shift of lag $m$ between $(\A,X)$ and $(\B,Y)$. Hence, we have the homotopies $[M,\Phi_M]\otimes [N,\Phi_N] \sim [X^{\otimes m}, 1_{X^{\otimes m+1}}]$ and $[N,\Phi_N]\otimes [M,\Phi_M] \sim [Y^{\otimes m}, 1_{Y^{\otimes m+1}}]$. By equation \eqref{eq:canonical-iso-bicat-imp}, Proposition \ref{prp:funct-homot} and Lemma \ref{lem:ident-obj-cuntz} we get that 
$$
[\O^0_{M,\Phi_M},\widehat{\Phi}_M] \otimes [\O^0_{N,\Phi_N}, \widehat{\Phi}_N] \sim [\O^0_{X^{\otimes m}, 1_{X^{\otimes m+1}}}, \widehat{1}_{X^{\otimes m+1}}] \cong [\O_X^m, 1_{\O_X^{m+1}}]
$$ 
and 
$$
[\O^0_{N,\Phi_N},\widehat{\Phi}_N] \otimes [\O^0_{M,\Phi_M}, \widehat{\Phi}_M] \sim [\O^0_{Y^{\otimes m}, 1_{Y^{\otimes m+1}}}, \widehat{1}_{Y^{\otimes m+1}}] \cong [\O_Y^m, 1_{\O_Y^{m+1}}].
$$ 
Since $(\O_X^1)^{\otimes m} \cong \O_X^m$ and $(\O_Y^1)^{\otimes m} \cong \O_Y^m$, we obtain a homotopy shift of lag $m$ between $(\O_X^0,\O_X^1)$ and $(\O_Y^0,\O_Y^1)$ as required.
\end{proof}

\begin{rmk}
Theorem \ref{thm:funct-cuntz} can be regarded as a corrected version of \cite[Theorem 5.8]{KK-JFA2014} (see their erratum). The mistake in the proof of \cite[Theorem 5.8]{KK-JFA2014} was explained in great detail in \cite{CDE24}, and now Theorem \ref{thm:funct-cuntz} provides a way of fixing it up to homotopy equivalence. In the proof of \cite[Theorem 5.8]{KK-JFA2014} the $1$-arrows $[M,\Phi_M]$ and $[N,\Phi_N]$ were merely assumed to satisfy $M\otimes_{\B}N \cong X^{\otimes m}$ and $N \otimes_{\A} M \cong Y^{\otimes m}$ without reference to $\Phi_M$ and $\Phi_N$. Although there are some maps $\Phi_X$ and $\Phi_Y$ satisfying $[M,\Phi_M] \otimes [N,\Phi_N] \cong [X^{\otimes m}, \Phi_X]$ and $[N,\Phi_N] \otimes [M,\Phi_M] \cong [Y^{\otimes m}, \Phi_Y]$, in general these maps $\Phi_X$ and $\Phi_Y$ cannot both be chosen to be $1_{X^{\otimes m+1}}$ and $1_{Y^{\otimes m+1}}$ up to $2$-arrows. Hence, $\mathcal{O}^0_{X^{\otimes m}, \Phi_X}$ or $\mathcal{O}^0_{Y^{\otimes m}, \Phi_Y}$ may fail to be unitarily isomorphic to $\mathcal{O}_X^m$ or $\mathcal{O}_Y^m$ respectively, and the C*-correspondences $\mathcal{O}^0_{M,\Phi_M}$ and $\mathcal{O}^0_{N,\Phi_N}$ will then fail to yield a shift equivalence between $(\O_X^0,\O_X^1)$ and $(\O_Y^0,\O_Y^1)$.
\end{rmk}

\section{Correspondences arising from $*$-homomorphisms} \label{s:cor-hom}

In this section we describe certain correspondences over stabilized C*-algebras as homomorphism correspondences. This viewpoint allows us to interpret tensor product of correspondences as mere composition of $*$-homomorphisms, and construct $*$-homomorphisms between crossed products by $\mathbb{Z}$. This will be important later on, because up to equivariant $*$-isomorphism Cuntz-Pimsner C*-algebras of imprimitivity bimodules are crossed products.

Let $\A$ and $\B$ be C*-algebras. By \cite[Theorem~2.2]{Lan95}, there exists an isomorphism $\lambda_\B$ from $\M(\B)$ to $\L(\B)$ defined by $\lambda_\B(b)= \mu_b$ with $\mu_b \in \L(\B)$ the left action by the multiplier $b\in \M(\B)$. Furthermore, we have $\lambda_\B(\B) = \K(\B)$ and thus $\lambda_\B|_{\B} \colon B \to \K(\B)$ is a $*$-isomorphism. Now, let $f : \A \rightarrow \B$ be a \emph{non-degenerate} $*$-homomorphism in the sense that $f(\A)\B$ is dense in $\B$. Then $f$ induces a correspondence structure on $\B$ as a right Hilbert C*-module over itself by defining a left action through $f$ via $a\cdot b := f(a) b$. We will denote the resulting correspondence by $_f\B$. We now argue that every proper $\A-\B$-correspondence that is unitarily isomorphic to $\B$ as a right Hilbert C*-module arises in this way.

Recall that in this paper we consider non-degenerate C*-correspondences. Thus, suppose $X$ is a (non-degenerate) proper $\A-\B$ correspondence such that $X$ is unitarily isomorphic to $\B$ as a right Hilbert C*-module. Since $X$ is a non-degenerate proper $\A-\B$ correspondence, its left action induces a non-degenerate $*$-homomorphism from $\A$ to $\K(X)$. Let $\psi  \colon X \to \B$ be a unitary right Hilbert C*-module isomorphism.  Then $\mathrm{Ad}_{\psi}$ is a $*$-isomorphism from $\K(X)$ to $\K(\B)$.  Define $f \colon \A \rightarrow \B$ as the composition 
$$
\A \longrightarrow \K(X) \overset{\mathrm{Ad}_{\psi} }{\longrightarrow} \K(\B) \overset{(\lambda_\B|_{\B})^{-1}}{\longrightarrow} \B,
$$
which is clearly non-degenerate. A straightforward computation shows that the unitary right Hilbert C*-module isomorphism $\psi \colon X \to {}_f\B$ becomes a unitary correspondence isomorphism.

\begin{lem}\label{l:Psi-fg}
  Let $f\colon \A \to \B$ and $g\colon \B \to \C$ be two non-degenerate $*$-homomorphisms. 
  Then, there is a unitary correspondence isomorphism $\Psi = \Psi_{f,g}: {}_f\B \otimes_{\B} {}_g\C \cong {}_{g\circ f}\C$ given by $\Psi(b\otimes c) = g(b)c \in {}_{g\circ f}\C$ for all $b\in \B$ and $c\in \C$.
  The inverse of $\Psi$ is given for all $c\in {}_{g\circ f} \C$ by $\Psi^{-1}(c) = \lim_{\lambda\in \Lambda} e_{\lambda} \otimes c$ for any approximate identity $\{e_{\lambda}\}_{\lambda\in \Lambda}$ in $\B$.
\end{lem}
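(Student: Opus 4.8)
The plan is to verify directly that the prescribed formula $\Psi(b\otimes c) = g(b)c$ defines a well-defined map on the balanced tensor product, that it is an isometry preserving the $\C$-valued inner product and the bimodule structure, that it is surjective, and finally that the stated formula gives its inverse.

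First I would check that $\Psi$ is well-defined on the algebraic tensor product ${}_f\B \odot_{\B} {}_g\C$ and descends to the quotient defining ${}_f\B \otimes_{\B} {}_g\C$. The map $b\otimes c \mapsto g(b)c$ is clearly bilinear over $\bbC$; to see it respects the $\B$-balancing relation, note that for $b\in \B$, $c\in \C$, $b'\in \B$ we have $bb' \otimes c$ and $b\otimes b'\cdot c = b \otimes g(b')c$ mapping to $g(bb')c = g(b)g(b')c$ in both cases, since $g$ is a homomorphism and the left action of $\B$ on ${}_g\C$ is via $g$. Next, $\Psi$ intertwines the inner products: using that the $\C$-valued inner product on ${}_f\B \otimes_{\B} {}_g\C$ is $\langle b\otimes c, b'\otimes c'\rangle = \langle c, \langle b, b'\rangle_{\B}\cdot c'\rangle_{\C} = \langle c, g(b^*b')c'\rangle_{\C} = c^* g(b^*b') c' = c^* g(b)^* g(b') c' = \langle g(b)c, g(b')c'\rangle_{\C}$, which is exactly $\langle \Psi(b\otimes c), \Psi(b'\otimes c')\rangle_{\C}$. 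Since the inner product on the interior tensor product is only a priori a semi-inner product, this computation simultaneously shows $\Psi$ is isometric for the induced seminorm and hence factors through the separated completion, giving an isometric map on ${}_f\B \otimes_{\B} {}_g\C$; in particular the balanced tensor product has no nonzero null vectors relevant here. Preservation of the bimodule structure is immediate: on the left, $a\cdot(b\otimes c) = f(a)b\otimes c \mapsto g(f(a)b)c = (g\circ f)(a)g(b)c = a\cdot \Psi(b\otimes c)$ in ${}_{g\circ f}\C$; on the right, $\Psi((b\otimes c)c'') = \Psi(b\otimes cc'') = g(b)cc'' = \Psi(b\otimes c)c''$.

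For surjectivity, I would use non-degeneracy of $g$: since $g(\B)\C$ is dense in $\C$, the image of $\Psi$ contains the dense subspace $\{g(b)c : b\in \B, c\in \C\}$, and since $\Psi$ is isometric its range is closed, hence $\Psi$ is onto ${}_{g\circ f}\C$. Thus $\Psi$ is a unitary correspondence isomorphism. Finally, to identify the inverse, let $\{e_{\lambda}\}$ be an approximate identity of $\B$; for $c\in {}_{g\circ f}\C$ the net $e_{\lambda}\otimes c$ is Cauchy in ${}_f\B\otimes_{\B}{}_g\C$ because $\Psi(e_{\lambda}\otimes c) = g(e_{\lambda})c \to c$ by non-degeneracy of $g$ (so $g(e_{\lambda})$ is an approximate identity for the action on $\C$), and $\Psi$ is isometric; hence $\lim_{\lambda} e_{\lambda}\otimes c$ exists and $\Psi$ of it is $c$, so $\Psi^{-1}(c) = \lim_{\lambda} e_{\lambda}\otimes c$.

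The computations here are all routine; the only point requiring a little care is the interplay between the \emph{semi}-inner product on the algebraic balanced tensor product and the separated completion — one must phrase the inner-product computation so that it proves well-definedness on the completion rather than assuming it — but this is standard for interior tensor products of Hilbert modules. I expect no genuine obstacle; the main thing to get right is simply being careful that non-degeneracy of $g$ (not of $f$) is what is used for both surjectivity and the existence of the limit defining $\Psi^{-1}$.
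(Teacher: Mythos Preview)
Your proof is correct and follows essentially the same approach as the paper's: verify the inner-product identity to get isometry, check the left $\A$-module map property, and use non-degeneracy of $g$ together with an approximate identity in $\B$ to obtain surjectivity and the inverse formula. You are slightly more explicit about well-definedness on the balanced tensor product and about the right $\C$-module structure, but the argument is otherwise the same.
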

\begin{proof}
  The map $\Psi$ is correctly defined by functoriality of the internal tensor product. We also have 
\[
  \langle \Psi(b\otimes c), \Psi(b'\otimes c') \rangle = \langle g(b)c, g(b')c' \rangle = c^* g(b^* b) c' = \langle c, \langle b, b \rangle c'\rangle = \langle b\otimes c, b'\otimes c' \rangle
\]
for all $b, b' \in {}_f\B$ and $c, c' \in {}_g\C$.  
Hence, $\Psi$ is an isometric Hilbert $\C$-module map.
Moreover, we have $a\cdot \Psi(b\otimes c) = g(f(a))g(b)c = g(f(a)b)c = \Psi(a\cdot b \otimes c)$ for all $a\in \A$, $b\in \B$ and $c\in \C$. Therefore, $\Psi$ is also a left module map.

Thus, to show that $\Psi$ is a unitary correspondence isomorphism, we need only show that $\Psi$ is surjective. Let $c\in {}_{g\circ f}\C$ be an arbitrary element and fix some approximate identity $\{e_{\lambda}\}_{\lambda\in \Lambda}$ in $\B$. Since $g$ is non-degenerate, $g(e_{\lambda})$ is an approximate identity in $\C$, so that
\[
  \lim_{\lambda\in \Lambda} \Psi(e_{\lambda}\otimes c) = \lim_{\lambda\in \Lambda} g(e_{\lambda})c = c.
\]  
Thus, the formula for $\Psi^{-1}$ defines the inverse of $\Psi$ and we get that $\Psi$ is a unitary correspondence isomorphism.
\end{proof}

Let $u$ be a unitary in the multiplier algebra of $\B$.  Define $M_u \colon \B \to \B$ by $M_u(b)=ub$. It is clear that $M_u$ is a unitary right module isomorphism.  Now, if $f,g : \A\rightarrow \B$ are two non-degenerate $*$-homomorphisms and $u$ is a unitary in the multiplier algebra of $\B$ such that $f(a) = u^*g(a)u$ for all $a\in \A$, then the unitary right isomorphism $M_u : {}_f\B \rightarrow {}_g\B$ is a unitary correspondence isomorphism. Indeed, since $a\cdot M_u(b) = g(a)ub = uf(a)b = M_u(a\cdot b)$ for all $a\in \A$ and $b\in \B$, we see that $M_u$ is a unitary correspondence isomorphism.  

We next show that $M_u : {}_f\B \rightarrow {}_g\B$ is a unitary correspondence isomorphism exactly when $u$ intertwines $f$ and $g$.

\begin{lem}\label{l:hom-u}
Let $f,g : \A \rightarrow \B$ be two non-degenerate $*$-homomorphisms and let $u$ be a unitary in the multiplier algebra of $\B$.  Then $M_u \colon {}_f \B \to {}_g \B$ is a unitary correspondence isomorphism if and only if $f(a) = u^*g(a)u$ for all $a\in \A$.
\end{lem}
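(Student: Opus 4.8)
The plan is to note that one implication is already established: the paragraph immediately preceding the statement checks that if $f(a) = u^{*}g(a)u$ for all $a \in \A$, then the right module isomorphism $M_u$ is in fact a unitary correspondence isomorphism. So the only thing left to prove is the converse, and the whole content is the intertwining of the left actions, since $M_u$ is already a unitary right Hilbert $\B$-module isomorphism by construction.

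For the converse I would argue as follows. Assume $M_u \colon {}_f\B \to {}_g\B$ is a unitary correspondence isomorphism; in particular it intertwines the left $\A$-actions, so $M_u(a\cdot b) = a\cdot M_u(b)$ for all $a\in\A$, $b\in\B$. Unwinding the two correspondence structures (where $a$ acts on ${}_f\B$ by $b\mapsto f(a)b$ and on ${}_g\B$ by $b\mapsto g(a)b$), this identity reads $u\,f(a)\,b = g(a)\,u\,b$ for all $a\in\A$ and all $b\in\B$. Now fix an approximate identity $\{e_\lambda\}_{\lambda\in\Lambda}$ for $\B$ and substitute $b = e_\lambda$: since $f(a)\in\B$ and $g(a)u\in\B$, the products $u f(a) e_\lambda$ and $g(a)u e_\lambda$ converge in norm to $u f(a)$ and $g(a)u$ respectively, whence $u f(a) = g(a) u$ as elements of $\B\subseteq\M(\B)$. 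Left-multiplying this equality by the unitary $u^{*}\in\M(\B)$ gives $f(a) = u^{*}g(a)u$ for every $a\in\A$, as desired.

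I do not expect any real obstacle here; the argument is routine. The only step worth flagging is the passage from the pointwise identity $u f(a) b = g(a) u b$ valid for all $b\in\B$ to the multiplier-level identity $u f(a) = g(a) u$, which is exactly where the non-degeneracy of $\B$ (equivalently, the existence of an approximate identity) is used. Everything else is bookkeeping with the definitions of ${}_f\B$ and ${}_g\B$.
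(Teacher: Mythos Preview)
Your proof is correct and follows essentially the same route as the paper: both derive $u f(a) b = g(a) u b$ for all $b\in\B$ from the left-module condition and then cancel $b$. The only difference is cosmetic---you spell out the cancellation via an approximate identity, whereas the paper simply writes ``Consequently, $u f(a) = g(a)u$'' (which is justified since $uf(a), g(a)u\in\B$ and $xb=yb$ for all $b\in\B$ forces $x=y$).
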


\begin{proof}
We have already seen that if $f(a) = u^*g(a)u$ for all $a\in \A$, then $M_u \colon {}_f \B \to {}_g \B$ is a unitary correspondence isomorphism, so we prove the converse. Assume $M_u \colon {}_f \B \to {}_g \B$ is a unitary correspondence isomorphism.  Then for all $a \in \A$ and for all $b \in \B$,
$$
u f(a) b = M_u ( a \cdot b ) = a \cdot M_u(b) = g(a) ub.
$$
Consequently, $u f(a) = g(a)u$ for all $a \in A$ which implies $u f(a) u^* = g(a)$ for all $a \in \A$.
\end{proof}

Our next step is to show that every unitary correspondence isomorphism from ${}_f \B$ to ${}_g \B$ is equal to $M_u$ for some unitary $u$ in the multiplier algebra of $\B$ satisfying the intertwiner condition.

\begin{lem} \label{l:hom-cor-iso}
Let $f,g : \A \rightarrow \B$ be two non-degenerate $*$-homomorphisms. If $\Phi : {}_f\B \rightarrow {}_g\B$ is a unitary correspondence isomorphism. Then there exists $u$ in the multiplier algebra of $\B$ such that $\Phi = M_u$. Hence, ${}_f\B$ and ${}_g\B$ are unitarily isomorphic if and only if $f$ and $g$ are inner conjugate.
\end{lem}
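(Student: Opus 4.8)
The plan is to produce the multiplier $u$ directly from $\Phi$ using the identification of $\M(\B)$ with $\L(\B)$ via $\lambda_\B$, and then invoke Lemma \ref{l:hom-u} to get the intertwining condition for free. Concretely, $\Phi \colon {}_f\B \to {}_g\B$ is in particular an adjointable map of the underlying right Hilbert $\B$-module $\B$ (the right module structure is the same on both sides), so $\Phi \in \L(\B)$. The first step is therefore to set $u := \lambda_\B^{-1}(\Phi) \in \M(\B)$, so that $\Phi(b) = ub$ for all $b \in \B$; I should check that $\Phi$ being a \emph{unitary} Hilbert module isomorphism forces $u$ to be a unitary in $\M(\B)$, which follows since $\lambda_\B$ is a $*$-isomorphism of C*-algebras and carries adjoints and the identity correctly, so $u^*u = \lambda_\B^{-1}(\Phi^*\Phi) = \lambda_\B^{-1}(\id) = 1$ and likewise $uu^* = 1$. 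This gives $\Phi = M_u$.

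The second step is to extract the intertwiner relation. Since $\Phi = M_u$ is by hypothesis a unitary correspondence isomorphism ${}_f\B \to {}_g\B$ — not merely a right module isomorphism — Lemma \ref{l:hom-u} applies verbatim and yields $f(a) = u^* g(a) u$ for all $a \in \A$. Alternatively one can repeat the one-line computation: for $a \in \A$, $b \in \B$ one has $u f(a) b = \Phi(a\cdot b) = a \cdot \Phi(b) = g(a) u b$, and non-degeneracy of $\B$ as a module over itself (i.e.\ density of $\B \cdot \B$, or simply running an approximate identity through $b$) gives $u f(a) = g(a) u$, hence $f(a) = u^* g(a) u$.

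For the final ``hence'' clause, the equivalence of unitary isomorphism of ${}_f\B$ and ${}_g\B$ with inner conjugacy of $f$ and $g$ (meaning there is a unitary $u \in \M(\B)$ with $f(\cdot) = u^* g(\cdot) u$): the forward direction is exactly what the first two steps establish — any unitary correspondence isomorphism is of the form $M_u$ with $u$ intertwining — and the reverse direction is precisely the computation recorded just before the statement of Lemma \ref{l:hom-u}, namely that if $f(a) = u^* g(a) u$ then $M_u \colon {}_f\B \to {}_g\B$ is a unitary correspondence isomorphism.

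I do not expect a genuine obstacle here; the only point requiring a little care is making sure the right Hilbert $\B$-module structures on ${}_f\B$ and ${}_g\B$ are literally identical (they are, since $f$ and $g$ only affect the left action), so that ``unitary correspondence isomorphism'' does restrict to ``unitary element of $\L(\B)$'' and the passage to $\M(\B)$ through $\lambda_\B$ is legitimate. Everything else is a direct consequence of the preceding two lemmas and the discussion of $\lambda_\B$.
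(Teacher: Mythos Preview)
Your proposal is correct and follows essentially the same approach as the paper: both set $u := \lambda_\B^{-1}(\Phi)$, use that $\lambda_\B$ is a $*$-isomorphism to see $u$ is a unitary multiplier with $\Phi = M_u$, and then invoke Lemma~\ref{l:hom-u} to obtain the intertwining relation and the final equivalence. The only cosmetic difference is that the paper verifies $\Phi = M_u$ via the computation $\lambda_\B(M_u(b)) = \lambda_\B(u)\circ \lambda_\B(b) = \Phi\circ \mu_b = \mu_{\Phi(b)} = \lambda_\B(\Phi(b))$, whereas you read $\Phi(b) = ub$ off directly from the definition of $\lambda_\B$.
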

\begin{proof}
Consider the isomorphism $\lambda_\B \colon \M(\B) \to \L(\B)$.  Since $\Phi$ is also a unitary right $\B$-module isomorphism, $\Phi$ is a unitary in $\L(\B)$.  Then $u = \lambda_\B^{-1}(\Phi)$ is a unitary in $\M(\B)$.  

Let $b \in \B$.  To show that $\Phi(b) = M_u(b)$, we will instead show that $\lambda_\B( \Phi(b) ) = \lambda_\B( M_u(b))$. Consider the following,
$$
\lambda_\B( M_u(b))= \lambda_\B (ub) = \lambda_\B(u) \circ \lambda_\B(b) = \Phi \circ \mu_b = \mu_{ \Phi(b) } = \lambda_\B( \Phi(b)). 
$$
Thus, $\Phi = M_u$. By Lemma~\ref{l:hom-u}, as $\Phi$ is a left $\A$ module map, $u f(a) u^* = g(a)$ for all $a \in \A$. Thus, we see that $_f\B$ and $_g\B$ are unitarily isomorphic as C*-correspondences if and only if $f$ and $g$ are inner conjugate.
\end{proof}

\begin{lem} \label{lem:induce-*hom}
Let $\A$ and $\B$ be \emph{stable} and $\sigma$-unital C*-algebras. Suppose that $F$ is a full and proper $\A-\B$ correspondence.
Then $F$ is isomorphic to $_f\B$ for some non-degenerate $*$-homomorphism $f : \A\rightarrow \B$.
\end{lem}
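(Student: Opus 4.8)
The plan is to reduce the assertion to the statement that $F$ is isomorphic to $\B$ \emph{as a right Hilbert $\B$-module}. Granting this, the discussion preceding Lemma~\ref{l:Psi-fg} applies directly to $F$ (which is proper and non-degenerate by our standing hypotheses) and yields a non-degenerate $*$-homomorphism $f\colon\A\to\B$ together with a unitary correspondence isomorphism $F\cong{}_f\B$. So the whole content lies in the right-module statement, and it is here that the hypotheses on $\A$ and $\B$ being stable and $\sigma$-unital are used.

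I would first note that $F$ is countably generated. By properness the left action $\phi_F\colon\A\to\L(F)$ lands in $\K(F)$, and by non-degeneracy of the correspondence it is a non-degenerate $*$-homomorphism into $\K(F)$, that is, $\overline{\phi_F(\A)\K(F)}=\K(F)$. Choosing a strictly positive element $h\in\A$ (available since $\A$ is $\sigma$-unital), and using that $\phi_F$ has closed range and $\overline{h\A}=\A$, one checks that $\phi_F(h)$ is strictly positive in $\K(F)$; hence $\K(F)$ is $\sigma$-unital, which is equivalent to $F$ being countably generated. The next and key step is to show $\K(F)$ is \emph{stable}, and this is where I would use that $\A$ is stable. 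Being non-degenerate, $\phi_F$ extends to a unital strictly continuous $*$-homomorphism $\overline{\phi_F}\colon\M(\A)\to\M(\K(F))=\L(F)$. A fixed isomorphism $\A\cong\A\otimes\bbK$ supplies a system of matrix units $(v_{ij})_{i,j\in\bbN}$ in $\M(\A)$ with $\sum_i v_{ii}=1$ converging strictly, and then $(w_{ij}):=(\overline{\phi_F}(v_{ij}))$ is a system of matrix units in $\L(F)$ with $\sum_i w_{ii}=\overline{\phi_F}(1)=1$ converging strictly. By the standard matrix-unit criterion for stability this gives $\K(F)\cong w_{11}\K(F)w_{11}\otimes\bbK$, so $\K(F)$ is stable.

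At this point $F$ is a full, countably generated Hilbert module over the $\sigma$-unital C*-algebra $\B$ with $\K(F)$ stable, so the stabilization theorem for Hilbert modules gives $F\cong\H_\B$, the standard Hilbert $\B$-module $\bigoplus_{n\in\bbN}\B$. Applying the same theorem to $\B$ itself — which is full, countably generated since $\B$ is $\sigma$-unital, and has $\K(\B)=\B$ stable by hypothesis — gives $\B\cong\H_\B$. Composing, $F\cong\B$ as a right Hilbert $\B$-module, which finishes the reduction and hence the proof.

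I expect the step showing $\K(F)$ is stable to be the crux: it is the only place that genuinely uses the left $\A$-action rather than just the underlying right Hilbert $\B$-module, and it requires some care with the strict topology and the passage to multiplier algebras. The other ingredients — strictly positive elements, the equivalence between $\sigma$-unitality of $\K(F)$ and countable generation of $F$, the matrix-unit characterization of stability, and the Hilbert-module stabilization theorem — are standard and would be cited rather than reproved.
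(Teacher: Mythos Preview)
Your proposal is correct, and the overall architecture matches the paper's: reduce to showing $F\cong\B$ as a right Hilbert $\B$-module, then invoke the discussion preceding Lemma~\ref{l:Psi-fg}; along the way, deduce countable generation of $F$ from $\sigma$-unitality of $\A$ via non-degeneracy of $\phi_F$. The one genuinely different step is how stability of $\A$ is exploited. You push matrix units from $\M(\A)$ through the strict extension $\overline{\phi_F}$ to $\L(F)$, conclude that $\K(F)$ is stable, and then cite the characterization of full countably generated modules $F$ with $\K(F)$ stable as being isomorphic to $\H_\B$. The paper instead uses the single line
\[
F \;\cong\; \A\otimes_\A F \;\cong\; \A^{\oplus\infty}\otimes_\A F \;\cong\; (\A\otimes_\A F)^{\oplus\infty} \;\cong\; F^{\oplus\infty},
\]
exploiting $\A\cong\A^{\oplus\infty}$ (stability) and non-degeneracy, and then applies the Brown--Kasparov stabilization theorem to get $F\cong\B^{\oplus\infty}\cong\B$. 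The paper's route is slicker---no multiplier algebras, no strict topology, no matrix-unit criterion---but yours has the minor expository advantage of making explicit that the obstruction is precisely stability of $\K(F)$, which is the well-known equivalent condition for $F\cong\H_\B$.
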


\begin{proof}
Since $\A$ is $\sigma$-unital and the $*$-homomorphism $\varphi_F\colon \A \to \K(X)$ is non-degenerate, $\K(X)$ is also $\sigma$-unital. By \cite[Proposition 6.7]{Lan95}, $F$ is countably generated.

Since $\A$ is stable, we have $\A \cong \A^{\oplus \infty}$ as right Hilbert $\A$-modules. Therefore, we have a unitary isomorphism of right Hilbert $\B$-modules $F \cong \A\otimes F \cong (\A^{\oplus \infty})\otimes F \cong (\A \otimes F)^{\oplus \infty} \cong F^{\oplus \infty}$. By the Brown--Kasparov Stabilization Theorem (see \cite[II.7.6.12]{Bla06}) we have that $F$ is unitarily isomorphic as a right Hilbert $\B$-module to $\B^{\infty} \cong \B$. Thus, by the discussion preceding Lemma \ref{l:Psi-fg} we get that $F$ is of the form $_f \B$ for some non-degenerate $*$-homomorphism $f: \A \rightarrow \B$.
\end{proof}

Now let $(\A,{}_{\alpha}\A)$ and $(\B,{}_{\beta}\B)$ be two correspondences induced from $*$-automorphisms $\alpha$ and $\beta$ on $\A$ and $\B$ respectively. Let $f :\A \rightarrow \B$ be a non-degenerate $*$-homomorphism, which provides us with an $\A-\B$ correspondence $_f \B$. 
Let $u$ be a unitary in the multiplier algebra of $\B$ satisfying $\beta(f(a)) = uf(\alpha(a))u^*$ for all $a\in \A$, so that it defines a unitary correspondence isomorphism $M_{u}\colon {}_{f\circ \alpha}\B\to {}_{\beta\circ f}\B$  by Lemma \ref{l:hom-u}.
Define a map $\Phi_u = \Psi^{-1} \circ M_u \circ \Psi\colon {}_\alpha \A \otimes {}_f \B \to {}_f \B \otimes {}_\beta \B$, where $\Phi$ denotes the isomorphism from Lemma~\ref{l:Psi-fg}. Then $\Phi_u$ is a unitary correspondence isomorphism which defines the $1$-arrow $[{}_f\B,\Phi_u]\colon (\A, {}_{\alpha}\A) \leftarrow (\B,{}_{\beta}\B)$.

On the other hand, if $[{}_f\B, \Phi] \colon (\A, {}_{\alpha}\A) \leftarrow (\B,{}_{\beta}\B)$ is a $1$-arrow, then $\Psi \circ \Phi \circ \Psi^{-1} = M_u$ for some unitary $u$ in the multiplier algebra of $\B$ by Lemma~\ref{l:hom-cor-iso}.  Thus, every 1-arrow is of the form $[{}_f\B, \Phi_u]$ for some $*$-homomorphism $f$ and unitary $u$ in the multiplier algebra of $\B$. 

\begin{lem} \label{lem:2-arr-calc}
Let $[{}_f\B, \Phi_u]: (\A, {}_{\alpha}\A) \leftarrow (\B, {}_{\beta}\B)$ and $[{}_g\C,\Phi_v] : (\B, {}_{\beta}\B) \leftarrow (\C,{}_{\gamma}\C)$ be composable $1$-arrows. Then the canonical unitary correspondence isomorphism $\Psi: {}_f\B \otimes_{\B} {}_g\C \rightarrow {}_{g\circ f}\C$ is a $2$-arrow between the $1$-arrows $ [{}_f\B, \Phi_u] \circ [{}_g\C,\Phi_v]$ and $[{}_{g\circ f}\C, \Phi_{vg(u)}]$.
\end{lem}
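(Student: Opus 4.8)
The plan is to verify the defining square~\eqref{eq:2-arrow} for $\Psi=\Psi_{f,g}$ directly, after passing to \emph{homomorphism coordinates}: under the canonical isomorphisms of Lemma~\ref{l:Psi-fg}, the interior tensor product of homomorphism correspondences becomes composition of $*$-homomorphisms and each $M_{(\cdot)}$ becomes left multiplication by a multiplier unitary, which trivializes essentially the whole computation. Unwinding the composition law of $\mathfrak{C}^{\mathbb{N}}_{\mathrm{pr}}$, we have $[{}_f\B,\Phi_u]\circ[{}_g\C,\Phi_v]=[{}_f\B\otimes_\B{}_g\C,\Phi_u\odot\Phi_v]$ with $\Phi_u\odot\Phi_v=(1_{{}_f\B}\otimes\Phi_v)\circ(\Phi_u\otimes 1_{{}_g\C})$, so by~\eqref{eq:2-arrow} the claim reduces to the identity
\[
(\Psi_{f,g}\otimes 1_{{}_\gamma\C})\circ(\Phi_u\odot\Phi_v)=\Phi_{vg(u)}\circ(1_{{}_\alpha\A}\otimes\Psi_{f,g})
\]
of unitary correspondence isomorphisms ${}_\alpha\A\otimes_\A{}_f\B\otimes_\B{}_g\C\to{}_{g\circ f}\C\otimes_\C{}_\gamma\C$.

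First I would fix coordinate identifications of the source and target. Iterating Lemma~\ref{l:Psi-fg} identifies the source with ${}_{g\circ f\circ\alpha}\C$ and the target with ${}_{\gamma\circ g\circ f}\C$; since the explicit formula reads $\Psi_{h,k}(x\otimes y)=k(x)y$, these identifications are independent of how one brackets the threefold tensor products (both bracketings send $a\otimes b\otimes c\mapsto g(f(a))\,g(b)\,c$, respectively $b\otimes c\otimes c'\mapsto\gamma(g(b))\,\gamma(c)\,c'$). Using also the definitions $\Phi_u=\Psi_{f,\beta}^{-1}\circ M_u\circ\Psi_{\alpha,f}$, $\Phi_v=\Psi_{g,\gamma}^{-1}\circ M_v\circ\Psi_{\beta,g}$ and $\Phi_{vg(u)}=\Psi_{g\circ f,\gamma}^{-1}\circ M_{vg(u)}\circ\Psi_{\alpha,g\circ f}$, the displayed identity becomes the assertion that a certain unitary ${}_{g\circ f\circ\alpha}\C\to{}_{\gamma\circ g\circ f}\C$ equals $M_{vg(u)}$.

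The core is then two short coordinate computations, each done from the formula for $\Psi_{h,k}$ and its inverse $\Psi_{h,k}^{-1}(c)=\lim_\lambda e_\lambda\otimes c$ (with $\{e_\lambda\}$ an approximate identity of the middle algebra), using that a non-degenerate $*$-homomorphism extends uniquely to a unital, strictly continuous homomorphism of multiplier algebras while automorphisms preserve approximate identities: (i) for $u$ a unitary in $\M(\B)$, under $\Psi_{f\circ\alpha,g}$ and $\Psi_{\beta\circ f,g}$ the map $M_u\otimes 1_{{}_g\C}$ becomes $M_{g(u)}$; (ii) for $v$ a unitary in $\M(\C)$, under $\Psi_{f,g\circ\beta}$ and $\Psi_{f,\gamma\circ g}$ the map $1_{{}_f\B}\otimes M_v$ becomes $M_v$ itself, where this time one invokes the intertwining relation $\gamma(g(b))=vg(\beta(b))v^*$ satisfied by $v$. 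Feeding (i) and (ii) into the definitions of $\Phi_u$ and $\Phi_v$ shows that in coordinates $\Phi_u\odot\Phi_v$ equals $M_v\circ M_{g(u)}=M_{v\,g(u)}$, which is precisely the coordinate form of $\Phi_{vg(u)}$; hence the square commutes. Along the way one records that $vg(u)$ satisfies the intertwining relation making $M_{vg(u)}$, equivalently $\Phi_{vg(u)}$, a correspondence isomorphism --- this follows at once from the relations for $u$ and $v$ by applying $g$ and conjugating --- so that $[{}_{g\circ f}\C,\Phi_{vg(u)}]$ is a well-defined $1$-arrow and $\Psi_{f,g}$ a genuine $2$-arrow.

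The step I expect to be the main obstacle is the bookkeeping of the several interior tensor products and of the associativity isomorphisms suppressed throughout the paper: keeping straight which instance of $\Psi_{h,k}$ identifies which pair of factors, and correctly evaluating the two coordinate computations. The one genuinely asymmetric phenomenon --- and the reason the composed intertwiner is $vg(u)$ rather than something more symmetric --- is that in coordinates $M_u\otimes 1_{{}_g\C}$ must be transported through $g$, becoming $M_{g(u)}$ with $g(u)\in\M(\C)$, whereas $1_{{}_f\B}\otimes M_v$ stays $M_v$; carrying this out, with due attention to strict continuity of the multiplier extension of $g$, is where the real work lies. (A slightly more hands-on alternative skips the coordinate identifications and checks the square directly on elementary tensors $a\otimes b\otimes c$ using the approximate-identity formula for $\Psi_{h,k}^{-1}$; the bookkeeping is essentially the same.)
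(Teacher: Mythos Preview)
Your argument is correct. You pass to ``homomorphism coordinates'' by collapsing every tensor product of homomorphism correspondences to a single copy of $\C$ via the $\Psi_{h,k}$'s, and then observe that in these coordinates $\Phi_u\otimes 1$ becomes $M_{g(u)}$ (via the strict extension of $g$), $1\otimes\Phi_v$ becomes $M_v$ (via the intertwining relation for $v$), and the bracketing maps $\Psi_{f,g}\otimes 1$ and $1\otimes\Psi_{f,g}$ become the identity; the square then reduces to $M_v\circ M_{g(u)}=M_{vg(u)}$. The paper instead takes exactly the ``hands-on alternative'' you mention at the end: it chases an elementary tensor $a\otimes b\otimes c$ through the square directly, using the approximate-identity formula for $\Psi^{-1}$ at each step. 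The two proofs are the same computation in different clothing --- yours front-loads the bookkeeping into a coordinate dictionary and then reads off the answer, while the paper's keeps the tensors explicit throughout; your version makes clearer \emph{why} the intertwiner is $vg(u)$ (the asymmetry you flag: $u$ gets pushed through $g$, $v$ does not), at the cost of having to argue carefully that the various $\Psi$-identifications are compatible with the suppressed associators.
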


\begin{proof}
Fix an approximate identity $\{e_{\lambda}\}_{\lambda \in \Lambda}$ in $\A$. Since $f$ and $g$ are non-degenerate, we get that $e_{\lambda}' = f(e_{\lambda})$ is an approximate identity in $\B$ and $e''_{\lambda} = g(f(e_{\lambda}))$ is an approximate identity in $\C$. We need to prove that the following diagram commutes.
\[
\begin{tikzcd}
{}_\alpha \A \otimes  {}_f \B \otimes  {}_g \C \arrow[r, "\Phi_u\otimes 1_{\C}"] \arrow[d, "1_\A \otimes \Psi"] & {}_f \B \otimes   {}_\beta \B \otimes {}_g \C \arrow[r, "1_\B \otimes \Phi_v"] & {}_f \B \otimes  {}_g \C \otimes {}_\gamma \C \arrow[d, "\Psi\otimes 1_\C"] \\
{}_\alpha \A \otimes {}_{g\circ f} \C \arrow[rr, "\Phi_{vg(u)}"]                                                &                                                                                & {}_{g\circ f} \C \otimes {}_\gamma \C                                      
\end{tikzcd}.
\]

Consider an arbitrary element $a\otimes b\otimes c$ in ${}_\alpha \A \otimes  {}_f \B \otimes  {}_g \C$.
We will chase the element through the diagram by expanding $\Phi_u$ and $\Phi_v$ by definition and by using the inverse formula for $\Psi$ from Lemma~\ref{l:Psi-fg}. 
We have 
\[
  (\Phi_u\otimes 1_{\C})(a\otimes b\otimes c) = \Psi^{-1}(u f(a)b) \otimes c = \lim_{\lambda\in \Lambda} e'_{\lambda} \otimes u f(a)b \otimes c.
\]
Analogously, we have
\[
  (1_{\B} \otimes \Phi_v)(e'_\lambda \otimes u f(a) b \otimes c) = \lim_{\mu\in \Lambda} e'_\lambda \otimes e''_\mu \otimes v g(u) g(f(a)b) c.
\]
Finally, we compute 
\[
  (\Psi\otimes 1_{\C})(e'_{\lambda} \otimes e''_{\mu} \otimes v g(u) g(f(a)b) c) = e''_{\lambda}e''_{\mu} \otimes vg(u) g(f(a)b) c
\]  
and, consequently, the image of $a\otimes b\otimes c$ under the upper path of the diagram is
\[
  \lim_{\mu}\lim_{\lambda} e''_{\lambda}e''_{\mu} \otimes vg(u) g(f(a)b) c = \lim_{\lambda} e''_{\lambda} \otimes vg(u) g(f(a)b) c.
\]
On the other hand, we have
\[
  \Phi_{vg(u)} \circ (1_{\A} \otimes \Psi)(a\otimes b\otimes c) = \Phi_{vg(u)}(a\otimes  g(b)c) =\lim_{\lambda} e''_{\lambda}\otimes vg(u) g(f(a)b) c,
\]  
so the diagram commutes.
\end{proof}

For what follows, we assume some familiarity with the theory of crossed products, as is fleshed out in \cite{RW-Morita-Eq}. Given a $1$-arrow $[{}_f\B, \Phi_u] : (\A,{}_{\alpha}\A) \leftarrow (\B,{}_{\beta}\B)$, we may define a $*$-homomorphism $f_{\rtimes u} : \A \rtimes_{\alpha} \mathbb{Z} \rightarrow \B \rtimes_{\beta} \mathbb{Z}$ by the rules $f_{\rtimes u}|_{\A} = f$ and $f_{\rtimes u}(S_{\alpha}a) = S_{\beta}uf(a)$, where $S_{\alpha}$ and $S_{\beta}$ are the unitary generators of the respective crossed products. It is straightforward to see that this defines a $*$-homomorphism which is equivariant with respect to the dual $\mathbb{T}$ action, that $f_{\rtimes u}|_{\A} = f$, and that $f_{\rtimes u}$ is an isomorphism if and only if $f$ is.

\begin{prp} \label{p:func-cros-prod}
The assignments $(\A,{}_{\alpha}\A) \mapsto \A\rtimes_{\alpha} \mathbb{Z}$ and $[{}_f\B, \Phi_u] \mapsto f_{\rtimes u}$ satisfies the following composition rule: if $[{}_f\B, \Phi_u] : (\A,{}_{\alpha}\A) \leftarrow (\B,{}_{\beta}\B)$ and $[{}_g\C, \Phi_v] : (\B,{}_{\beta} \B) \leftarrow (\C,{}_{\gamma}\C)$ are composable $1$-arrows, then $g_{\rtimes v} \circ f_{\rtimes u} = (g\circ f)_{\rtimes vg(u)}$.
\end{prp}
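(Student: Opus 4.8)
The plan is to verify the composition rule by evaluating both $*$-homomorphisms on a generating set of $\A \rtimes_{\alpha} \mathbb{Z}$ and checking they agree there. Recall that $\A \rtimes_{\alpha} \mathbb{Z}$ is generated as a C*-algebra by the canonical copy of $\A$ together with the unitary multiplier $S_{\alpha}$; more precisely, elements of the form $a$ and $S_{\alpha}a$ for $a \in \A$ span a dense $*$-subalgebra, and a $*$-homomorphism out of $\A \rtimes_{\alpha} \mathbb{Z}$ is determined by its values on these. Since $g_{\rtimes v} \circ f_{\rtimes u}$ and $(g\circ f)_{\rtimes vg(u)}$ are both $*$-homomorphisms from $\A \rtimes_{\alpha} \mathbb{Z}$ to $\C \rtimes_{\gamma} \mathbb{Z}$ --- the latter being well-defined precisely because Lemma~\ref{lem:2-arr-calc} exhibits $[{}_{g\circ f}\C, \Phi_{vg(u)}]$ as a genuine $1$-arrow $(\A,{}_{\alpha}\A) \leftarrow (\C,{}_{\gamma}\C)$ --- it suffices to show they coincide on these generators.

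\textbf{Key steps.} On the copy of $\A$ this is immediate from the defining property $f_{\rtimes u}|_{\A} = f$: we get $(g_{\rtimes v}\circ f_{\rtimes u})|_{\A} = g\circ f = (g\circ f)_{\rtimes vg(u)}|_{\A}$. For the generator $S_{\alpha}a$ with $a \in \A$, I would compute
\[
(g_{\rtimes v}\circ f_{\rtimes u})(S_{\alpha}a) = g_{\rtimes v}(S_{\beta}\,u f(a)) = S_{\gamma}\, v\, g\big(u f(a)\big) = S_{\gamma}\, v\, g(u)\, g(f(a)),
\]
where in the last equality I use that $g$, being non-degenerate, extends to a unital $*$-homomorphism $\M(\B)\to\M(\C)$ which is multiplicative, so that $g(uf(a)) = g(u)g(f(a))$. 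On the other hand, by the defining rule for $(g\circ f)_{\rtimes vg(u)}$,
\[
(g\circ f)_{\rtimes vg(u)}(S_{\alpha}a) = S_{\gamma}\, v g(u)\, (g\circ f)(a) = S_{\gamma}\, v\, g(u)\, g(f(a)),
\]
so the two expressions agree. Together with the agreement on $\A$, this yields $g_{\rtimes v} \circ f_{\rtimes u} = (g\circ f)_{\rtimes vg(u)}$.

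\textbf{Main point of care.} There is no substantive obstacle here --- the statement is essentially a bookkeeping identity --- but one must be careful about multiplier algebras: $g(u)$ denotes the image of $u$ under the canonical extension of $g$ to multiplier algebras, which is what makes $vg(u)$ a unitary in $\M(\C)$ (needed for $\Phi_{vg(u)}$ to make sense) and what makes $g(uf(a)) = g(u)g(f(a))$ valid. One should also stay consistent about whether $S_{\alpha}, S_{\beta}, S_{\gamma}$ are regarded as multipliers or as producing elements of the (dense subalgebra of the) crossed products, which is why the identity is checked on $S_{\alpha}a$ rather than on $S_{\alpha}$ alone, so that all intermediate expressions lie in the C*-algebras even when $\A,\B,\C$ are non-unital.
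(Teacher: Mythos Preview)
Your proof is correct and follows essentially the same approach as the paper: both verify the identity by checking it on the generating elements $a \in \A$ and $S_{\alpha}a$, using the defining formulas for $f_{\rtimes u}$, $g_{\rtimes v}$, and $(g\circ f)_{\rtimes vg(u)}$. Your version is slightly more careful about the role of the multiplier-algebra extension of $g$ and the reason for testing on $S_{\alpha}a$ rather than $S_{\alpha}$ alone, but the argument is the same.
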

\begin{proof}
Clearly we have $(g_{\rtimes v} \circ f_{\rtimes u})|_{\A} = g\circ f = (g\circ f)_{\rtimes vg(u)} |_{\A}$. Now, since
$$
g_{\rtimes v}(f_{\rtimes u}(S_{\alpha})a) = g_{\rtimes v} (S_{\beta}uf(a)) = S_{\gamma}vg(u)g(f(u)) = (g\circ f)_{\rtimes vg(u)}(S_{\alpha} a)
$$
for all $a\in \A$, and since $S_{\alpha}$ generates the crossed product $\A \rtimes_{\alpha}\mathbb{Z}$ together with $\A$, we are done.
\end{proof}

\begin{lem}\label{l:2-arr-inner}
  Suppose that $[{}_f \B, \Phi_u], [{}_g \B, \Phi_v] \colon (\A, {}_{\alpha}\A) \leftarrow (\B, {}_{\beta}\B)$ are two $1$-arrows and let $M_w\colon [{}_f \B, \Phi_u] \rightarrow [{}_g \B, \Phi_v]$ be a $2$-arrow coming from a unitary multiplier $w$ of $\B$.
  Then, $w f_{\rtimes u}(a) w^*  = g_{\rtimes v}(a)$ for all $a\in \A \rtimes_\alpha \bbZ$.
\end{lem}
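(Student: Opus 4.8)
The plan is to reduce the statement to an explicit computation on the unitary generators of the crossed products. Since $\A \rtimes_\alpha \bbZ$ is generated as a C*-algebra by $\A$ together with the unitary $S_\alpha$, and since both $w \, f_{\rtimes u}(\cdot) \, w^*$ and $g_{\rtimes v}(\cdot)$ are $*$-homomorphisms, it suffices to check the identity $w f_{\rtimes u}(a) w^* = g_{\rtimes v}(a)$ for $a \in \A$ and for $a = S_\alpha$. For this I first need to extract the two conditions encoded by the hypothesis that $M_w \colon [{}_f\B, \Phi_u] \to [{}_g\B, \Phi_v]$ is a $2$-arrow. One condition is that $M_w \colon {}_f\B \to {}_g\B$ is a unitary correspondence isomorphism, which by Lemma~\ref{l:hom-u} is precisely $w f(a) w^* = g(a)$ for all $a \in \A$. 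The other is that $M_w$ makes the $2$-arrow diagram \eqref{eq:2-arrow} commute, i.e.\ $(M_w \otimes 1_{{}_\beta\B}) \circ \Phi_u = \Phi_v \circ (1_{{}_\alpha\A} \otimes M_w)$; I will translate this into a concrete relation between $u$, $v$, and $w$.

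The first case is immediate: for $a \in \A$ we have $f_{\rtimes u}(a) = f(a)$ and $g_{\rtimes v}(a) = g(a)$, so $w f_{\rtimes u}(a) w^* = w f(a) w^* = g(a) = g_{\rtimes v}(a)$ by Lemma~\ref{l:hom-u}. The work is in the second case. The key step is to compute what the commuting square \eqref{eq:2-arrow} says after conjugating by the isomorphism $\Psi$ of Lemma~\ref{l:Psi-fg}, exactly as in the recipe preceding Lemma~\ref{lem:2-arr-calc}: both $\Phi_u$ and $\Phi_v$ are of the form $\Psi^{-1} \circ M_{(\cdot)} \circ \Psi$, so the diagram becomes an identity between left-multiplication operators $M_u, M_v, M_w$ on copies of $\B$. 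Chasing an element $a \otimes b \in {}_\alpha\A \otimes {}_f\B$ through both paths, using the inverse formula $\Psi^{-1}(b) = \lim_\lambda e_\lambda \otimes b$ and that $M_w$ acts on the left factor, I expect to land on the scalar relation $w u = v \beta(w)$ in $\M(\B)$ — equivalently $\beta(w)^* u^* w^* \cdot (\text{nothing})$; in any case a relation of the shape $w u f(\alpha(a)) = v \, \beta(w) f(\alpha(a))$ for all $a$, which (using nondegeneracy of $f \circ \alpha$) gives $w u = v \, \beta(w)$ in $\M(\B)$.

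With this relation in hand I finish the generator case. We have
\[
w \, f_{\rtimes u}(S_\alpha) \, w^* = w \, S_\beta u \, w^* = S_\beta \, \beta(w) \, u \, w^*,
\]
using that $S_\beta b S_\beta^{-1} = \beta(b)$ in $\B \rtimes_\beta \bbZ$ for $b \in \M(\B)$. Applying the relation $\beta(w) u = \beta(w) u$ — more precisely substituting $u w^* \mapsto$ via $w u = v\beta(w)$ so that $\beta(w) u w^* $... — I rewrite this as $S_\beta v = g_{\rtimes v}(S_\alpha)$; here I must be careful to get the relation in the form that lets me replace $\beta(w)\,u\,w^{*}$ by $v$, i.e.\ I will want the $2$-arrow relation in the precise form $\beta(w)\, u = v\, w$, which I should obtain by a careful element-chase rather than guessing the side on which $\beta$ appears. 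The main obstacle I anticipate is exactly pinning down this relation with the correct placement of $\beta$ and of $w$ versus $w^*$: the conjugation-by-$\Psi$ computation must be done carefully, since $M_w$ is left multiplication on the \emph{first} tensor factor before applying $\Psi$ but becomes absorbed differently on each side of \eqref{eq:2-arrow}, and a sign/inverse error there propagates to the conclusion. Once the relation is correctly established, both remaining verifications are one-line computations, and since $\A$ and $S_\alpha$ generate $\A \rtimes_\alpha \bbZ$ the two $*$-homomorphisms agree everywhere.
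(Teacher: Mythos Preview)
Your approach is correct and essentially identical to the paper's: reduce to generators, invoke Lemma~\ref{l:hom-u} for $a\in\A$, and extract a multiplier relation from the $2$-arrow square to handle $S_\alpha$. The element chase in the paper gives $\beta(w)\,u\,f(a) = v\,g(a)\,w$ for all $a\in\A$ (equivalently $\beta(w)u = vw$ after substituting $g(a)=wf(a)w^*$ and using nondegeneracy of $f$), which is the relation you correctly settle on; your first guess $wu = v\beta(w)$ is indeed the wrong placement, so the careful chase you flag as necessary is exactly what resolves it. One small simplification in the paper: it verifies the identity on elements of the form $S_\alpha a$ rather than on $S_\alpha$ alone, so the relation $\beta(w)uf(a)=vg(a)w$ can be used directly without first passing to the multiplier-level equation.
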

\begin{proof}
  The $2$-arrow $M_w$ is a unitary correspondence isomorphism between ${}_f \B$ and ${}_g \B$.
  Therefore, by Lemma~\ref{l:hom-u} we have $w f(a) w^* = g(a)$ for all $a\in \A$.
  Thus, we need only verify that $w f_{\rtimes u}(S_\alpha a) w^* = g_{\rtimes v}(S_\alpha a)$ for all $a\in \A$.

  By definition of 2-arrow we get the following commuting diagram
  \[
    \begin{tikzcd}
      {}_\alpha \A \otimes {}_f \B \arrow[r, "\Phi_u"] \arrow[d, "1_\A\otimes M_w"] & {}_f \B \otimes {}_\beta \B \arrow[d, "M_w \otimes 1_\B"] \\
      {}_\alpha \A \otimes {}_g \B \arrow[r, "\Phi_v"]                              & {}_g \B \otimes {}_\beta \B                              
    \end{tikzcd}
  \]

  Let $a\otimes b \in {}_\alpha \A \otimes {}_f \B$ be an arbitrary element.
  Then, analogously to the proof of Lemma~\ref{lem:2-arr-calc}, we have 
  \[
    (M_w \otimes 1_\B) (\Phi_u(a\otimes b)) = \lim_{\lambda} e_\lambda \otimes \beta(w)uf(a)b
  \]
  and
  \[
    \Phi_v((1_\A \otimes M_w)(a\otimes b)) = \lim_{\lambda} e_\lambda \otimes v g(a) w b
  \]
  where $\{e_\lambda\}_{\lambda}$ is an approximate identity in $\B$. From the commutativity of the diagram, we get $\beta(w)u f(a) = v g(a)w$ for all $a\in \A$. Thus, we have $wf_{\rtimes u}(S_\alpha a)w^* = w S_\beta u f(a) w^* = S_\beta \beta(w) u f (a) w^* = S_\beta v g(a) = g_{\rtimes v}(S_\alpha a)$.
\end{proof}

The following result shows that homotopy is preserved under the crossed product operation when the associated C*-algebras are stable and $\sigma$-unital. Recall that a $1$-arrow $[X,\Phi]$ is called an equivalence arrow if $X$ is an imprimitivity bimodule.

\begin{prp} \label{p:homotopic-crossed-prod}
Let $(\A,{}_{\alpha}\A)$ and $(\B,{}_{\beta}\B)$ be objects such that $\A$ and $\B$ are stable and $\sigma$-unital, whereas $\alpha$ and $\beta$ are $*$-automorphisms on $\A$ and $\B$ respectively. Suppose $[{}_f\B, \Phi_u] : (\A,{}_{\alpha}\A) \leftarrow (\B,{}_{\beta}\B)$ is a $1$-arrow between them. If $[{}_f\B, \Phi_u]$ is homotopic to an equivalence 1-arrow via a homotopy $(H,\Phi_H,h_0,h_1)$, then $f_{\rtimes u}$ is equivariantly homotopic to a $*$-isomorphism.
\end{prp}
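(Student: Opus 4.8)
The plan is to transport the homotopy $(H,\Phi_H,h_0,h_1)$ to an honest path of $*$-homomorphisms between crossed products via the homomorphism-correspondence dictionary of Section~\ref{s:cor-hom}. First I would put $[H,\Phi_H]$ into homomorphism form. Since $\B$ is stable and $\sigma$-unital, so is $C(I,\B)=C(I)\otimes\B$, and since $H$ is a full and proper $\A-C(I,\B)$ correspondence, Lemma~\ref{lem:induce-*hom} together with the discussion preceding Lemma~\ref{l:Psi-fg} produces a non-degenerate $*$-homomorphism $\widetilde f\colon\A\to C(I,\B)$ with $H\cong{}_{\widetilde f}C(I,\B)$. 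Writing $\widetilde\beta=\id_{C(I)}\otimes\beta$, we have $C(I,{}_\beta\B)={}_{\widetilde\beta}C(I,\B)$, so $(C(I,\B),{}_{\widetilde\beta}C(I,\B))$ is an object of the kind studied in Section~\ref{s:cor-hom}, and the discussion following Lemma~\ref{l:hom-cor-iso} upgrades the identification to a $2$-arrow $[H,\Phi_H]\cong[{}_{\widetilde f}C(I,\B),\Phi_{\widetilde u}]$ for some unitary multiplier $\widetilde u$ of $C(I,\B)$. For $t\in I$ put $f_t=\ev_t\circ\widetilde f\colon\A\to\B$ and $\widetilde u_t=\ev_t(\widetilde u)\in\M(\B)$. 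Comparing the inverse formula of Lemma~\ref{l:Psi-fg} with the definition of $\eta_t$ shows that, as a $1$-arrow, $\ev_t=[{}_{\ev_t}\B,\Phi_1]$; hence by Lemma~\ref{lem:2-arr-calc} one obtains $[H_t,\Phi_{H_t}]=[H,\Phi_H]\otimes\ev_t\cong[{}_{f_t}\B,\Phi_{\widetilde u_t}]$ for every $t$.

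Next I would build the homotopy at the level of crossed products. Applying the construction of Proposition~\ref{p:func-cros-prod} to $[{}_{\widetilde f}C(I,\B),\Phi_{\widetilde u}]$ yields a $\mathbb{T}$-equivariant $*$-homomorphism $\widetilde f_{\rtimes\widetilde u}\colon\A\rtimes_\alpha\mathbb{Z}\to C(I,\B)\rtimes_{\widetilde\beta}\mathbb{Z}\cong C(I,\B\rtimes_\beta\mathbb{Z})$, the last isomorphism being equivariant for the dual actions and sending $S_{\widetilde\beta}$ to the constant function $S_\beta$. Directly from the definition of the crossed-product construction, $\ev_t\circ\widetilde f_{\rtimes\widetilde u}=(f_t)_{\rtimes\widetilde u_t}$ for all $t$. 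At $t=1$, composing $h_1$ with the identification of the previous paragraph gives a unitary correspondence isomorphism ${}_{f_1}\B\cong G$; since $G$ is an imprimitivity bimodule its left action is an isomorphism onto $\K(G)$, hence so is the left action of ${}_{f_1}\B$, which equals $\lambda_\B\circ f_1$ and has range $\K({}_{f_1}\B)=\lambda_\B(\B)$. Therefore $f_1\colon\A\to\B$ is a $*$-isomorphism, and so is $(f_1)_{\rtimes\widetilde u_1}$. At $t=0$, composing $h_0$ with the same identification gives a $2$-arrow $[{}_{f_0}\B,\Phi_{\widetilde u_0}]\to[{}_f\B,\Phi_u]$, which by Lemma~\ref{l:hom-cor-iso} is $M_{w_0}$ for a unitary multiplier $w_0$ of $\B$; then Lemma~\ref{l:2-arr-inner} yields $\mathrm{Ad}_{w_0}\circ(f_0)_{\rtimes\widetilde u_0}=f_{\rtimes u}$.

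It remains to assemble these pieces. Since $w_0\in\M(\B)$ is fixed by the dual $\mathbb{T}$-action, $\mathrm{Ad}_{w_0}$ is a $\mathbb{T}$-equivariant automorphism of $\B\rtimes_\beta\mathbb{Z}$, so $\Theta:=(\id_{C(I)}\otimes\mathrm{Ad}_{w_0})\circ\widetilde f_{\rtimes\widetilde u}\colon\A\rtimes_\alpha\mathbb{Z}\to C(I,\B\rtimes_\beta\mathbb{Z})$ is a $\mathbb{T}$-equivariant $*$-homomorphism with $\ev_0\circ\Theta=\mathrm{Ad}_{w_0}\circ(f_0)_{\rtimes\widetilde u_0}=f_{\rtimes u}$ and $\ev_1\circ\Theta=\mathrm{Ad}_{w_0}\circ(f_1)_{\rtimes\widetilde u_1}$, the latter a $*$-isomorphism; thus $\Theta$ exhibits $f_{\rtimes u}$ as equivariantly homotopic to a $*$-isomorphism. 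I expect the main difficulty to lie in the bookkeeping in the first step: one must keep $\widetilde u$ in $\M(C(I,\B))$ (rather than in $C(I,\M(\B))$), check that its evaluations $\widetilde u_t$ are the correct unitary multipliers of $\B$, and — most delicately — verify that the evaluation $1$-arrow $\ev_t$ is \emph{precisely} $[{}_{\ev_t}\B,\Phi_1]$, so that Lemmas~\ref{lem:2-arr-calc} and~\ref{l:2-arr-inner} can be applied verbatim to identify $[H_t,\Phi_{H_t}]$ with $[{}_{f_t}\B,\Phi_{\widetilde u_t}]$ and to pin down the two endpoints of $\Theta$.
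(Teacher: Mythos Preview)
Your proposal is correct and follows essentially the same route as the paper: put $H$ in homomorphism form ${}_{\widetilde f}C(I,\B)$ with $\Phi_H=\Phi_{\widetilde u}$, pass to the crossed product to obtain an equivariant path $\widetilde f_{\rtimes\widetilde u}$ into $C(I,\B\rtimes_\beta\mathbb{Z})$, and use Lemma~\ref{l:2-arr-inner} at the endpoints to match $t=0$ with $f_{\rtimes u}$ (after conjugating by $w_0$) and to see $t=1$ gives a $*$-isomorphism. Your treatment of the $t=1$ endpoint is slightly more direct than the paper's---you argue that ${}_{f_1}\B\cong G$ forces $f_1$ to be an isomorphism, whereas the paper first rewrites the equivalence arrow as $[{}_g\B,\Phi_v]$ and then uses $h_1=M_{w_1}$ and Lemma~\ref{l:2-arr-inner} to conclude $(f_1)_{\rtimes\widetilde u_1}$ is inner conjugate to the isomorphism $g_{\rtimes v}$---but this is a cosmetic difference within the same strategy.
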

\begin{proof}
  Let $[X,\Phi]$ be an equivalence $1$-arrow. Since $\A \cong \K(X)$ is $\sigma$-unital, we get that $X$ is countably generated by \cite[Proposition 6.7]{Lan95}. Since $\B$ is $\sigma$-unital and $X$ is full, by Lemma \ref{lem:induce-*hom} and Lemma \ref{l:hom-cor-iso} there is a $*$-isomorphism $g: \A \rightarrow \B$ and a unitary multiplier $v$ of $\B$ such that $[X, \Phi] \cong [{}_g\B, \Phi_v]$ as 1-arrows. 
Moreover, $g_{\rtimes v}$ is an equivariant $*$-isomorphism since its restriction $g$ onto the fixed point algebra $\A$ is a $*$-isomorphism.

Now let $(H,\Phi_H, h_0,h_1)$ be a homotopy equivalence between $[{}_f\B,\Phi_u]$ and $[X,\Phi] \cong [{}_g\B,\Phi_v]$. By Lemma \ref{lem:induce-*hom} and Lemma \ref{l:hom-cor-iso} we may write it as $H \cong {}_{\tau}C(I,\B)$ for a $*$-homomorphism $\tau :\A \rightarrow C(I,\B)$, and $\Phi_H = \Phi_U$ for some unitary multiplier $U$ of $C(I,\B)$. This means that $h_0 = M_{w_0}$ is a $2$-arrow between $[{}_{\tau_0}\B,\Phi_{U_0}]$ and $[{}_f \B,\Phi_u]$ and that $h_1 = M_{w_1}$ is a $2$-arrow between $[{}_{\tau_1}\B,\Phi_{U_1}]$ and $[{}_g\B,\Phi_v]$.
By Lemma~\ref{l:2-arr-inner}, we have $\mathrm{Ad}_{w_0}((\tau_0)_{\rtimes U_0}(a)) = f_{\rtimes u}(a)$ and $\mathrm{Ad}_{w_1}((\tau_1)_{\rtimes U_1}(a)) = g_{\rtimes v}(a)$ for all $a\in \A \rtimes_{\alpha} \mathbb{Z}$. 
In particular, $(\tau_1)_{\rtimes U_1}$ is an equivariant $*$-isomorphism since $g_{\rtimes v}$ is an equivariant $*$-isomorphism.

The $1$-arrow $[H,\Phi_U]$ induces an equivariant $*$-homomorphism $\tau_{\rtimes U} : \A \rtimes_{\alpha} \mathbb{Z} \rightarrow C(I,\B)\rtimes_{\id_{C(I)}\otimes \beta} \mathbb{Z} \cong C(I, \B\rtimes_{\beta}\mathbb{Z})$ which then gives rise to an equivariant homotopy between $(\tau_0)_{\rtimes U_0}$ and the equivariant $*$-isomorphism $(\tau_1)_{\rtimes U_1}$. Thus, after adjoining the $*$-homomorphism $\tau_{\rtimes U}$ with $w_0$, we obtain an equivariant homotopy of $f_{\rtimes u}$ with a $*$-isomorphism.
\end{proof}

\section{Classification up to equivariant homotopy equivalence} \label{s:main}

In this section we prove that shift equivalence of finite essential matrices $A$ and $B$ coincides with equivariant homotopy equivalence of $C^*(G_A) \otimes \mathbb{K}$ and $C^*(G_B)\otimes \mathbb{K}$, and that aligned shift equivalence of $X(A)$ and $X(B)$ implies that $C^*(G_A) \otimes \mathbb{K}$ and $C^*(G_B)\otimes \mathbb{K}$ are equivariantly isomorphic. We will assume some rudimentary familiarity with K-theory and crossed products for graph C*-algebras, which can be found in \cite{LLR00} and \cite[Chapter 7]{Raeburn-graph}.

Let $A$ be a finite essential matrix with entries in $\mathbb{N}$. We denote by $(D_A,D_A^+)$ the inductive limit of the following system of ordered abelian groups
\[\begin{CD}
(\bbZ^V,\bbZ^V_+) @>A^T>> (\bbZ^V,\bbZ^V_+) @>A^T>> (\bbZ^V,\bbZ^V_+) @>A^T>> \cdots
\end{CD}\]
and let $d_A : D_A \rightarrow D_A$ be the homomorphism induced by
\[\begin{CD}
	\bbZ^V @>A^T>> \bbZ^V @>A^T>> \bbZ^V @>A^T>> \cdots D_A \\
	@VVA^T V @VVA^T V @VVA^T V @VV d_AV  \\
	\bbZ^V @>A^T>> \bbZ^V @>A^T>> \bbZ^V @>A^T>> \cdots D_A.
\end{CD}\]
The triple $(D_A,D_A^+,d_A)$ is called the \emph{dimension triple} of $A$. The importance of this dimension triples comes from a theorem of Krieger \cite[Theorem 6.4]{Eff81} (see also \cite{Kri80}), which states that two essential matrices $A$ and $B$ are shift equivalent if and only if they have isomorphic dimension triples. In fact, this data can be recovered from the underlying graph C*-algebras together with the gauge action, and coincides with the $K$-theory triple of the crossed product by the gauge action $\gamma^A$ on $C^*(G_A)$. 
It follows from \cite[Corollary 7.14]{Raeburn-graph} together with the discussion preceding \cite[Lemma 7.15]{Raeburn-graph} that the triple $(D_A,D_A^+, d_A)$ is isomorphic to the triple, 
$$
(K_0(C^*(G_A)\rtimes_{\gamma^A}\bbT),K_0(C^*(G_A)\rtimes_{\gamma^A}\mathbb{T})^+,K_0(\widehat{\gamma}^A_1)^{-1}),
$$
where $\widehat{\gamma}^A_1$ is the dual $\mathbb{Z}$ action on $C^*(G_A)\rtimes_{\gamma^A}\mathbb{T}$. Through the use of equivariant $K$-theory, this allows us to show that shift equivalence is an invariant of equivariant homotopy equivalence of the associated graph C*-algebras.

\begin{prp} \label{p:eq-hom-eq-se}
Let $A$ and $B$ be finite essential matrices, and suppose that $C^*(G_A)\otimes \mathbb{K}$ and $C^*(G_B)\otimes \mathbb{K}$ are equivariantly homotopy equivalent. Then $A$ and $B$ are shift equivalent.
\end{prp}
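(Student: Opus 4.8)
The plan is to pass to crossed products by the gauge circle actions, apply $K_0$, and then invoke Krieger's theorem through the identification of the dimension triple recalled just above the proposition.

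First I would observe that a gauge-equivariant $*$-homomorphism $\tau\colon C^*(G_A)\otimes\mathbb{K}\to C^*(G_B)\otimes\mathbb{K}$ (equivariant for $\gamma^A\otimes\id_{\mathbb{K}}$ and $\gamma^B\otimes\id_{\mathbb{K}}$) induces a $*$-homomorphism $\tau\rtimes\mathbb{T}$ between the crossed products $(C^*(G_A)\otimes\mathbb{K})\rtimes\mathbb{T}$ and $(C^*(G_B)\otimes\mathbb{K})\rtimes\mathbb{T}$ that is equivariant for the dual $\mathbb{Z}$-actions. Since $\mathbb{T}$ acts trivially on $\mathbb{K}$, there is a natural isomorphism $(C^*(G_A)\otimes\mathbb{K})\rtimes_{\gamma^A\otimes\id}\mathbb{T}\cong (C^*(G_A)\rtimes_{\gamma^A}\mathbb{T})\otimes\mathbb{K}$ carrying the dual $\mathbb{Z}$-action to $\widehat{\gamma}^A_1\otimes\id_{\mathbb{K}}$, and likewise for $B$. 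A homotopy through gauge-equivariant $*$-homomorphisms is a $\mathbb{T}$-equivariant $*$-homomorphism into $C(I,C^*(G_B)\otimes\mathbb{K})$; inducing on crossed products exactly as above and using $C(I,D)\rtimes_{\id\otimes\beta}\mathbb{T}\cong C(I,D\rtimes_\beta\mathbb{T})$ (as already used in Proposition~\ref{p:homotopic-crossed-prod}), this yields a norm-continuous path of $*$-homomorphisms between the crossed products.

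Applying $K_0$ and using its homotopy invariance, the maps $\tau,\rho$ from the hypothesis together with the homotopies $\tau\circ\rho\sim\id$ and $\rho\circ\tau\sim\id$ produce mutually inverse group homomorphisms between $K_0\big((C^*(G_A)\rtimes_{\gamma^A}\mathbb{T})\otimes\mathbb{K}\big)$ and $K_0\big((C^*(G_B)\rtimes_{\gamma^B}\mathbb{T})\otimes\mathbb{K}\big)$. Since $*$-homomorphisms send projections to projections, these maps are order-preserving, and being mutual inverses they assemble into an isomorphism of ordered abelian groups; since the induced crossed-product maps intertwine the dual $\mathbb{Z}$-actions, the isomorphism intertwines $K_0(\widehat{\gamma}^A_1)$ with $K_0(\widehat{\gamma}^B_1)$, hence also their inverses. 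Composing with the stability isomorphism $K_0(D)\cong K_0(D\otimes\mathbb{K})$, which is an order isomorphism compatible with the induced automorphisms, I obtain an isomorphism of ordered abelian groups $K_0(C^*(G_A)\rtimes_{\gamma^A}\mathbb{T})\cong K_0(C^*(G_B)\rtimes_{\gamma^B}\mathbb{T})$ intertwining $K_0(\widehat{\gamma}^A_1)^{-1}$ and $K_0(\widehat{\gamma}^B_1)^{-1}$. By the identification recalled before the proposition (via \cite[Corollary 7.14]{Raeburn-graph} and the discussion preceding \cite[Lemma 7.15]{Raeburn-graph}) this is precisely an isomorphism of the dimension triples $(D_A,D_A^+,d_A)\cong(D_B,D_B^+,d_B)$, so Krieger's theorem \cite[Theorem 6.4]{Eff81} gives that $A$ and $B$ are shift equivalent.

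The main obstacle is not conceptual but bookkeeping: one must check that a single equivariant homotopy descends to a genuine norm-continuous path of $*$-homomorphisms between the crossed products \emph{and} that all the intervening natural identifications --- crossed product versus $\otimes\,\mathbb{K}$, stability of $K_0$, and the order structure --- are simultaneously compatible with the dual $\mathbb{Z}$-actions. Each ingredient is standard, but they must be threaded together consistently; there is no deep point beyond Krieger's theorem itself.
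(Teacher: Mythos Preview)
Your proof is correct and takes essentially the same route as the paper's: both pass to $K_0$ of the crossed product by the gauge action, show the resulting ordered-group-with-automorphism data agree, and conclude via Krieger's theorem. The only difference is packaging---the paper routes through equivariant $K$-theory, citing \cite{ERS22} and \cite{Phinotes} for the identification of the triple with the ordered $\mathbb{Z}[x,x^{-1}]$-module $K_0^{\mathbb{T}}(C^*(G_A)\otimes\mathbb{K})$ and for its homotopy invariance, whereas you unpack this directly by inducing on crossed products and applying ordinary $K_0$; by Julg's identification $K_0^{\mathbb{T}}(A)\cong K_0(A\rtimes\mathbb{T})$ these are the same computation, with your version more self-contained and the paper's more concise by outsourcing the bookkeeping.
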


\begin{proof}
First, by \cite[Corollary 3.1]{ERS22} (see also \cite[Theorem 2.6.1]{Phinotes}) and \cite[Corollary 3.3]{ERS22} we have coincidence of the triple
$$
(K_0(C^*(G_A)\rtimes_{\gamma^A}\bbT),K_0(C^*(G_A)\rtimes_{\gamma^A}\mathbb{T})^+,K_0(\widehat{\gamma}^A_1)^{-1}),
$$
with the equivariant $K$-theory $\mathbb{Z}[x,x^{-1}]$-module $(K_0^{\mathbb{T}}(C^*(G_A) \otimes \mathbb{K}), K^{\mathbb{T}}_0(C^*(G_A) \otimes \mathbb{K})^+)$. 

Using equivariant homotopy equivalence we apply \cite[Proposition 2.5.10]{Phinotes} and its proof to get that the $\mathbb{Z}[x,x^{-1}]$-modules 
\begin{align*}
(K_0^{\mathbb{T}}(C^*(G_A) \otimes \mathbb{K}), K^{\mathbb{T}}_0(C^*(G_A) \otimes \mathbb{K})^+) \\
(K_0^{\mathbb{T}}(C^*(G_B) \otimes \mathbb{K}), K^{\mathbb{T}}_0(C^*(G_B) \otimes \mathbb{K})^+)
\end{align*}
are isomorphic as $\mathbb{Z}[x,x^{-1}]$-modules. Therefore, the triples 
\begin{align*}
(K_0(C^*(G_A)\rtimes_{\gamma^A}\bbT),K_0(C^*(G_A)\rtimes_{\gamma^A}\mathbb{T})^+,K_0(\widehat{\gamma}^A_1)^{-1}) \\
(K_0(C^*(G_B)\rtimes_{\gamma^B}\bbT),K_0(C^*(G_B)\rtimes_{\gamma^B}\mathbb{T})^+,K_0(\widehat{\gamma}^B_1)^{-1}),
\end{align*}
are also isomorphic. Since these triples coincide with the respective dimension triples for $A$ and $B$ (see \cite[Corollary 7.14]{Raeburn-graph} together with the discussion preceding \cite[Lemma 7.15]{Raeburn-graph}), we see that $A$ and $B$ are shift equivalent by \cite[Theorem 6.4]{Eff81}.
\end{proof}

\begin{thm} \label{t:main}
Let $A$ and $B$ be finite essential square matrices with entries in $\mathbb{N}$, over $V$ and $W$ respectively. The following are equivalent.
\begin{enumerate}
\item $A$ and $B$ are shift equivalent.
\item $(c(V),X(A))$ and $(c(W),X(B))$ are homotopy shift equivalent.
\item $C^*(G_A) \otimes \mathbb{K}$ and $C^*(G_B) \otimes \mathbb{K}$ are equivariantly homotopy equivalent.
\end{enumerate}
\end{thm}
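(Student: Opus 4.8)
The plan is to prove the cycle of implications $(1)\Rightarrow(2)\Rightarrow(3)\Rightarrow(1)$, assembling the machinery built up in the previous sections. The implication $(3)\Rightarrow(1)$ is already done: it is precisely Proposition \ref{p:eq-hom-eq-se}, so nothing further is needed there. The implication $(1)\Rightarrow(2)$ is also essentially in hand, being the content of Proposition \ref{p:se=hse}, which says that shift equivalence of the essential matrices $A$ and $B$ of lag $m$ yields a concrete homotopy shift of lag $m$ between the graph correspondences $(c(V),X(A))$ and $(c(W),X(B))$. So the real work is $(2)\Rightarrow(3)$.

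For $(2)\Rightarrow(3)$, the first step is to apply Theorem \ref{thm:funct-cuntz}: homotopy shift equivalence of $(c(V),X(A))$ and $(c(W),X(B))$ passes, under the Meyer--Sehnem homomorphism $\O^0$, to a homotopy shift equivalence of the imprimitivity-bimodule objects $(\O_{X(A)}^0,\O_{X(A)}^1)$ and $(\O_{X(B)}^0,\O_{X(B)}^1)$. The second step is to stabilize: since $C^*(G_A)\cong \O_{X(A)}$ with its gauge action, and stabilizing is a homomorphism of the bicategory that preserves the fixed-point and degree-$1$ structure, we may replace these objects by $(\O_{X(A)}^0\otimes\mathbb{K},\ \O_{X(A)}^1\otimes\mathbb{K})$, which are now imprimitivity bimodules over \emph{stable} and $\sigma$-unital C*-algebras. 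The third step is to recognize that for an imprimitivity bimodule $(\A,{}_{\alpha}\A)$ arising from a $*$-automorphism, the Cuntz--Pimsner algebra is the crossed product $\A\rtimes_{\alpha}\mathbb{Z}$ with its dual $\mathbb{T}$-action, and that $\O_X^0\otimes\mathbb{K}$-corespondence $\O_X^1\otimes\mathbb{K}$ is (by Lemma \ref{lem:induce-*hom} and Lemma \ref{l:hom-cor-iso}) of the form ${}_{\alpha}(\O_X^0\otimes\mathbb{K})$ for a $*$-automorphism $\alpha$; one must check that this $\alpha$ implements the correct grading, i.e.\ that $(\O_X^0\otimes\mathbb{K})\rtimes_{\alpha}\mathbb{Z}$ is equivariantly isomorphic to $\O_X\otimes\mathbb{K}=C^*(G_A)\otimes\mathbb{K}$ with its gauge action. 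This is a standard fact about Cuntz--Pimsner algebras of imprimitivity bimodules but does require care.

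The fourth and crucial step is to upgrade the homotopy shift equivalence at the level of imprimitivity-bimodule objects into an honest equivariant homotopy equivalence of the crossed products. A homotopy shift of lag $m$ gives $1$-arrows $[M,\Phi_M]$, $[N,\Phi_N]$ with $[M,\Phi_M]\otimes[N,\Phi_N]\sim[(\O_{X(A)}^1)^{\otimes m},1]$ and symmetrically for $B$. By Lemma \ref{lem:induce-*hom} and Lemma \ref{l:hom-cor-iso}, after stabilizing, each of these $1$-arrows is of the form $[{}_f\B,\Phi_u]$, and the right-hand side $[(\O_X^1)^{\otimes m},1_{\O_X^{m+1}}]\cong[\O_X^m,1_{\O_X^{m+1}}]$ is an \emph{equivalence} $1$-arrow (it is the $m$-th tensor power of an imprimitivity bimodule, hence an imprimitivity bimodule). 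So Proposition \ref{p:homotopic-crossed-prod} applies: the induced $*$-homomorphisms $f^M_{\rtimes u}$ and $f^N_{\rtimes v}$ on the crossed products are each equivariantly homotopic to a $*$-isomorphism. Then Proposition \ref{p:func-cros-prod} tells us how the composite $1$-arrow $[M\otimes N,\Phi_M\odot\Phi_N]$ maps to the composite $*$-homomorphism $f^N_{\rtimes v}\circ f^M_{\rtimes u}$ (up to the explicit unitary twist), and Lemma \ref{l:2-arr-inner} guarantees that the $2$-arrow witnessing $[M\otimes N,\cdots]\cong[\O_{X(A)}^m,1]$ descends to an inner perturbation relating $f^N_{\rtimes v}\circ f^M_{\rtimes u}$ to the equivariant $*$-isomorphism $(\O_{X(A)}^0\otimes\mathbb{K})\rtimes\mathbb{Z}\xrightarrow{\sim}(\O_{X(A)}^m\otimes\mathbb{K})\rtimes\mathbb{Z}$ coming from the identification $\O_X^m\cong(\O_X^1)^{\otimes m}$. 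Composing all of this, $f^M_{\rtimes u}$ and $f^N_{\rtimes v}$ are equivariant $*$-homomorphisms between $C^*(G_A)\otimes\mathbb{K}$ and $C^*(G_B)\otimes\mathbb{K}$ whose two composites are each equivariantly homotopic to a $*$-isomorphism, hence (after composing one of them with the inverse of that $*$-isomorphism) equivariantly homotopic to the identity. This is exactly condition $(3)$.

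The main obstacle I expect is the fourth step: carefully tracking the unitary cocycle data through Proposition \ref{p:func-cros-prod} and Lemma \ref{l:2-arr-inner} so that the homotopies at the level of $1$-arrows really do produce \emph{simultaneous} equivariant homotopies of $\tau\circ\rho$ and $\rho\circ\tau$ to the identity, rather than merely to \emph{some} automorphism. The point is that the homotopy shift equivalence carries the extra rigidity ($\Phi_M$, $\Phi_N$ are part of the data, and the right-hand sides are the untwisted powers $[X^{\otimes m},1]$) precisely so that the resulting isomorphisms are canonical; this is what the remark following Theorem \ref{thm:funct-cuntz} warns cannot be done with the weaker notion used in \cite{KK-JFA2014}. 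A secondary technical point is the lag-$m$ bookkeeping: one works with $[\O_X^m,1_{\O_X^{m+1}}]$ rather than $[\O_X^1,1_{\O_X^2}]$, but since $\O_X^m\cong(\O_X^1)^{\otimes m}$ as imprimitivity bimodules and the corresponding crossed product $(\O_X^0\otimes\mathbb{K})\rtimes_{\alpha^m}\mathbb{Z}$ sits inside $(\O_X^0\otimes\mathbb{K})\rtimes_{\alpha}\mathbb{Z}$ equivariantly (indeed they are equivariantly isomorphic after rescaling the $\mathbb{T}$-action, and one only needs the homotopy class), this introduces no essential difficulty.
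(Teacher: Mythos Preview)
Your proposal follows essentially the same route as the paper's proof. One small misstatement to flag: Proposition \ref{p:homotopic-crossed-prod} does not apply to $f^M_{\rtimes u}$ and $f^N_{\rtimes v}$ individually---neither $[M,\Phi_M]$ nor $[N,\Phi_N]$ is assumed homotopic to an equivalence arrow---but rather to the \emph{composites} $[M\otimes N,\Phi_M\odot\Phi_N]$ and $[N\otimes M,\Phi_N\odot\Phi_M]$, yielding that $g_{\rtimes v}\circ f_{\rtimes u}$ and $f_{\rtimes u}\circ g_{\rtimes v}$ are each equivariantly homotopic to $*$-automorphisms $\tau$ and $\rho$ (you do reach this conclusion a few lines later). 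Your ``main obstacle'' then dissolves via the monoid argument the paper invokes: $\tau^{-1}\circ g_{\rtimes v}$ is a left homotopy inverse for $f_{\rtimes u}$ and $g_{\rtimes v}\circ\rho^{-1}$ is a right one, so they agree up to homotopy and $f_{\rtimes u}$ is an equivariant homotopy equivalence---there is no need to arrange that the target automorphisms are the identity on the nose. Your ``secondary technical point'' about lag $m$ is likewise a non-issue: one never compares $\rtimes_{\alpha^m}\mathbb{Z}$ with $\rtimes_{\alpha}\mathbb{Z}$, since $[\O_X^m,1_{\O_X^{m+1}}]$ is a self-arrow of $(\O_X^0,\O_X^1)$ and Proposition \ref{p:homotopic-crossed-prod} only requires it to be an equivalence arrow, not the unit.
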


\begin{proof}
The implication $(3)\Rightarrow (1)$ follows directly from Proposition \ref{p:eq-hom-eq-se}, and the implication $(1) \implies (2)$ follows directly from Proposition \ref{p:se=hse}. 

Thus, we are left with showing $(2) \implies (3)$. Let $(M,N,\Phi_M, \Phi_N)$ be a concrete homotopy shift equivalence of lag $m$ between $(c(V),X(A))$ and $(c(W),X(B))$. Apply Theorem \ref{thm:funct-cuntz} to get that $(\O_{X(A)}^0,\O_{X(A)}^1)$ and $(\O_{X(B)}^0,\O_{X(B)}^1)$ are homotopy shift equivalent via $[\O^0_{M,\Phi_M}, \widehat{\Phi}_M]$ and $[\O^0_{N,\Phi_N}, \widehat{\Phi}_N]$,  where all involved C*-correspondences are proper and full.

We stabilize all involved objects, $1$-arrows and $2$-arrows, and this will preserve homotopy shift equivalence, as well as properness and fullness of all involved C*-correspondences. We abuse notation and will not carry over the tensoring by compacts in our notation and arguments. For instance, we write $\O_{X(A)}^1$ for the correspondence $\O_{X(A)}^1 \otimes \mathbb{K}$, and $[\O^0_{M,\Phi_M}, \widehat{\Phi}_M]$ for the $1$-arrow $[\O^0_{M,\Phi_M}\otimes \mathbb{K}, \widehat{\Phi}_M \otimes \id_{\mathbb{K}}]$.

Now, as all C*-algebras involved are stable and $\sigma$-finite, and all correspondences involved are proper and full, by Lemma \ref{lem:induce-*hom} we have that $\O_{X(A)}^m \cong {}_{\alpha^m}\O_{X(A)}^0$, $\O_{X(B)}^m \cong {}_{\beta^m}\O_{X(B)}^0$ for some $*$-automorphisms $\alpha$ and $\beta$, and $\O^0_{M,\Phi_M} \cong {}_{f}\O_{X(B)}^0$, $\O^0_{N,\Phi_N} \cong {}_{g}\O_{X(A)}^0$ for some $*$-homomorphisms $f : \O^0_{X(A)} \rightarrow \O^0_{X(B)}$ and $g: \O^0_{X(B)} \rightarrow \O^0_{X(A)}$. Moreover, by Lemma \ref{l:hom-cor-iso} we also have that $\widehat{\Phi}_M = \Phi_u$ and $\widehat{\Phi}_N = \Phi_v$ for unitary multipliers $u$ of $\O^0_{X(B)}$ and $v$ of $\O^0_{X(A)}$. Hence, by Lemma \ref{lem:2-arr-calc} we have 
$$
[\O^0_{M,\Phi_M}\otimes \O^0_{N,\Phi_N}, \widehat{\Phi}_M \odot \widehat{\Phi}_N ] \cong [{}_{f}\O_{X(B)}^0 \otimes {}_{g}\O_{X(A)}^0, \Phi_u \odot \Phi_v] \cong [{}_{g \circ f}\O_{X(A)}^0, \Phi_{vg(u)}]
$$
$$[\O^0_{N,\Phi_N}\otimes \O^0_{M,\Phi_M} , \widehat{\Phi}_N \odot \widehat{\Phi}_M] \cong [{}_{g}\O_{X(A)}^0 \otimes {}_{f}\O_{X(B)}^0, \Phi_v \odot \Phi_u] \cong [{}_{f \circ g}\O_{X(B)}^0, \Phi_{f(v)u}].
$$ 
Due to homotopy shift equivalence via $[\O^0_{M,\Phi_M}, \widehat{\Phi}_M]$ and $[\O^0_{N,\Phi_N}, \widehat{\Phi}_N]$, by Propositions \ref{p:func-cros-prod} and \ref{p:homotopic-crossed-prod} we get that $(g\circ f)_{\rtimes vg(u)} = g_{\rtimes v}\circ f_{\rtimes u}$ and $(f\circ g)_{\rtimes f(v)u} = f_{\rtimes u}\circ g_{\rtimes v}$, as $*$-endomorphisms on $\O_{X(A)}^0 \rtimes_{\alpha} \mathbb{Z} \cong \O_{X(A)}$ and $\O_{X(B)}^0 \rtimes_{\beta} \mathbb{Z} \cong \O_{X(B)}$ respectively, are each equivariantly homotopic to $*$-automorphisms $\tau$ on $\O_{X(A)} \cong C^*(G_A)$ and $\rho$ on $\O_{X(B)} \cong C^*(G_B)$ respectively. Hence, $\tau^{-1} \circ g_{\rtimes v}$ is an equivariant homotopy left inverse of $f_{\rtimes u}$, and $g_{\rtimes v} \circ \rho^{-1}$ is an equivariant homotopy right inverse of $f_{\rtimes u}$ and they must therefore be homotopy equivalent by coincidence of left and right inverses in a monoid. Thus, we get that (the stabilizations of) $C^*(G_A)$ and $C^*(G_B)$ are equivariantly homotopy equivalent.
\end{proof}

Finally, we provide a sufficient condition for the existence of an equivariant $*$-isomorphism between $C^*(G_A) \otimes \mathbb{K}$ and $C^*(G_B) \otimes \mathbb{K}$ which is the C*-algebraic analogue of a recent result in Leavitt path algebras \cite[Theorem 6.7]{ART23}. The proof goes along the same lines as that of Theorem \ref{t:main}, but is somewhat simpler.

\begin{thm}
Let $A$ and $B$ be finite essential square matrices with entries in $\mathbb{N}$, over $V$ and $W$ respectively. Suppose that $X(A)$ and $X(B)$ are aligned shift equivalent. Then $C^*(G_A) \otimes \mathbb{K}$ and $C^*(G_B) \otimes \mathbb{K}$ are equivariantly $*$-isomorphic.
\end{thm}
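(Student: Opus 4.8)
The plan is to rerun the proof of Theorem~\ref{t:main} almost verbatim, replacing homotopy equivalences of $1$-arrows by honest $2$-arrows throughout. Because aligned shift equivalence supplies genuine unitary correspondence isomorphisms rather than mere homotopies, this will bypass Proposition~\ref{p:homotopic-crossed-prod} and all use of equivariant $K$-theory, and will produce an equivariant $*$-isomorphism directly. Concretely, I would first fix a concrete aligned shift of lag $m$ witnessing aligned shift equivalence of $X(A)$ and $X(B)$, giving $1$-arrows $[M,\Phi_M]:(c(V),X(A))\leftarrow(c(W),X(B))$ and $[N,\Phi_N]:(c(W),X(B))\leftarrow(c(V),X(A))$ together with $2$-arrows $[M\otimes N,\Phi_M\odot\Phi_N]\cong[X(A)^{\otimes m},1_{X(A)^{\otimes m+1}}]$ and $[N\otimes M,\Phi_N\odot\Phi_M]\cong[X(B)^{\otimes m},1_{X(B)^{\otimes m+1}}]$. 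Applying the aligned case of Theorem~\ref{thm:funct-cuntz} transports this to a concrete aligned shift of lag $m$ between $(\O_{X(A)}^0,\O_{X(A)}^1)$ and $(\O_{X(B)}^0,\O_{X(B)}^1)$ implemented by $[\O^0_{M,\Phi_M},\widehat{\Phi}_M]$ and $[\O^0_{N,\Phi_N},\widehat{\Phi}_N]$, with all correspondences full and proper. As in Theorem~\ref{t:main} I would then stabilise every object, $1$-arrow and $2$-arrow (suppressing $\otimes\mathbb{K}$ from the notation); this keeps the $2$-arrows, preserves fullness and properness, and makes all C*-algebras stable and $\sigma$-unital.

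Next I would pass to homomorphism correspondences as in Theorem~\ref{t:main}. By Lemma~\ref{lem:induce-*hom} and Lemma~\ref{l:hom-cor-iso} write $\O_{X(A)}^m\cong{}_{\alpha^m}\O_{X(A)}^0$ and $\O_{X(B)}^m\cong{}_{\beta^m}\O_{X(B)}^0$ for $*$-automorphisms $\alpha,\beta$, together with $\O^0_{M,\Phi_M}\cong{}_f\O_{X(B)}^0$, $\O^0_{N,\Phi_N}\cong{}_g\O_{X(A)}^0$, $\widehat{\Phi}_M=\Phi_u$, and $\widehat{\Phi}_N=\Phi_v$ for $*$-homomorphisms $f,g$ and unitary multipliers $u,v$. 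By Lemma~\ref{lem:2-arr-calc} the composites become $[{}_{g\circ f}\O_{X(A)}^0,\Phi_{vg(u)}]$ and $[{}_{f\circ g}\O_{X(B)}^0,\Phi_{f(v)u}]$; combining with the $2$-arrows of the aligned shift and with Lemma~\ref{lem:ident-obj-cuntz}, I obtain $2$-arrows $M_{w_0}:[{}_{g\circ f}\O_{X(A)}^0,\Phi_{vg(u)}]\to[{}_{\alpha^m}\O_{X(A)}^0,\Phi_w]$ and $M_{w_1}:[{}_{f\circ g}\O_{X(B)}^0,\Phi_{f(v)u}]\to[{}_{\beta^m}\O_{X(B)}^0,\Phi_{w'}]$ for some unitary multipliers $w$ of $\O_{X(A)}^0$ and $w'$ of $\O_{X(B)}^0$.

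To conclude, Proposition~\ref{p:func-cros-prod} gives $(g\circ f)_{\rtimes vg(u)}=g_{\rtimes v}\circ f_{\rtimes u}$, and Lemma~\ref{l:2-arr-inner} then shows $w_0$ conjugates this map to $(\alpha^m)_{\rtimes w}$. Since the restriction of $(\alpha^m)_{\rtimes w}$ to the fixed-point algebra $\O_{X(A)}^0$ is the $*$-automorphism $\alpha^m$, the map $(\alpha^m)_{\rtimes w}$, and hence $g_{\rtimes v}\circ f_{\rtimes u}$, is an equivariant $*$-automorphism of $\O_{X(A)}^0\rtimes_\alpha\mathbb{Z}\cong C^*(G_A)\otimes\mathbb{K}$; symmetrically $f_{\rtimes u}\circ g_{\rtimes v}$ is an equivariant $*$-automorphism of $C^*(G_B)\otimes\mathbb{K}$. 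Hence $f_{\rtimes u}$ possesses an equivariant left inverse and an equivariant right inverse, which must coincide, so $f_{\rtimes u}$ is the desired equivariant $*$-isomorphism $C^*(G_A)\otimes\mathbb{K}\to C^*(G_B)\otimes\mathbb{K}$.

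The only step that still requires care — and it is distinctly lighter than its analogue in Theorem~\ref{t:main}, since there is no homotopy-to-crossed-product passage and no $K$-theoretic input — is the bookkeeping in the second paragraph: one must check that, after rewriting everything in terms of homomorphism correspondences, the identity $1$-arrow $[\O_{X(A)}^m,1_{\O_{X(A)}^{m+1}}]$ really does become $[{}_{\alpha^m}\O_{X(A)}^0,\Phi_w]$ for some unitary multiplier $w$, compatibly with the choice of $\alpha$ realising $\O_{X(A)}^1\cong{}_\alpha\O_{X(A)}^0$ and with the composition isomorphism of Lemma~\ref{l:Psi-fg}, so that Lemma~\ref{l:2-arr-inner} applies as stated. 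Granting this, the monoid argument above closes the proof.
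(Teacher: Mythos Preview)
Your proposal is correct and follows essentially the same approach as the paper's proof: apply the aligned case of Theorem~\ref{thm:funct-cuntz}, stabilise, rewrite all correspondences as homomorphism correspondences via Lemmas~\ref{lem:induce-*hom} and~\ref{l:hom-cor-iso}, compute compositions via Lemma~\ref{lem:2-arr-calc}, and then use Lemma~\ref{l:2-arr-inner} together with Proposition~\ref{p:func-cros-prod} to conclude that $f_{\rtimes u}$ has two-sided inverse up to inner conjugacy and is therefore an equivariant $*$-isomorphism. Your bookkeeping around the unitary $w$ realising $[\O_{X(A)}^m,1_{\O_{X(A)}^{m+1}}]\cong[{}_{\alpha^m}\O_{X(A)}^0,\Phi_w]$ is slightly more explicit than the paper's (which simply invokes Lemma~\ref{l:hom-cor-iso} to get inner conjugacy of $g\circ f$ with $\alpha^m$ and then applies Lemma~\ref{l:2-arr-inner}), and your final monoid argument is spelled out rather than left as ``readily see'', but the substance is identical.
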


\begin{proof}
Let $(M,N,\Phi_M,\Phi_N,\Psi_{X(A)},\Psi_{X(B)})$ be a concrete aligned shift between $X(A)$ and $X(B)$ of lag $m$. By Theorem \ref{thm:funct-cuntz} we get that $(\mathcal{O}^0_{X(A)}, \mathcal{O}^1_{X(A)})$ and $(\mathcal{O}^0_{X(B)}, \mathcal{O}^1_{X(B)})$ are aligned shift equivalent with lag $m$ via the $1$-arrows $[\O^0_{M,\Phi_M},\widehat{\Phi_M}]$ and $[\O^0_{N,\Phi_N},\widehat{\Phi_N}]$ where $\O^0_{M,\Phi_M}$ and $\O^0_{N,\Phi_N}$ are still proper and full.

We stabilize all involved objects, $1$-arrows and $2$-arrows, and this will preserve aligned shift equivalence, as well as properness and fullness of all involved C*-correspondences. We abuse notation and will not carry over the tensoring by compacts in our notation and arguments. For instance, we write $\O_{X(A)}^1$ for the correspondence $\O_{X(A)}^1 \otimes \mathbb{K}$, and $[\O^0_{M,\Phi_M}, \widehat{\Phi}_M]$ for the $1$-arrow $[\O^0_{M,\Phi_M}\otimes \mathbb{K}, \widehat{\Phi}_M \otimes \id_{\mathbb{K}}]$.

Now, since all C*-algebras involved are stable and $\sigma$-finite, and all correspondences involved are proper and full, by Lemma \ref{lem:induce-*hom} we have that $\O_{X(A)}^m \cong {}_{\alpha^m}\O_{X(A)}^0$, $\O_{X(B)}^m \cong {}_{\beta^m}\O_{X(B)}^0$ for some $*$-automorphisms $\alpha$ and $\beta$, and $\O^0_{M,\Phi_M} \cong {}_{f}\O_{X(B)}^0$, $\O^0_{N,\Phi_N} \cong {}_{g}\O_{X(A)}^0$ for some $*$-homomorphisms $f : \O^0_{X(A)} \rightarrow \O^0_{X(B)}$ and $g: \O^0_{X(B)} \rightarrow \O^0_{X(A)}$. Moreover, by Lemma \ref{l:hom-cor-iso} we also have that $\widehat{\Phi}_M = \Phi_u$ and $\widehat{\Phi}_N = \Phi_v$ for unitary multipliers $u$ of $\O^0_{X(B)}$ and $v$ of $\O^0_{X(A)}$.

By our assumptions together with Lemma \ref{lem:2-arr-calc} we have 
$$[\O_{X(A)}^m,1_{\O_{X(A)}^{m+1}}]\cong [(\O_{X(A)}^1)^{\otimes m},1_{(\O_{X(A)}^1)^{\otimes m+1}}] \cong [\O^0_{M,\Phi_M}\otimes \O^0_{N,\Phi_N}, \widehat{\Phi}_M \odot \widehat{\Phi}_N ] \cong$$
$$[{}_{f}\O_{X(B)}^0 \otimes {}_{g}\O_{X(A)}^0, \Phi_u \odot \Phi_v] \cong [{}_{g \circ f}\O_{X(A)}^0, \Phi_{vg(u)}],
$$
$$[\O_{X(B)}^m,1_{\O_{X(B)}^{m+1}}] \cong  [(\O_{X(B)}^1)^{\otimes m},1_{(\O_{X(B)}^1)^{\otimes m+1}}] \cong [\O^0_{N,\Phi_N}\otimes \O^0_{M,\Phi_M} , \widehat{\Phi}_N \odot \widehat{\Phi}_M] \cong $$
$$[{}_{g}\O_{X(A)}^0 \otimes {}_{f}\O_{X(B)}^0, \Phi_v \odot \Phi_u] \cong [{}_{f \circ g}\O_{X(B)}^0, \Phi_{f(v)u}].
$$ 
In particular, by Lemma \ref{l:hom-cor-iso} we get that $g\circ f$ and $f\circ g$ are inner conjugate to $\alpha^m$ and $\beta^m$ respectively. By Lemma \ref{l:2-arr-inner} and Propositions \ref{p:func-cros-prod} we get that $(\alpha^m)_{\rtimes vg(u)}$ is unitarily equivalent to $(g\circ f)_{\rtimes vg(u)} = g_{\rtimes v}\circ f_{\rtimes u}$ and that $(\beta^m)_{\rtimes f(v)u}$ is unitarily equivalent to $(f\circ g)_{\rtimes f(v)u} = f_{\rtimes u}\circ g_{\rtimes v}$, as equivariant $*$-endomorphisms on $\O_{X(A)}^0 \rtimes_{\alpha} \mathbb{Z} \cong \O_{X(A)}$ and $\O_{X(B)}^0 \rtimes_{\beta} \mathbb{Z} \cong \O_{X(B)}$ respectively. Hence, since $(\alpha^m)_{\rtimes vg(u)}$ and $(\beta^m)_{\rtimes f(v)u}$ are $*$-automorphisms, we readily see that $f_{\rtimes u}$ is an equivariant $*$-isomorphism between (the stabilizations of) $C^*(G_A)$ and $C^*(G_B)$.
\end{proof}

Aligned shift equivalence of the C*-correspondences $X(A)$ and $X(B)$ is still a rather mysterious equivalence relation, and although it is implied by SSE and implies SE, we do not yet know how it is situated between them.

\vspace{6pt}

\textbf{Acknowledgments.}
The authors are grateful to Ralf Meyer for valuable conversations on bicategories conducted at University of G\"ottingen and at Mathematisches Forsch\-ungsinstitut Oberwolfach.

\end{document}